\documentclass[11pt]{amsart}
\usepackage[margin=1in]{geometry}
\usepackage{amsmath,amssymb,amsthm}
\usepackage{mathtools}
\usepackage[hypertexnames=false]{hyperref}
\allowdisplaybreaks

\newtheorem{theorem}{Theorem}[section]
\newtheorem{proposition}[theorem]{Proposition}
\newtheorem{lemma}[theorem]{Lemma}
\newtheorem{corollary}[theorem]{Corollary}
\newtheorem{hypothesis}[theorem]{Hypothesis}

\newtheorem*{theorem*}{Theorem}
\newtheorem*{remark*}{Remark}
\newtheorem*{notation*}{Notation}
\theoremstyle{remark}
\newtheorem{remark}[theorem]{Remark}

\newcommand{\Z}{\mathbb{Z}}
\newcommand{\R}{\mathbb{R}}
\newcommand{\N}{\mathbb{N}}
\newcommand{\C}{\mathbb{C}}
\newcommand{\eps}{\varepsilon}
\newcommand{\Kr}[2]{\genfrac{(}{)}{}{}{#1}{#2}}
\newcommand{\ind}{\mathbf 1}

\title[The large sieve for square moduli under Hooley's hypothesis $R^*$]{An improvement of the large sieve for square moduli under Hooley's hypothesis $R^{*}$ for short Sali\'e sums}
\author{Stephan Baier}
\address{Stephan Baier, Ramakrishna Mission Vivekananda Educational and Research Institute, Department of Mathematics, G.~T.\ Road, PO Belur Math, Howrah, West Bengal 711202, India}
\email{stephanbaier2017@gmail.com}
\subjclass[2020]{Primary 11N35; Secondary 11L05, 11L07, 11L40}
\keywords{Large sieve with square moduli, modular square roots, Sali\'e sums, quadratic Gauss sums, Hooley's hypothesis $R^*$}
\date{}

\begin{document}
\begin{abstract}
Let $S(Q,M,N,(a_n)):=\sum_{q\le Q}\sum_{(a,q)=1}|\sum_{M<n\le M+N}a_ne(an/q^2)|^2$ be Zhao's large sieve sum with square moduli. At the critical point $N=Q^3$ the best known unconditional bound, due to Baier and Zhao (2008), is $S\ll Q^{1/2+\eps}N\sum|a_n|^2$, against the conjectured $Q^{\eps}N\sum|a_n|^2$, and the exponent $\tfrac12$ has not been lowered since. We prove that, under Hooley's Hypothesis $R^*$ for short Sali\'e sums -- square-root cancellation for $\sum_{x_1<n\le x_2}\big(\tfrac nc\big)e_c(a\bar n+bn)$ over arbitrary subintervals of a period -- one has $S\ll Q^{1/2-1/134+\eps}N\sum|a_n|^2$ at $N=Q^3$. The key estimate is a bound for the number $P(\alpha)$ of fractions $a/q^2$, $q\le Q$, within $Q^{-3}$ of a point $\alpha$ near $b/r$: we show $P(b/r+z)\ll(Q^{2/3}r^{-1/3}+Q^{1/4})Q^\eps$ for every modulus $Q^{1/2+\eps}\le r\le Q^{3/2}$, improving the bound $Q^{9/16}r^{-1/8}$ obtained by Baier (2026) for $r=p,p^2$ only, and reaching every modulus. The proof rests on a single observation: a sum of modular square roots $\sum_{n\in J}e_r(a\sqrt{jn})$ over an interval $J$ is, after completion and an exact evaluation of quadratic Gauss sums at every modulus, $r^{-1/2}$ times a Sali\'e sum of length $r/|J|$. Hypothesis $R^*$ therefore yields square-root cancellation for these sums directly, at every modulus, without Weyl differencing; the saving over the trivial bound is the square of what the Weyl-differencing route gives. The Gauss-sum evaluations, including even moduli and coefficients sharing a factor with the modulus, are proved in full. The paper was prepared in collaboration with Claude (Anthropic); Section \ref{sec:contributions} sets out what each of us contributed.
\end{abstract}
\maketitle

\tableofcontents

\section{Introduction}\label{sec:intro}

\subsection{The large sieve inequality and its square-moduli variant}\label{sec:background}

The classical large sieve inequality bounds, for a finite sequence of complex numbers $(a_n)_{M<n\le M+N}$ and the set of Farey fractions $a/q$ with $q\le Q$, the quantity $\sum_{q\le Q}\sum_{(a,q)=1}|\sum_n a_n e(na/q)|^2$; the classical bound, optimal up to constants, is $\ll(Q^2+N)Z$, where $Z:=\sum_n|a_n|^2$. Zhao \cite{Zhao2004} introduced an arithmetically distinct variant, obtained by restricting the denominators to \emph{perfect squares}: for $Q,N\ge1$ and $M\in\R$, set
\[
S(Q,M,N,(a_n)):=\sum_{q\le Q}\ \sum_{\substack{a=1\\(a,q)=1}}^{q}\ \left|\sum_{M<n\le M+N}a_n\,e\!\left(\frac a{q^2}\,n\right)\right|^2 .
\]
This is not a special case, nor even an obvious variant, of the classical inequality: the points $a/q^2$ with $q\le Q$ are far more sparsely and irregularly distributed on $\R/\Z$ than the full set of Farey fractions of denominator $\le Q^2$, and the classical large sieve machinery -- at bottom a statement about how well a set of points can simultaneously approximate many real numbers -- does not control the square-moduli sum. Zhao conjectured
\[
S(Q,M,N,(a_n))\ \ll_\eps\ (QN)^\eps\big(Q^3+N\big)Z ,
\]
matching a naive heuristic count of the relevant points, and this remains open. The best known unconditional bound, due to Baier and Zhao \cite{BaierZhao2008}, is
\begin{equation}
S(Q,M,N,(a_n))\ \ll_\eps\ (QN)^\eps\left(Q^3+N+\min\{Q^2N^{1/2},\,Q^{1/2}N\}\right)Z .
\tag{1.1}
\end{equation}

The value $N=Q^3$ is the \emph{critical point} of the problem. It is exactly where the two easy regimes of (1.1) meet -- $N$ small, where the term $Q^3$ dominates and (1.1) already matches Zhao's conjecture, and $N$ very large, where $N$ itself dominates and (1.1) is again conjecturally sharp -- and it is exactly there that (1.1) is weakest relative to the conjecture. At $N=Q^3$ Zhao's conjecture predicts $S\ll_\eps Q^\eps NZ$, whereas (1.1) gives only
\[
S(Q,M,Q^3,(a_n))\ \ll_\eps\ Q^{1/2+\eps}NZ ,
\]
and \emph{no unconditional improvement of the exponent $1/2$ here has been found in the nearly two decades since \cite{BaierZhao2008}}. Replacing $1/2$ by $1/2-\eta$ for some fixed $\eta>0$, even conditionally on a reasonable and well-motivated hypothesis, is therefore a meaningful advance on a genuinely stuck problem, and it is the object of the circle of work to which this paper belongs.

\subsection{From the large sieve to counting fractions with square denominators}\label{sec:reduction1}

As is standard for large sieve inequalities, $S(Q,M,N,(a_n))$ is controlled by how well the points $a/q^2$ can simultaneously approximate a given real number. Following \cite{Baier2026a} (building on \cite{Baier2025,Baier2026b}) one has $S(Q,M,N,(a_n))\ll NZ\cdot\max_\alpha P(\alpha)$, where
\[
P(\alpha):=\#\left\{(q,a)\in\Z^2:\ 1\le q\le Q,\ (q,a)=1,\ \left|\frac a{q^2}-\alpha\right|\le\Delta\right\},\qquad \Delta:=\frac1N ,
\]
counts the square-denominator fractions within $\Delta$ of $\alpha$. The precise statement, \cite[Lemma 3]{Baier2026a}, is recorded as Lemma \ref{lem:lemma3} below; it reduces the problem to bounding $P(b/r+z)$ for $1\le r\le\sqrt N$, $(b,r)=1$, and $z$ in an explicit range depending on $r$.

The benchmark to beat is $P(\alpha)\ll Q^{1/2+\eps}$: that is what an elementary argument gives at $N=Q^3$ (Lemma \ref{lem:elementary} below), and it is exactly what produces the exponent $1/2$ of (1.1) at the critical point. The content of \cite{Baier2026a,Baier2025} is that when $\alpha$ is close to a fraction $b/r$ with $r$ not too small, $P(\alpha)$ is controlled by an \emph{exponential sum with modular square roots}, which admits cancellation and hence a power-saving bound. Establishing such a bound, for as wide a range of $r$ as possible, is the technical heart of the subject and of the present paper.

\subsection{Sums with modular square roots, and Hooley's Hypothesis $R^*$}\label{sec:reduction2}

We adopt the convention of \cite{Baier2026a}, used without further comment throughout. For $r\in\N$ and $t\in\Z$, a sum indexed by ``a modular square root $\sqrt t$ of $t$ modulo $r$'' means a sum over \emph{all} solutions $k\bmod r$ of $k^2\equiv t\ (r)$ -- there may be none, one, or several -- each counted once. In particular, for $l\in\Z$,
\begin{equation}
e_r\!\left(l\sqrt t\right):=\sum_{\substack{k\bmod r\\k^2\equiv t\ (r)}}e_r(lk),\qquad e_r(x):=e^{2\pi ix/r} ;
\tag{1.2}
\end{equation}
this is not the exponential of an irrational number but shorthand for the finite sum on the right, and it vanishes identically when $t$ is not a square modulo $r$. For an interval $I\subseteq[1,M]$, a differentiable $f:I\to\R$ and finite sequences $\boldsymbol\alpha=(\alpha_l)_{|l|\le L}$, $\boldsymbol\beta=(\beta_m)_{m\in I}$, define the \emph{bilinear sum with modular square roots}
\begin{equation}
\Sigma_f=\Sigma_f(r,j,L,M,\boldsymbol\alpha,\boldsymbol\beta):=\sum_{|l|\le L}\ \sum_{m\in I}\alpha_l\beta_m\,e_r\!\left(l\sqrt{jm}\right)e\big(lf(m)\big).
\tag{1.3}
\end{equation}
Via Dirichlet approximation and a Poisson-summation smoothing of the resulting count (\cite[Lemma 5]{Baier2026a}, recorded as Lemma \ref{lem:lemma5} below), $P(b/r+z)$ is controlled by $\Sigma_f$ for an explicit, mildly oscillating $f$ and explicit smooth coefficients. A power-saving bound for $\Sigma_f$, uniform over the relevant ranges, translates back into a non-trivial bound for $P(\alpha)$ and hence into an improvement of (1.1) at $N=Q^3$.

The trivial bound is $\Sigma_f\ll\|\boldsymbol\alpha\|_1\|\boldsymbol\beta\|_\infty Mr^\eps$, by the divisor-type bound $\sum_{m\le M}\rho_r(jm)\ll Mr^\eps$ on the number of modular square roots (Lemma \ref{lem:trivialSigma}); the whole difficulty is to beat it. Since $\Sigma_f$ is linear in $\boldsymbol\alpha$, and the coefficients that occur are $\ll1$, this amounts to beating the trivial bound $\ll Mr^\eps$ for the inner sum
\begin{equation}
T(l):=\sum_{m\in I}\beta_m\,e_r\!\left(l\sqrt{jm}\right)e\big(lf(m)\big)\qquad(l\ne0),
\tag{1.4}
\end{equation}
a sum of modular square roots against a smooth weight and a slowly oscillating phase. It is at this point that the arithmetic content enters, and there that unconditional methods stop short.

\subsubsection*{Why a hypothesis is needed}\label{sec:whyhyp}

A \emph{complete} exponential sum twisted by a multiplicative character modulo $c$ -- one running over a full period -- is understood essentially optimally: one has square-root cancellation, $\ll c^{1/2+\eps}$, unconditionally, by Weil's theorem and its relatives; for the Sali\'e sums $\sum_{n\bmod c}\Kr nc e_c(a\bar n+bn)$ relevant here this goes back to Sali\'e \cite{Salie1931}. The situation is different for a sum restricted to an interval of length $N<c$. Completing it into complete sums by finite Fourier analysis and applying Weil's bound gives the P\'olya--Vinogradov estimate $\ll c^{1/2+\eps}$, independent of $N$: this is non-trivial for $N>c^{1/2+\eps}$ and worthless for $N\le\sqrt c$, but in neither case is it \emph{square-root cancellation in the length of the sum}, $\ll N^{1/2+\eps}$, which is what the heuristic of random signs predicts and what a Riemann-hypothesis-type input would give. The gap between the two is the factor $(c/N)^{1/2}$, and for every $N\le c^{1-\delta}$ it is a power of $c$. Hooley's Hypothesis $R^*$ \cite{Hooley1978}, formulated originally for short Kloosterman sums, asserts that square-root cancellation in the length holds at every length; the Sali\'e-sum analogue introduced by Baier \cite[Hypothesis 18]{Baier2026a} (Hypothesis \ref{hyp:R*} below) is the hypothesis of this paper. In the application, the lengths $N$ at which it is used for the dominant part of the argument (the class $d=1$ of Section \ref{sec:classes}) satisfy $\sqrt c\ll N\ll c$ (Remark \ref{rem:usedwhere}), where the P\'olya--Vinogradov bound is non-trivial but falls short by exactly the factor the argument needs. Every result in this paper that goes beyond the trivial bound for (1.4) is conditional on Hypothesis \ref{hyp:R*}, and nothing else is assumed.

\subsubsection*{Why not a hypothesis about modular square roots directly?}\label{sec:whynotdirect}

Since the object to be bounded is the sum (1.4) of modular square roots, it is natural to ask why one does not simply assume square-root cancellation for such sums,
\begin{equation}
\sum_{n\in J}e_r\!\left(a\sqrt{jn}\right)\ \ll\ |J|^{1/2}r^\eps\qquad\text{for intervals }J\text{ and }(a,r)=1,
\tag{1.5}
\end{equation}
rather than for Sali\'e sums. The answer is that there is nothing to choose: (1.5) is a \emph{consequence} of Hypothesis $R^*$, and proving this, at every modulus, is the main step of the paper (Theorem \ref{thm:shortsqrt}). The mechanism is the following identity, which we state here for odd $r$ and $(a,r)=(j,r)=1$ and prove in general in Section \ref{sec:bilinear}. For an interval $J=\{n_0+1,\ldots,n_0+H_0\}$ with $H_0\le r$,
\begin{equation}
\sum_{n\in J}e_r\!\left(a\sqrt{jn}\right)=\frac{\theta}{\sqrt r}\sum_{b\bmod r}\Kr br\,e_r\!\Big({-}\bar4\,ja^2\,\bar b-n_0b\Big)\,\omega(b),\qquad \omega(b):=\sum_{n=1}^{H_0}e_r(-bn),
\tag{1.6}
\end{equation}
where $|\theta|=1$ depends only on $r$ and $j$, and $\bar4$, $\bar b$ are inverses modulo $r$. The proof is two lines: detect $n\equiv\bar\jmath k^2\ (r)$, $n\in J$, by additive characters, and evaluate the resulting quadratic Gauss sum $\sum_ke_r(b\bar\jmath k^2+ak)$ by completing the square. The weight $\omega(b)$ is a Dirichlet kernel, of size $H_0$ for $|b|\ll r/H_0$ and $\ll r/|b|$ beyond; so the right-hand side of (1.6) is $r^{-1/2}$ times a Sali\'e sum, with the linear phase $-n_0b$, of effective length $r/H_0$ and weight $H_0$. Under Hypothesis $R^*$ that sum is $\ll H_0\,(r/H_0)^{1/2}r^\eps$, and (1.6) gives (1.5). A short sum of modular square roots at the modulus $r$ \emph{is} a Sali\'e sum at the modulus $r$, of the dual length $r/H_0$; square-root cancellation in the one is square-root cancellation in the other.

Given this, the hypothesis should be formulated for Sali\'e sums, for several reasons. Hooley's hypothesis concerns exponential sums whose phase is a rational function twisted by a character -- objects with a century of history, the machinery of Weil and Deligne behind their complete case, a proved complete-sum case to extrapolate from, a well-understood Kloosterman analogue \cite{Hooley1978}, a close relation to the Friedlander--Iwaniec conjecture on $n^2\theta\bmod1$ \cite{FriedlanderIwaniec1987}, and a function-field analogue recently \emph{proved} by Sawin and Shusterman \cite{SawinShusterman2025} for squarefree moduli. The sums (1.5) have none of this: the set $\{k\bmod r:\ \bar\jmath k^2\bmod r\in J\}$ over which they run is the preimage of an interval under the squaring map, not an algebraic set; their complete case $\sum_{k\bmod r}e_r(ak)=r\,\ind[r\mid a]$ carries no information; and the only structure they have is the one (1.6) exhibits, which leads back to Sali\'e sums. Moreover the duality $H_0\leftrightarrow r/H_0$ works in our favour: the lengths $H_0$ that occur in the application are below $\sqrt r$, where a direct hypothesis (1.5) would be a statement about which unconditional methods say nothing at all, whereas the corresponding Sali\'e sums have length $r/H_0>\sqrt r$, where the P\'olya--Vinogradov bound already gives a non-trivial estimate and Hypothesis $R^*$ asks only that the factor $H_0^{1/2}$ by which the P\'olya--Vinogradov bound $r^{1/2}$ exceeds the conjectured $(r/H_0)^{1/2}$ be recovered. Formulating the hypothesis for Sali\'e sums is thus both the natural and the weaker-looking choice, and it is the one for which independent evidence exists.

One more point deserves to be made explicit, because it is where the present paper departs from \cite{Baier2026a}. The factor $e(lf(m))$ in (1.4) oscillates over the range of $m$, and one might think that it forces one to square the sum -- Weyl differencing, which is how \cite{Baier2026a} proceeds -- before any arithmetic input can be brought to bear. It does not. The phase $lf(m)$ varies by at most $1$ on any interval of length $H_0\le1/(LF)$, where $F$ bounds $|f'|$; so partial summation on blocks of that length reduces (1.4) to sums (1.5) over blocks, at no loss beyond a constant, and Hypothesis $R^*$ then applies through (1.6) to each block. Squaring would halve the exponent of every saving (Remark \ref{rem:weylcompare}), and this is why the bounds below are the square, in the sense of saving, of those of \cite{Baier2026a}.

\begin{remark}[The point $r=Q$, and the unconditional regime]\label{rem:unconditional}
The only unconditional bounds for $P(b/r+z)$ in this paper are those for small moduli $r\le Q^{31/54}$ quoted from \cite{Baier2026b} in Section \ref{sec:explicit-eta}; the argument of Sections \ref{sec:bilinear}--\ref{sec:largesieve} itself gives nothing unconditional beyond the trivial bound, and it is worth saying exactly why, because the answer also explains what Weyl differencing is for. In the application (Section \ref{sec:largesieve}) the variable $m$ runs over a range of length $M\asymp Q^{1/2-\gamma}$, and the phase $e(lf(m))$ has $|lf'|\le LF$ with $F\asymp Q^{1/2+\gamma}/r$. Unconditionally, the identity (1.6) -- or (4.2) with $|G(b\bar\jmath,a,r)|\le\sqrt{2r}$ from Corollary \ref{cor:vanishgeneral} and $\sum_{b\bmod r}|\Lambda_J(b;r)|\ll r\log r$ -- gives only the P\'olya--Vinogradov bound $\sum_{n\in J}e_r(a\sqrt{jn})\ll r^{1/2}\log r$ for $(a,r)=1$, which beats the trivial bound $|J|$ precisely when $|J|>\sqrt r$. The direct route removes the phase by \emph{shortening} the range to blocks of length $H_0\le1/(LF)\le1/F\asymp rQ^{-1/2-\gamma}$, and $rQ^{-1/2-\gamma}<\sqrt r$ exactly when $r<Q^{1+2\gamma}$; so below $r\approx Q$ the blocks are shorter than $\sqrt r$ even though the whole range is not, and the direct route yields nothing without Hypothesis \ref{hyp:R*}.

The point $r=Q$ is where $M$ crosses $\sqrt r$ (at $\gamma=0$). For $r\le Q^{1-\eps}$ one has $M\ge r^{1/2+\eps/2}$: the untwisted sum over the \emph{whole} range is in the P\'olya--Vinogradov range, and the only obstacle is the phase. Weyl differencing removes the phase without shortening the long variable: after squaring and averaging over shifts, $m_1$ still runs over the full range of length $M$ and only the difference $m_1-m_2$ is confined to length $H$, so completing the long variable produces Gauss sums for which Weil's bound is non-trivial. This is what makes the range $Q^{1/2+\eps}\le r\le Q^{1-\eps}$ accessible unconditionally, and it is the mechanism behind \cite[Theorem 4(i)--(ii)]{Baier2026a}, where a power saving over $Q^{1/2}$ is obtained for odd squarefree $r$ and for $r=p^2$ with no hypothesis; the saving in $H$ there is $H^{-1/2}$, the same exponent as in (1.10), so in that regime squaring costs nothing. What confined those results to special moduli was the evaluation of the Gauss sums at general moduli, and that is now supplied by Section \ref{sec:gausstoolkit} for every modulus; carrying the unconditional argument of \cite{Baier2026a} through it, for all $r\le Q^{1-\eps}$, is a natural next step, and we intend to return to it elsewhere. For $r>Q^{1+2\gamma}$, on the other hand, $M<\sqrt r$: every sum in sight is shorter than $\sqrt r$, no unconditional method is known to us, and this is where Hypothesis \ref{hyp:R*} enters. The two routes are thus complementary, and the present paper is about the second regime.
\end{remark}

\subsection{The results being generalized}\label{sec:prior}

We record the two results of \cite{Baier2026a} that this paper generalizes and sharpens, deferring precise definitions (Kronecker symbol, Sali\'e sums) to Section \ref{sec:prelim}.

\begin{theorem*}[{\cite[Theorem 6(iii)]{Baier2026a}}]
Suppose $r,j\in\N$, $(r,j)=1$, $r^\eps\le L,M\le r^{1-\eps}$, $|f'|\le F\le L^{-1}$ on $[1,M]$, and $H\in\N$ satisfies $r^\eps\le H\le\min\{1/(LF),M\}$. Under Hooley's Hypothesis $R^*$ for short Sali\'e sums, if $r=p$ is prime,
\[
\Sigma_f\ll \big(H^{-1/4}L^{1/2}M+M^{1/2}r^{1/4}+M\big)\|\boldsymbol\alpha\|_2\|\boldsymbol\beta\|_\infty r^\eps ,
\]
and a corresponding (weaker) bound holds if $r=p^2$.
\end{theorem*}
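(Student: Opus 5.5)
The plan is to take the Weyl-differencing route of \cite{Baier2026a}, which is how the statement is proved there. Hypothesis \ref{hyp:R*} enters only after the phase $e(lf(m))$ has been removed by differencing. Cauchy--Schwarz in $l$ gives
\[
|\Sigma_f|^2\le\|\boldsymbol\alpha\|_2^2\sum_{|l|\le L}|T(l)|^2 ,
\]
with $T(l)$ as in (1.4), so it suffices to bound $\sum_l|T(l)|^2$. For each $l$ I would apply the van der Corput shift: write $T(l)$ as $H^{-1}\sum_{h\le H}\sum_m$ of the shifted summand, at an edge cost $O(H r^\eps)$, and apply Cauchy--Schwarz in $m$. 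This gives
\[
|T(l)|^2\ll \frac{M}{H}\sum_{|h|<H}\Big(1-\frac{|h|}{H}\Big)\Big|\sum_{m}\beta_{m+h}\overline{\beta_m}\,e_p\big(l\sqrt{j(m+h)}\big)\overline{e_p\big(l\sqrt{jm}\big)}\,e\big(l(f(m+h)-f(m))\big)\Big| .
\]
The twist $l(f(m+h)-f(m))$ has size at most $LFH\le1$. To avoid using any hypothesis on $f''$, I would not try to remove it pointwise. Instead I would write $e(x)$ as a rapidly convergent Taylor series in $x$, at the cost of $r^\eps$, which leaves sums whose coefficients depend only weakly on $m$. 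The term $h=0$ contributes $\ll M^2H^{-1}r^\eps$ for each $l$. Summed over $l$ and square-rooted twice, this is the source of the first term $H^{-1/4}L^{1/2}M$; it also forces the second Cauchy--Schwarz mentioned below.

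For $h\ne0$ the inner sum runs over pairs $(k,k')$ modulo $p$ with $k^2\equiv jm$ and $k'^2\equiv j(m+h)$, weighted by $e_p(l(k'-k))$. I would complete the sum in $m$ over the interval of length $M$ modulo $p$, using additive characters to detect $m\bmod p$. Since $k'^2-k^2\equiv jh$, one can parametrise $k'-k=u$, $k'+k=jh\bar u$ with $u\in(\Z/p)^\times$ ($p$ odd; $p=2$ is trivial). The completed sum then becomes, for each frequency $b$, a sum over $u$ of $e_p(lu+b\bar 4(u+jh\bar u)^2\bar j)$ against the Dirichlet kernel of the $m$-interval. Expanding the square produces $e_p$ of $\bar4\bar j b(u^2+j^2h^2\bar u^2)$ plus a constant. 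After the substitution $v=u^2$ and an exact Gauss-sum evaluation, which is legitimate for prime $p$, this is a Sali\'e sum in the variable $v$ with character $\Kr vp$, of length about $p/M$ in $b$.

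Applying Hypothesis \ref{hyp:R*} to the resulting short Sali\'e sums, rather than the Weil bound, is the key input. I expect it to bound the off-diagonal contribution for fixed $l,h$ by about $(pM)^{1/2}p^\eps$. Summing over $h$ and $l$ and inserting into the display above, a second Cauchy--Schwarz (over $h$, or equivalently squaring once more to separate the $\boldsymbol\beta$-weights) should give
\[
\sum_l|T(l)|^2\ll\Big(\frac{LM^2}{H}+ \ldots\Big)^{1/2}\ldots ,
\]
which I expect to yield the terms $H^{-1/4}L^{1/2}M$ and $M^{1/2}p^{1/4}$ after taking roots. The term $M$ absorbs the edge error and the $b=0$ frequency of the completion.

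The main obstacle is the completion step. Once the Taylor-expanded twist and the coefficients $\beta_{m+h}\overline{\beta_m}$ are present, the $m$-sum no longer has a clean Dirichlet kernel. To handle this I would first split $m$ into blocks of length $\asymp M/p^\eps$ and use partial summation on each block, keeping the coefficients $\ll1$. After that the difficulty lies in tracking exactly which Sali\'e sum appears and in checking that its length lies in the range where Hypothesis \ref{hyp:R*} is formulated. For $r=p^2$ the same scheme applies, but the Gauss sums degenerate when $p\mid b$; this is what produces the weaker bound in that case.
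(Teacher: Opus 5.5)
\paragraph{Main gap: cancellation in $m$ for fixed $(l,h)$.} The paper does not prove this statement. It quotes it from \cite{Baier2026a} and only sketches the route in Remark~\ref{rem:weylcompare}:
\begin{itemize}
\item Cauchy--Schwarz, then averaging over shifts $|h|\le H$;
\item reduction to a \emph{count} of modular square roots;
\item Poisson summation, leading to products of two quadratic Gauss sums, which are then evaluated;
\item one more Poisson summation, producing complete Sali\'e sums times short ones, to which Hypothesis~\ref{hyp:R*} is applied.
\end{itemize}
Your opening moves fit this. But you then fix $l$ and $h$ and look for cancellation in the $m$-sum, and that cannot work for the statement as given.

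Only $\|\boldsymbol\beta\|_\infty$ is assumed, so $\beta_{m+h}\overline{\beta_m}$ is an arbitrary bounded sequence. There is nothing to complete in $m$, and partial summation on blocks needs bounded variation; this is exactly why Theorem~\ref{thm:sigma} carries $\operatorname{Var}(\boldsymbol\beta)$. In fact, for a single pair $(l,h)$ you can choose unimodular $\beta_m$ recursively along each residue class modulo $h$ so that every term $\beta_{m+h}\overline{\beta_m}X_m$ of your inner sum equals $|X_m|$. The inner sum is then $\sum_m|X_m|$, so there is no cancellation to find for fixed $(l,h)$.

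The Taylor expansion does not tame the twist either. From $|f'|\le F$ alone, $l(f(m+h)-f(m))$ can have total variation of order $LFM$ (up to $M/H$) over the $m$-range. Blocks of length $M/p^\eps$ do not change that.

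\paragraph{The missing idea: sum over $l$ first.} After Cauchy--Schwarz in $l$, the variable $l$ carries no coefficients, so it should be summed \emph{first}. Majorize the indicator of $|l|\le L$ by a smooth $w(l/L)\ge0$, open the square after the shift, and sum over $l$. The sum $\sum_l w(l/L)e(l\theta)$, with $\theta=(k_1-k_2)/p+f(m+h_1)-f(m+h_2)$, is negligible unless $\theta$ lies within $p^\eps/L$ of an integer. Since $|f(m+h_1)-f(m+h_2)|\le FH\le 1/L$, this forces $k_1-k_2\equiv u\pmod p$ with $|u|\ll p^{1+\eps}/L$, whatever $f$ is.

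Now take absolute values in $m$; this is where $\|\boldsymbol\beta\|_\infty$ and $H\le 1/(LF)$ enter. You are left with counting $(m,k_1,k_2)$ with $k_i^2\equiv j(m+h_i)$ and $k_1-k_2$ in that short window. This is the kind of count Remark~\ref{rem:weylcompare} refers to. Detect the $m$-interval and the window by additive characters (frequencies $b,c$), and $k_1^2-k_2^2\equiv j(h_1-h_2)$ by one more (frequency $t$). The $(k_1,k_2)$-sum then factors, up to sign conventions, as $G(b\bar\jmath+t,c,p)\,G(-t,-c,p)$.

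\paragraph{Two steps that fail even for smooth $\boldsymbol\beta$.} First, the Sali\'e identification. For fixed $l\ne0$ your completed sum is, up to a unimodular factor, $\sum_u e_p(Cu^2+lu+Cj^2h^2\bar u^2)$ with $C=\bar4\bar\jmath b$. The odd term $lu$ is not a function of $u^2$, so the substitution $v=u^2$ is unavailable and this is not a Sali\'e sum. In the counting route the linear phase is the frequency $c$ dual to the short window. Sali\'e sums appear only after the two Gauss sums are evaluated and one more Poisson summation is made.

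Second, the numerology does not close. Since $M\le p^{1-\eps}$, the off-diagonal bound $(pM)^{1/2}p^\eps$ you expect is \emph{larger} than the trivial bound $\ll M$. Inserted into your display it gives $\sum_l|T(l)|^2\ll(LM^2H^{-1}+LM^{3/2}p^{1/2})p^\eps$, hence
\[
\Sigma_f\ll\|\boldsymbol\alpha\|_2\big(L^{1/2}MH^{-1/2}+L^{1/2}M^{3/4}p^{1/4}\big)p^\eps .
\]
The second term exceeds the trivial bound $L^{1/2}M\|\boldsymbol\alpha\|_2$ and every term of the target. Also, square-rooting $LM^2H^{-1}$ twice gives $L^{1/4}M^{1/2}H^{-1/4}$, not $H^{-1/4}L^{1/2}M$. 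So the unspecified ``second Cauchy--Schwarz'' would have to carry the whole argument.

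\paragraph{Where your per-$l$ approach does work.} Your per-$l$ approach is exactly the paper's own method, and it works without any squaring when $\boldsymbol\beta$ is regular. Since $(l,p)=1$ for $0<|l|\le L<p$ and $\sum_{0<|l|\le L}|\alpha_l|\le(2L)^{1/2}\|\boldsymbol\alpha\|_2$, Theorem~\ref{thm:sigma} with $H_0=H$ gives
\[
\Sigma_f\ll\big(L^{1/2}MH^{-1/2}+M\big)\|\boldsymbol\alpha\|_2\big(\|\boldsymbol\beta\|_\infty+\operatorname{Var}(\boldsymbol\beta)\big)p^\eps ,
\]
which is stronger than the statement. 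It is only for arbitrary bounded $\boldsymbol\beta$ that the squaring-and-counting route is needed.
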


\begin{theorem*}[{\cite[Theorem 4(iii)]{Baier2026a}}]
Let $N=Q^3$, $Q^{1/2+\eps}\le r\le Q^{3/2}$, $(b,r)=1$ and $\Delta\le z\le\Delta^{1/2}r^{-1}$. Under the same hypothesis, if $r=p$ or $r=p^2$, then $P(b/r+z)\ll Q^{9/16+\eps}r^{-1/8}$.
\end{theorem*}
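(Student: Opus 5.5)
The plan is to combine the reduction of $P(b/r+z)$ to bilinear sums with modular square roots (Lemma \ref{lem:lemma5}, i.e.\ \cite[Lemma 5]{Baier2026a}) with the conditional bound for $\Sigma_f$ in \cite[Theorem 6(iii)]{Baier2026a} quoted above, and then to optimise the free parameters.

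\textbf{Step 1: reduction to $\Sigma_f$.} Write $\alpha=b/r+z$ with $\Delta\le z\le\Delta^{1/2}r^{-1}$ and $N=Q^3$. Apply Lemma \ref{lem:lemma5}: by Dirichlet approximation and Poisson-summation smoothing it bounds $P(\alpha)$ by a main term plus a normalised sum $\Sigma_f(r,j,L,M,\boldsymbol\alpha,\boldsymbol\beta)$. Here $m$ runs over a range of length $M\asymp Q^{1/2-\gamma}$, where $\gamma$ is determined by the size of $z$ relative to $\Delta$. The phase satisfies $|f'|\le F\asymp Q^{1/2+\gamma}/r$, the dual variable has length $L$ (a free parameter), and the coefficients satisfy $\|\boldsymbol\beta\|_\infty\ll1$ and $\|\boldsymbol\alpha\|_2\ll L^{1/2}$. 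Schematically this gives
\[
P(\alpha)\ \ll\ \frac{M}{L}\,Q^{\eps}\;+\;\frac{1}{L}\,|\Sigma_f|\;+\;Q^\eps,
\]
with the first term coming from the zero frequency. The exact normalisation has to be read off from Lemma \ref{lem:lemma5}.

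\textbf{Step 2: insert Theorem 6(iii).} For $r=p$, take $H=\lfloor 1/(LF)\rfloor$, which is admissible provided $r^\eps\le H\le M$. Then
\[
\Sigma_f\ll\big((LF)^{1/4}L^{1/2}M+M^{1/2}r^{1/4}+M\big)L^{1/2}r^\eps .
\]
For $r=p^2$, use the weaker variant in the same way; its extra loss should be absorbed because $r^{-1/8}$ is the only $r$-dependence claimed.

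\textbf{Step 3: optimise $L$ and $\gamma$.} Substitute into Step 1 and choose $L$ to balance the main term $M/L$ against the dominant term $(LF)^{1/4}M$. This gives $L^{5/4}\asymp F^{-1/4}$, up to the exact normalisation of Step 1, so the bound becomes a power of $F$ times $M$. Then substitute $M\asymp Q^{1/2-\gamma}$ and $F\asymp Q^{1/2+\gamma}/r$ and take the worst case over admissible $\gamma$. The target is to arrange exponents summing to $Q^{9/16}r^{-1/8}$. I would also check that:
\begin{itemize}
\item the secondary terms $M^{1/2}r^{1/4}L^{1/2}/L$ and $M$ are dominated throughout $Q^{1/2+\eps}\le r\le Q^{3/2}$;
\item the side conditions $r^\eps\le L,M\le r^{1-\eps}$ and $F\le L^{-1}$ hold there.
\end{itemize}

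\textbf{Main obstacle.} The hardest part is Step 3 at the edges of the ranges. For small $\gamma$ and $r$ near $Q^{1/2}$ the condition $H\ge r^\eps$ can fail. For $r$ near $Q^{3/2}$ the requirement $M\le r^{1-\eps}$ and the size of $M^{1/2}r^{1/4}$ need care. I would first try to handle these boundary regimes with the elementary bound $P\ll Q^{1/2+\eps}$ (Lemma \ref{lem:elementary}), which already lies below the target whenever $Q^{9/16}r^{-1/8}\ge Q^{1/2}$, i.e.\ for $r\le Q^{1/2}$. Failing that, I would split the $z$-range dyadically and treat the extreme dyadic pieces separately.
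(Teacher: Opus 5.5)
This statement is only quoted in the paper, from \cite{Baier2026a}; it is not proved there. Under the same hypothesis it follows at once, for every $r$, from Theorem \ref{thm:P}, because $\max\{\tfrac23-\tfrac x3,\tfrac14\}\le\tfrac9{16}-\tfrac x8$ for $\tfrac12\le x\le\tfrac32$. Your outline is the natural way to derive it from the Weyl-differencing bound: Lemma \ref{lem:lemma5}, then \cite[Theorem 6(iii)]{Baier2026a}, then optimising $L$ and using the elementary bound (5.3) for large $\gamma$. But Step 3 as you set it up does not give the target.

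\textbf{The gap: you balance the wrong terms.} In \cite[Theorem 6(iii)]{Baier2026a} every term, including the diagonal $M$, is multiplied by $\|\boldsymbol\alpha\|_2\asymp L^{1/2}$. After the normalisation $Q^{\eps}/L$ of (5.5) this gives
\[
P\Big(\frac br+z\Big)\ \ll\ 1+Q^{\eps}\Big(M(LF)^{1/4}+ML^{-1/2}+M^{1/2}r^{1/4}L^{-1/2}\Big),
\]
so the zero-frequency term $M/L$ is swamped by $ML^{-1/2}$. A diagonal of size $|\alpha_0|M$, which yields $M/L$, is what Theorem \ref{thm:sigma} gives (Remark \ref{rem:sharper}), not Theorem 6(iii).

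Your balance of $M/L$ against $M(LF)^{1/4}$ gives $L=F^{-1/5}$. At $\gamma=0$ and $r=Q^x$ this fails in two ways:
\begin{itemize}
\item $ML^{-1/2}=Q^{11/20-x/10}$, which exceeds $Q^{9/16-x/8}$ by $Q^{(2x-1)/80}$ for every $x>\tfrac12$;
\item $M^{1/2}r^{1/4}L^{-1/2}=Q^{3/10+3x/20}$, which exceeds the target for $x>\tfrac{21}{22}$ and even the trivial $Q^{1/2}$ for $x>\tfrac43$.
\end{itemize}
So your planned check that the secondary terms are dominated fails, and the choice of $L$ has to be redone.

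\textbf{A choice that works.} Take $L\asymp r^{1/2}Q^{-1/4}$, i.e.\ $\delta\asymp Q^{5/4}r^{1/2}$. Then $1/(LF)=r^{1/2}Q^{-1/4-\gamma}\le M$. The three terms become $Q^{9/16-3\gamma/4}r^{-1/8}$, $Q^{5/8-\gamma}r^{-1/4}$ and $Q^{3/8-\gamma/2}$, all $\le Q^{9/16}r^{-1/8}$ for $Q^{1/2}\le r\le Q^{3/2}$. The side conditions are handled as follows.
\begin{itemize}
\item $LF\le1$ and $H\ge r^\eps$ need $\gamma<\tfrac x2-\tfrac14$ (up to $\eps$). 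Otherwise (5.3) gives $Q^{1/2-\gamma}\le Q^{3/4-x/2}\le Q^{9/16-x/8}$.
\item The lower bound in (2.5) needs $x\le\tfrac32-2\gamma$. When it fails, take $\delta=Q^{1/2+\gamma}r$. This gives $\ll Q^{3/4-\gamma}r^{-1/4}+Q^{1/4}+r^{1/4}\ll r^{1/4}\le Q^{9/16}r^{-1/8}$, since then $x>1$ and $\gamma>\tfrac34-\tfrac x2$.
\item At $r\approx Q^{3/2}$, $\delta$ needs a $Q^{\eps}$ adjustment.
\end{itemize}

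\textbf{The case $r=p^2$.} Saying the extra loss ``should be absorbed'' is not an argument. You need the actual form of the weaker $p^2$ bound in \cite{Baier2026a}, which this paper does not reproduce, and you must redo the optimisation with it.
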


Both are restricted to $r=p,p^2$ because their proofs evaluate certain quadratic Gauss sums, and a further mixed multiplicative-additive exponential sum, by case-specific arguments (\cite[\S4.1--4.2, \S4.4]{Baier2026a}) that become increasingly intricate away from prime and prime-square moduli. \cite{Baier2026a} flags the general case as future work: \emph{``it may be possible to prove (9) for all $r\le Q^{3/2}$ under Hooley's hypothesis $R^*$ for short Sali\'e sums, thus achieving another conditional improvement of Theorem 2.''} We take up this question.

\subsection{Main results}\label{sec:mainresults}

Our route is the one described in Section \ref{sec:reduction2}: the sums of modular square roots are completed, the resulting quadratic Gauss sums are evaluated exactly at every modulus, and Hypothesis $R^*$ is applied to the Sali\'e sums that come out. Nothing is squared. The consequences for the large sieve are as follows.

\begin{theorem}[Main theorem]\label{cor:sieve}
Assume Hooley's Hypothesis $R^*$ for short Sali\'e sums (Hypothesis \ref{hyp:R*} below). Then for every $\eps>0$, with $N=Q^3$,
\begin{equation}
S\!\left(Q,M,Q^3,(a_n)\right)\ \ll_\eps\ Q^{1/2-1/134+\eps}\,NZ .
\tag{1.7}
\end{equation}
More precisely, $P(\alpha)\ll Q^{33/67+\eps}=Q^{1/2-1/134+\eps}$ uniformly over the ranges of Lemma \ref{lem:lemma3} at $N=Q^3$.
\end{theorem}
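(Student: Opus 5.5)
The bound (1.7) follows from $S\ll NZ\max_\alpha P(\alpha)$ (Lemma \ref{lem:lemma3}), so the plan is to show $P(b/r+z)\ll Q^{33/67+\eps}$ uniformly over the ranges of $r$ and $z$ in that lemma at $N=Q^3$, $\Delta=Q^{-3}$. I will split the moduli into three regimes. For small moduli $r\le Q^{31/54}$ I will quote the unconditional bounds of \cite{Baier2026b}; these give a power saving over $Q^{1/2}$ for such $r$, and the resulting exponent will need to be checked against $33/67$. For $z$ outside the range $\Delta\le z\le\Delta^{1/2}r^{-1}$, or for $r$ near the upper end $Q^{3/2}$, I will use the elementary Lemma \ref{lem:elementary} and the range restrictions of Lemma \ref{lem:lemma3}. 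The main work is the middle range $Q^{1/2+\eps}\le r\le Q^{3/2}$. There the key estimate announced in the abstract, $P(b/r+z)\ll(Q^{2/3}r^{-1/3}+Q^{1/4})Q^\eps$, is $\le Q^{1/2-\eta}$ as soon as $r\ge Q^{1/2+3\eta}$.

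To prove the key estimate, I will use Lemma \ref{lem:lemma5} to reduce $P(b/r+z)$ to a main term plus $\Sigma_f$ with $M\asymp Q^{1/2-\gamma}$, $F\asymp Q^{1/2+\gamma}/r$, and a free parameter $L$. I will then bound $\Sigma_f$ directly, without Weyl differencing:
\begin{itemize}
\item Cut $I$ into blocks of length $H_0\asymp\min(M,1/(LF))$ and remove $e(lf(m))$ and the smooth $\beta_m$ by partial summation.
\item On each block, apply the exact identity (1.6), in its general form from Section \ref{sec:bilinear} (all moduli, even $r$, common factors $d=(l,r)$ handled by the Gauss-sum toolkit of Section \ref{sec:gausstoolkit}). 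This writes the block sum as $r^{-1/2}$ times a Sali\'e sum of dual length $r/H_0$ weighted by a Dirichlet kernel.
\item Dyadically decompose the kernel and apply Hypothesis \ref{hyp:R*} on each dyadic piece, giving $\ll (H_0)^{1/2}r^\eps$ per block.
\end{itemize}
This yields $T(l)\ll (M/H_0)H_0^{1/2}r^\eps=MH_0^{-1/2}r^\eps$, and hence $\Sigma_f\ll L\,MH_0^{-1/2}r^\eps$. I will then optimise $L$ against the main term of Lemma \ref{lem:lemma5}, which is of size roughly $M/L$ times the density, and choose $\gamma$ to balance the terms; this should produce $Q^{2/3}r^{-1/3}+Q^{1/4}$.

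Finally I will take the maximum over all regimes. The conditional bound at the lower cutoff $r=Q^{31/54}$ is $Q^{2/3-31/162}=Q^{77/162}$, which is below $Q^{1/2}$. So the binding constraint is where the unconditional small-$r$ bound of \cite{Baier2026b} and the leftover cases (the $z$ ranges not covered, and the classes $d>1$ of Section \ref{sec:classes} where the effective modulus $r/d$ shrinks) meet. I expect $33/67$ to come out of equating two linear exponents in $\log_Q r$ at such a crossover.

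The main obstacle will be the uniformity in the classes $d=(l,r)>1$ and with even $r$. There the Gauss sum can vanish or change size, and the Sali\'e sum lives at modulus $r/d$ with the hypothesis applied at shorter length. I would first try summing over $d\mid r$ with the trivial count of $l$ divisible by $d$, checking that the loss $d^{1/2}$ is compensated by the $1/d$ density of such $l$.
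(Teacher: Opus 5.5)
Your architecture matches the paper's: reduce by Lemma \ref{lem:lemma3}, use Theorem \ref{thm:P} for $r\ge Q^{1/2+\eps}$ (your sketch of it follows Sections \ref{sec:bilinear}--\ref{sec:largesieve}), and quote \cite{Baier2026b} for $r\le Q^{31/54}$. The gap is in the final step, which is the actual content of the theorem. You never compute the exponent, and your guess about where $\tfrac{33}{67}$ comes from is wrong, because neither of your ``leftover cases'' exists.

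The classes $d=(l,r)>1$ cost only the factor $(l,r)$ in Theorem \ref{thm:shortsqrt}; the trivial regime $H_0>r/d$ gives $d^{1/2}\le(l,r)$. Since $\sum_{0<|l|\le L}(l,r)\ll Lr^\eps$ by Proposition \ref{prop:gcd}, which is exactly the compensation you describe in your last paragraph, these classes lose only $r^\eps$. Likewise, every $z$ with $\Delta\le z\le\Delta^{1/2}/r$, i.e.\ every $\gamma\ge0$ with $x+\gamma\le\tfrac32$, is covered by Theorem \ref{thm:P}.

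The saving $\tfrac1{134}$ is fixed entirely by the unconditional side, and you need its explicit shape. \cite{Baier2026b} supplies two different estimates. The first, (5.16), covers $r\le Q^{15/67}$ with non-decreasing exponent $\max\{\tfrac37+\tfrac{2x}7,\tfrac38+\tfrac x4,\tfrac{13}{56}+\tfrac{9x}{28},3x-2,0\}$. The second, (5.15), covers $Q^{15/67}<r\le Q^{31/54}$ with exponent $\max\{\tfrac12-\tfrac x{30},\tfrac{13x}5-1,0\}$. They meet at $x=\tfrac{15}{67}$, where both equal $\tfrac{33}{67}$, and that is the global maximum.

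The proof is then the range-by-range check of Section \ref{sec:explicit-eta}:
\begin{itemize}
\item (5.16) on $[0,\tfrac{15}{67}]$;
\item (5.15) on $[\tfrac{15}{67},\tfrac59]$, where it equals $\tfrac12-\tfrac x{30}$ and decreases; it must be used at least up to $x=\tfrac{35}{67}$, since $\tfrac23-\tfrac x3>\tfrac{33}{67}$ below that;
\item Theorem \ref{thm:P} on $[\tfrac59,\tfrac32]$, where its exponent is at most $\tfrac{13}{27}$.
\end{itemize}
Theorem \ref{thm:P} never binds.

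Your sketch of Theorem \ref{thm:P} also has three steps that fail as written.
\begin{itemize}
\item $\gamma$ is not yours to choose. It is fixed by $z$ through (2.4), and the bound must be uniform in it. The paper uses (5.3) when $\gamma\ge\tfrac12-E(x)$, and otherwise chooses $L$ (i.e.\ $\delta$) as a function of $(x,\gamma)$.
\item Lemma \ref{lem:elementary} is useless for $r$ near $Q^{3/2}$. There (2.6) forces $\gamma\le\tfrac32-x$, so (5.3) gives $Q^{1/2-\gamma}\ge Q^{x-1}$, which is no saving at $x=\tfrac32$. That range is handled by the constraint $\delta\le Q^2$ in (2.5), which you never mention. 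For $x+\gamma/2>\tfrac54$ the crossing value $L=F^{-1/3}$ is inadmissible, so the paper takes $\delta=Q^2$, i.e.\ $L=Q^{\eps_1-1}r$. This is what produces the floor $Q^{1/4}$.
\item Applying Hypothesis \ref{hyp:R*} to a whole dyadic range $|b|\sim X\ge m/H_0$ and then summing by parts loses the total variation of $\omega_{m,H_0}$ on that range. Because the kernel oscillates with period $m/H_0$, that variation is $\asymp H_0$, not $m/X$. Summed over $X$ this gives $m^{1/2}H_0$, and after the factor $m^{-1/2}$ only the trivial bound $H_0$. You must instead apply the hypothesis on intervals of length $m/H_0$, as in Proposition \ref{prop:kernelsum}. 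Alternatively, first write $\omega(b)=\big(e_m(-b)-e_m(-(H_0+1)b)\big)/\big(1-e_m(-b)\big)$ and absorb the two linear phases into $B$ before decomposing dyadically.
\end{itemize}
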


Theorem \ref{cor:sieve} is the improvement of the large sieve for square moduli at its critical point, under Hypothesis $R^*$, that \cite{Baier2026a} identified as the goal, having obtained the bound for $P$ that it requires at prime and prime-square denominators only. The exponent $\tfrac12$ in (1.1) at $N=Q^3$ has resisted every unconditional attack since \cite{BaierZhao2008}; Theorem \ref{cor:sieve} lowers it, under one hypothesis about short character sums whose complete-sum case is a theorem and whose function-field analogue has been proved \cite{SawinShusterman2025}. It is very slightly stronger than \cite[Corollary 21]{Baier2026b}, which obtains $Q^{1/2-1/135+\eps}$ from two hypotheses on additive energies of modular square roots (\cite[Hypotheses 6 and 7]{Baier2026b}); the present result rests on $R^*$ alone. The size of the saving, $\tfrac1{134}$, is dictated by the unconditional treatment of \emph{very small} moduli $r\approx Q^{15/67}$ in \cite[\S10]{Baier2026b}, to which this paper adds nothing (Remark \ref{rem:etabottleneck}); any improvement there improves $\tfrac1{134}$ at once. What this paper supplies is the whole range $r\ge Q^{1/2+\eps}$, for every arithmetic type of $r$, where before only $r=p$ and $r=p^2$ could be handled, and it supplies it with a large margin:

\begin{theorem}\label{thm:P}
Let $N=Q^3$ and $\Delta=1/N$. Under Hooley's Hypothesis $R^*$ for short Sali\'e sums, for every $r\in\N$ with $Q^{1/2+\eps}\le r\le Q^{3/2}$, every $b$ with $(b,r)=1$, and every $z$ with $\Delta\le z\le\Delta^{1/2}r^{-1}$,
\begin{equation}
P\!\left(\frac br+z\right)\ \ll\ \Big(Q^{2/3}\,r^{-1/3}+Q^{1/4}\Big)Q^{\eps}.
\tag{1.8}
\end{equation}
\end{theorem}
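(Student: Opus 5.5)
We follow the route of \cite{Baier2026a}, with the Weyl-differencing step replaced by the direct block argument of Section~\ref{sec:reduction2}.

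\textbf{Step 1: reduction to $\Sigma_f$.} Apply Lemma~\ref{lem:lemma5} (Dirichlet approximation followed by Poisson smoothing). This bounds $P(b/r+z)$ by a main term plus the contribution of a bilinear sum $\Sigma_f(r,j,L,M,\boldsymbol\alpha,\boldsymbol\beta)$. Here $M\asymp Q^{1/2-\gamma}$ is the length of the $m$-range, $L$ is the dual length coming from Poisson summation, $|f'|\le F\asymp Q^{1/2+\gamma}/r$, and the coefficients are $\ll1$. In the paper's terms, the reduction reads
\[
P\ll Q^\eps\Big(\frac{M}{L}+\frac{|\Sigma_f|}{L}\Big),
\]
up to the precise normalisation in Lemma~\ref{lem:lemma5}. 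The parameter $\gamma$ is determined by $z$, and we keep it free.

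\textbf{Step 2: bound $T(l)$ for each $l\ne0$, with no squaring.} Split $I$ into blocks $J$ of length $H_0\asymp\min\{M,1/(LF)\}$. On each block the phase $e(lf(m))$ varies by $O(1)$, and $\beta_m$ is smooth, so partial summation reduces $T(l)$ to $M/H_0$ sums of the shape $\sum_{n\in J'}e_r(l\sqrt{jn})$, with $J'\subseteq J$.

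For each such sum, use the completion identity (1.6), in its general form proved in Section~\ref{sec:bilinear}. That form holds for all $r$, including even $r$ and $(l,r)>1$, and relies on the exact Gauss-sum evaluations of Section~\ref{sec:gausstoolkit}. The identity writes the block sum as $r^{-1/2}$ times a Sali\'e sum weighted by a Dirichlet kernel.

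Decompose $\omega(b)$ dyadically in $|b|$ and apply Hypothesis~\ref{hyp:R*} on each dyadic range. This gives
\[
\sum_{n\in J'}e_r\big(l\sqrt{jn}\big)\ll H_0^{1/2}r^\eps.
\]
When $d=(l,r)>1$, pass to the reduced modulus $r/d$ and treat these classes separately; the dominant class is $d=1$. Summing over blocks gives
\[
T(l)\ll \frac{M}{H_0^{1/2}}\,r^\eps+M^{1/2}r^{\eps}\quad\text{(trivially when $H_0$ is large)},
\]
and hence
\[
\Sigma_f\ll L\,M\,H_0^{-1/2}\,r^\eps,
\]
plus boundary terms.

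\textbf{Step 3: insert the parameters and optimise.} Take $H_0\asymp 1/(LF)\asymp rL^{-1}Q^{-1/2-\gamma}$, capped at $M$. Substituting into Step 1 gives
\[
P\ll \Big(\frac{M}{L}+M\big(LQ^{1/2+\gamma}/r\big)^{1/2}\Big)Q^\eps.
\]
Balancing in $L$ gives $L^{3/2}\asymp (r/Q^{1/2+\gamma})^{1/2}$, so $L\asymp (r/Q^{1/2+\gamma})^{1/3}$. Then
\[
P\ll M L^{-1}Q^{\eps}\asymp Q^{1/2-\gamma}\,Q^{(1/2+\gamma)/3}\,r^{-1/3}\,Q^{\eps}.
\]
This is largest at $\gamma=0$, where it equals $Q^{2/3}r^{-1/3}Q^{\eps}$, matching the first term of (1.8).

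The term $Q^{1/4}$ arises from the regime where the cap $H_0=M$ binds, or where $r$ is so large that the trivial term $M^{1/2}$ dominates. In that regime,
\[
P\ll M^{1/2}\ll Q^{1/4}.
\]
For $z$ near the end of the range, I would also use the elementary Lemma~\ref{lem:elementary} to handle the endpoints.

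\textbf{Main obstacle.} The delicate point is making Step 2 uniform over all $r$. Even moduli and the classes with $(l,r)>1$ or $(j,r)>1$ change the Gauss-sum factor $\theta$ and the Sali\'e modulus. One has to check that Hypothesis~\ref{hyp:R*}, applied at the reduced modulus $r/d$, still saves $H_0^{1/2}$ once the density $d$ of admissible $l$ is accounted for. I would first try to bound the class-$d$ contribution by $d^{1/2}$ times the $d=1$ bound at modulus $r/d$, and then sum over $d\mid r$ using the divisor bound.
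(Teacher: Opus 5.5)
\textbf{There is a genuine gap in Step 3: you treat $L$ as a free parameter.} Lemma \ref{lem:lemma5} only permits $\delta$ in the range (2.5), $Q^{1/2+\gamma}r\le\delta\le Q^2$. Since $L=Q^{1+\eps_1}r/\delta$, this confines $L$ to $Q^{\eps_1-1}r\le L\le Q^{1/2-\gamma+\eps_1}$.

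Your balancing value $L=(r/Q^{1/2+\gamma})^{1/3}$ corresponds to $\delta=Q^{7/6+\gamma/3+\eps_1}r^{2/3}$. Writing $r=Q^x$, the condition $\delta\le Q^2$ becomes $x+\gamma/2\le\frac54-\frac32\eps_1$. So for every $r>Q^{5/4}$ your choice of $L$ is not allowed, whatever $\gamma$ is. That is exactly the range where (1.8) asserts $Q^{1/4}$. Your account of the $Q^{1/4}$ term through $M^{1/2}$ would be correct if $L$ were unconstrained, but as written it proves nothing there.

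What is needed is the paper's Case 3: take $\delta=Q^2$, i.e.\ $L=Q^{\eps_1-1}r$.
\begin{itemize}
\item Then $LF\asymp Q^{\gamma-1/2+\eps_1}\le1$ and $ML^{1/2}F^{1/2}\ll Q^{1/4-\gamma/2+\eps_1/2}$.
\item The decreasing term $M/L\ll Q^{3/2-x-\gamma-\eps_1}$ is no longer balanced against anything, so it must be checked separately. It is $<Q^{1/4-\gamma/2+\eps_1/2}$ precisely because $x+\gamma/2>\frac54-\frac32\eps_1$.
\end{itemize}
This endpoint choice, together with that check, is where the $Q^{1/4}$ in (1.8) comes from.

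\textbf{The large-$\gamma$ end needs a precise case split.} An appeal to Lemma \ref{lem:elementary} ``at the endpoints'' is not enough. At your balancing value $LF\asymp Q^{-\frac23(x-\frac12-\gamma)}$. Theorem \ref{thm:sigma} needs $H_0\ge1$ (i.e.\ $LF\le1$) and $L\ge1$, so it is unavailable whenever $\gamma\ge x-\frac12$. For $r$ just above $Q^{1/2}$ this is essentially the whole $z$-range, not a neighbourhood of an endpoint.

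The paper splits at $\gamma=\frac12-E(x)$, where $E(x)=\max\{\frac23-\frac x3,\frac14\}$.
\begin{itemize}
\item Above the split, (5.3) gives $Q^{1/2-\gamma}\le Q^{E(x)}$ directly.
\item Below it, $\gamma<\frac14$ and $x-\frac12-\gamma>\frac23(x-\frac12)\ge\frac23\eps$. These are exactly what give $LF\le1$, $M_0\ge1$, and the lower constraint $\delta\ge Q^{1/2+\gamma}r$ (i.e.\ $x+2\gamma\le2+3\eps_1$).
\end{itemize}
With these two cases added, your Step 3 computation is the paper's Case 2.

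\textbf{Step 2 can simply be replaced by citing Theorem \ref{thm:sigma}.} As sketched, it has two inaccuracies.
\begin{itemize}
\item The per-block bound must carry the factor $(l,r)$, as in Theorem \ref{thm:shortsqrt}. For $r\mid l$ the block sum is $\sum_{n\in J}\rho_r(jn)$, which equals $H_0$ on average over blocks. That factor is then averaged over $l$ by Proposition \ref{prop:gcd}. The classes are $d=(b,r)$ for the dual variable $b$, with only $d\mid(l,r)$ surviving; they are not indexed by $d=(l,r)$. Also, $(j,r)=1$ holds automatically.
\item A dyadic decomposition of $\omega$ with partial summation loses everything. The kernel $\omega$ oscillates on the scale $m/H_0$, so its total variation on a dyadic range $[B,2B]\subseteq[m/H_0,m/2]$ is of order $H_0$ rather than $m/B$. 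The result is the trivial bound. You need blocks of length $m/H_0$, as in Proposition \ref{prop:kernelsum}. Alternatively, first rewrite $\omega$ by the geometric series so that its oscillating factors $e_m(-b)$ and $e_m(-(H_0+1)b)$ go into the linear phase.
\end{itemize}
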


Writing $r=Q^x$, the exponent in (1.8) is $\max\{\tfrac23-\tfrac x3,\ \tfrac14\}$: it decreases from $\tfrac12$ at $x=\tfrac12$ to $\tfrac14$ at $x=\tfrac54$ and is constant beyond. The exponent $\tfrac9{16}-\tfrac x8$ of \cite[Theorem 4(iii)]{Baier2026a} saves $\tfrac18(x-\tfrac12)$ over the benchmark $\tfrac12$; (1.8) saves $\tfrac13(x-\tfrac12)$, and at $x=\tfrac54$ the gain over \cite{Baier2026a} is $Q^{5/32}$. Remark \ref{rem:optimal} shows that the exponent is the exact optimum of the method. Like \cite[Theorem 4(iii)]{Baier2026a}, the bound is uniform in $z$: no parameter describing the position of $\alpha$ relative to $b/r$ survives into the statement.

Theorem \ref{thm:P} rests on two estimates. The first is the one announced in Section \ref{sec:reduction2}.

\begin{theorem}[Short sums of modular square roots]\label{thm:shortsqrt}
Assume Hypothesis \ref{hyp:R*}. Let $r\in\N$, $(j,r)=1$, $a\in\Z$, and let $J$ be a set of $H_0$ consecutive integers with $1\le H_0\le r$. Then
\begin{equation}
\sum_{n\in J}e_r\!\left(a\sqrt{jn}\right)\ \ll\ H_0^{1/2}\,(a,r)\,r^\eps .
\tag{1.9}
\end{equation}
\end{theorem}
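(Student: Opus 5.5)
We follow the mechanism sketched around (1.6). The plan has three stages: complete the sum, evaluate the resulting Gauss sums, and apply Hypothesis $R^*$ to the dual sums.

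\emph{Completion.} Write $J=\{n_0+1,\dots,n_0+H_0\}$. Since $(j,r)=1$, the condition $n\equiv\bar\jmath k^2\ (r)$ for $k\bmod r$ is detected by additive characters modulo $r$. This gives
\[
\sum_{n\in J}e_r\!\left(a\sqrt{jn}\right)=\frac1r\sum_{b\bmod r}\Big(\sum_{k\bmod r}e_r(b\bar\jmath k^2+ak)\Big)\sum_{n\in J}e_r(-bn).
\]
Because $H_0\le r$, each residue is hit at most once, so this identity is exact. The inner $n$-sum equals $e_r(-bn_0)\,\omega(b)$, with $|\omega(b)|\ll\min\{H_0,\,r/\|b\|\}$, where $\|b\|$ is the distance from $b$ to the nearest multiple of $r$.

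\emph{Gauss sums.} Next evaluate $G(b\bar\jmath,a,r)=\sum_{k}e_r(b\bar\jmath k^2+ak)$ at every modulus, using the toolkit of Section \ref{sec:gausstoolkit}. Put $g=(b,r)$. The sum vanishes unless $g\mid a$ (up to a bounded factor at $2$). When it does not vanish, $g$ factors out, leaving a primitive sum modulo $r/g$. Completing the square there gives $\sqrt{g r}$ times a unit $\theta$, a Kronecker symbol $\big(\tfrac{b/g}{r/g}\big)$ (suitably modified at $2$), and the phase $e_{r/g}\big(-\bar4\,j(a/g)^2\,\overline{b/g}\big)$. Since $g\mid(a,r)$, the number of divisors $g$ that can occur is $\ll r^\eps$.

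\emph{Dual sums.} For each such $g$, substitute $b=gb'$ with $b'$ running modulo $r'=r/g$. The contribution is then
\[
\frac{\sqrt{g}}{\sqrt r}\sum_{b'\bmod r'}\Big(\frac{b'}{r'}\Big)e_{r'}\!\big(-\bar4 j a'^2\,\overline{b'}-n_0b'\big)\,\omega(gb').
\]
This is a Sali\'e sum modulo $r'$ weighted by a Dirichlet kernel. To handle the weight, decompose $b'$ dyadically by the size of $\|gb'\|$. On the range $\|gb'\|\le r/H_0$ the kernel is roughly constant, of size $H_0$. Summation by parts, using that $\omega$ has bounded variation on dyadic blocks relative to its size, reduces each block to incomplete Sali\'e sums over intervals of length $B$. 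Hypothesis \ref{hyp:R*} bounds these by $B^{1/2}r^\eps$. Summing over blocks, the contribution is $\ll H_0\,(r/(gH_0))^{1/2}$ from the small-$b'$ range plus $\sum_{B\text{ dyadic}} (r/B)\,B^{1/2}\ll r(r/H_0)^{-1/2}$ from the tail; note $B$ here counts values of $b'$ and must be rescaled by $g$ in the kernel bound. Multiplying by $\sqrt{g/r}$ gives $\ll H_0^{1/2}\,r^\eps$ times a power of $g$ at most $g^{1/2}\le(a,r)$. Summing over the $\ll r^\eps$ admissible $g$ then yields (1.9). The degenerate term $g=r$ (that is, $b\equiv0$) contributes $r^{-1}\cdot G(0,a,r)\cdot H_0$, which is nonzero only if $r\mid a$. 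In that case it is $\le H_0\le H_0^{1/2}r^{1/2}\le H_0^{1/2}(a,r)$.

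\emph{Main obstacle.} The hardest step is the exact Gauss-sum evaluation at general moduli, in particular $r$ even and $(b,r)>1$. Here the $2$-part forces parity conditions on $a/g$ and modifies the character, and one must check that the resulting sum is still a genuine Sali\'e sum (with Kronecker symbol) to which Hypothesis \ref{hyp:R*} applies. I would first split $r=2^\nu r_1$ by the Chinese remainder theorem, treat the odd part as above, and evaluate the $2$-power part explicitly. At that point the $2$-part contributes only a bounded phase depending on $b\bmod 8$, so the sum over $b'$ splits into $O(1)$ residue classes modulo $8$, each again a Sali\'e sum over an arithmetic progression. Such sums are covered by Hypothesis \ref{hyp:R*} after a linear change of variable, provided it is formulated for arbitrary intervals and linear phases.
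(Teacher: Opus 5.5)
Your architecture (completion, grouping by $g=(b,r)$, exact Gauss-sum evaluation on each class, then Hypothesis~\ref{hyp:R*} against the Dirichlet kernel) is the paper's. However, the step that is supposed to produce the saving fails as you state it.

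In the variable $b'$ at the modulus $m=r/g$ one has, for $b'\not\equiv0\ (m)$,
$\omega(gb')=\omega_{m,H_0}(b')=\big(e_m(-b')-e_m(-(H_0+1)b')\big)/\big(1-e_m(-b')\big)$.
On $B<|b'|\le2B$ the denominator has size $\asymp B/m$, while the factor $e_m(-(H_0+1)b')$ turns by the angle $2\pi(H_0+1)/m$ at each unit step. So, for $H_0\le m/2$ say and $B$ a large multiple of $m/H_0$, consecutive values of $\omega$ differ by $\asymp H_0/B$. The total variation of $\omega$ on the dyadic block is therefore $\asymp H_0$, not $\ll m/B$: the kernel does \emph{not} have bounded variation on dyadic blocks relative to its size.

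Partial summation on such a block gives only $(m/B+H_0)B^{1/2}m^\eps(A,m)^{1/2}$. Summed over dyadic $B\le m/2$ this is $H_0m^{1/2+\eps}(A,m)^{1/2}$, and after the factor $m^{-1/2}$ you are back at the trivial bound $H_0$. Your tail estimate $\sum_B(r/B)B^{1/2}$ is unjustified exactly where the $H_0^{1/2}$ saving has to be produced.

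Two repairs work.
\begin{itemize}
\item Cut the $b'$-range into blocks of length $P=m/H_0$, the oscillation scale of the kernel, as the paper does (Lemma~\ref{lem:kernel}, Proposition~\ref{prop:kernelsum}). On the $k$-th block $\sup|\omega|+\int|\omega'|\ll H_0/(1+|k|)$, and the hypothesis bounds every partial sum there by $P^{1/2}m^\eps(A,m)^{1/2}$. Summing over $|k|\ll H_0$ gives $(mH_0)^{1/2}m^{\eps}(A,m)^{1/2}\log(H_0+2)$.
\item Use the factorization above. Absorb the two linear phases $e_m(-b')$ and $e_m(-(H_0+1)b')$ into the linear term of Hypothesis~\ref{hyp:R*}, which allows any integer coefficient. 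Only then sum by parts on dyadic blocks against $(1-e_m(-b'))^{-1}$, whose variation there really is $\ll m/B$. Together with $|\omega|\le H_0$ and $|\omega'|\ll H_0^2/m$ on $|b'|\le m/H_0$, your computation then goes through.
\end{itemize}

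Two smaller points also need correcting.

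First, Hypothesis~\ref{hyp:R*} gives $B^{1/2}m^\eps(A,m)^{1/2}$ with $A=-\bar4\,j\,(a/g)^2$, not $B^{1/2}r^\eps$. The gcd $(A,m)$ can be large: for $r=p^2$, $a=p$, $g=1$ one has $A\equiv0\ (p^2)$. The factor $(a,r)$ in (1.9) comes from $(A,m)^{1/2}\le(a/g,m)\le(a,r)$, as in (4.3), not from a power of $g$.

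Second, at the prime $2$ the substitution $b'=8s+t$ does not turn a Sali\'e sum over a progression into one over an interval, since neither $\Kr{b'}m$ nor $\overline{b'}$ transforms compatibly with it. What works is the paper's route. The $2$-adic evaluation shows that the multiplicative factor is a combination of $\Kr\cdot m\psi$ with $\psi\in\{\mathbf1,\chi_{-4},\chi_8\}$ (Lemma~\ref{lem:thetachars}). For $\psi\ne\mathbf1$ one writes $\Kr\cdot m\psi=\psi\chi_8\Kr\cdot{8m}$, expands $\psi\chi_8$ in additive characters modulo $8$, and applies the hypothesis at the modulus $8m$ (Lemma~\ref{lem:shortsalie}). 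That same $2$-adic factor, $\vartheta_1=\chi_8$ when $m\equiv2\ (4)$, is also what makes your sum $\sum_{b'\bmod r'}\Kr{b'}{r'}\cdots$ meaningful at all, since $\Kr\cdot{r'}$ by itself is not $r'$-periodic for such $r'$.
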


For $r$ an odd prime and $(a,r)=1$ this is (1.6) together with Hypothesis $R^*$. The content of the theorem is that it holds at \emph{every} modulus, with the factor $(a,r)$ as the only price: at a general modulus the completion produces Gauss sums $G(b\bar\jmath,a,r)$ whose leading coefficient shares the factor $d=(b,r)$ with the modulus, and the exact evaluation of such sums (Section \ref{sec:gausstoolkit}) shows that the terms with $(b,r)=d$ form a Sali\'e-type sum at the modulus $r/d$, with the Kronecker symbol multiplied by one of the three characters $\mathbf1,\chi_{-4},\chi_8$ modulo $8$ when $r/d$ is even. Hypothesis $R^*$, which is stated for the Kronecker symbol alone, still suffices (Lemma \ref{lem:shortsalie}).

The second estimate converts Theorem \ref{thm:shortsqrt} into a bound for the bilinear sum (1.3), by partial summation on blocks. Here $\operatorname{Var}(\boldsymbol\beta):=\sum_{m\in\Z}|\beta_{m+1}-\beta_m|$, with $\beta_m:=0$ for $m\notin I$.

\begin{theorem}\label{thm:sigma}
Let $r\in\N$, $(j,r)=1$, $L\ge1$ and $1\le M\le r$. Let $I\subseteq[1,M]$ be an interval, $f:I\to\R$ differentiable with $|f'|\le F$ on $I$, and $H_0\in\N$ with $H_0\le\min\{1/(LF),\,M\}$. Then, under Hooley's Hypothesis $R^*$ for short Sali\'e sums,
\begin{equation}
\begin{aligned}
\Sigma_f\ &\ll\ r^\eps\big(\|\boldsymbol\beta\|_\infty+\operatorname{Var}(\boldsymbol\beta)\big)\Big(|\alpha_0|\,M+\frac M{H_0^{1/2}}\sum_{0<|l|\le L}|\alpha_l|\,(l,r)\Big)\\
&\ll\ r^{\eps}\,\|\boldsymbol\alpha\|_\infty\big(\|\boldsymbol\beta\|_\infty+\operatorname{Var}(\boldsymbol\beta)\big)\Big(M+\frac{LM}{H_0^{1/2}}\Big).
\end{aligned}
\tag{1.10}
\end{equation}
\end{theorem}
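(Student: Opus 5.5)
We deduce Theorem \ref{thm:sigma} from Theorem \ref{thm:shortsqrt} by partial summation on blocks. Since $\Sigma_f=\sum_{|l|\le L}\alpha_l T(l)$ with $T(l)$ as in (1.4), it suffices to prove
\[
T(0)\ll r^\eps\big(\|\boldsymbol\beta\|_\infty+\operatorname{Var}(\boldsymbol\beta)\big)M,\qquad
T(l)\ll r^\eps\big(\|\boldsymbol\beta\|_\infty+\operatorname{Var}(\boldsymbol\beta)\big)\frac{M}{H_0^{1/2}}\,(l,r)\quad(0<|l|\le L).
\]
The second line of (1.10) then follows from $(l,r)\le|l|\le L$. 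Strictly, this gives $\sum_{0<|l|\le L}(l,r)\ll L\log L$ rather than $L$, but the extra factor is harmless. It could be removed by $\sum_{l\le L}(l,r)\le\sum_{d\mid r}d\cdot L/d\ll Lr^\eps$, and this is the bound we use.

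For $l=0$ the phase $e_r(0\cdot\sqrt{jm})=\rho_r(jm)$ simply counts square roots. The trivial divisor-type bound $\sum_{m\le M}\rho_r(jm)\ll Mr^\eps$ (Lemma \ref{lem:trivialSigma}) together with $|\beta_m|\le\|\boldsymbol\beta\|_\infty$ settles this case.

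For $l\neq0$ we cut $I$ into $\ll M/H_0+1\ll M/H_0$ consecutive blocks $B$ of length at most $H_0$; this uses $H_0\le M$. On a block $B$ we put $g(m):=\beta_m e(lf(m))$ and $A(x):=\sum_{m\in B,\,m\le x}e_r(l\sqrt{jm})$. Abel summation then gives
\[
\sum_{m\in B}g(m)e_r(l\sqrt{jm})\ \ll\ \max_{x}|A(x)|\Big(|g(\max B)|+\sum_{m,m+1\in B}|g(m+1)-g(m)|\Big).
\]
Each partial sum $A(x)$ is a sum over a set of at most $H_0\le r$ consecutive integers. Theorem \ref{thm:shortsqrt}, applied with $a=l$ (monotonicity in the length is not needed; we apply it to each initial segment), therefore gives $A(x)\ll H_0^{1/2}(l,r)r^\eps$. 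For the variation we write
\[
|g(m+1)-g(m)|\le|\beta_{m+1}-\beta_m|+\|\boldsymbol\beta\|_\infty\,|e(lf(m+1))-e(lf(m))|,
\]
and the last factor is $\le2\pi|l|F\le2\pi LF\le2\pi/H_0$ by the mean value theorem; this step needs $f$ differentiable on $[m,m+1]\subseteq I$. Summed over the at most $H_0$ steps of the block, the phase contributes $O(\|\boldsymbol\beta\|_\infty)$. The total variation of $\boldsymbol\beta$ restricted to $B$ we simply keep as $\operatorname{Var}_B(\boldsymbol\beta)$.

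Summing over blocks, the block contributions are
\[
\ll H_0^{1/2}(l,r)r^\eps\sum_B\big(\|\boldsymbol\beta\|_\infty+\operatorname{Var}_B(\boldsymbol\beta)\big)\ \ll\ H_0^{1/2}(l,r)r^\eps\Big(\frac M{H_0}\|\boldsymbol\beta\|_\infty+\operatorname{Var}(\boldsymbol\beta)\Big).
\]
The term $\frac{M}{H_0}\|\boldsymbol\beta\|_\infty$ is of the required size $MH_0^{-1/2}\cdot H_0^{1/2}/H_0^{1/2}$. The total variation term gives only $H_0^{1/2}\operatorname{Var}(\boldsymbol\beta)$, and $H_0^{1/2}\le M/H_0^{1/2}$ because $H_0\le M$, so it is also dominated. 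This yields the claimed bound for $T(l)$.

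The only genuinely deep input is Theorem \ref{thm:shortsqrt}; everything else is bookkeeping. The point needing care is that the partial-summation error must not lose $H_0$. This is guaranteed because $H_0\le 1/(LF)$ makes the phase vary by $O(1)$ per block, and because the variation of $\boldsymbol\beta$ is counted globally rather than per block. We also have to check that the Theorem \ref{thm:shortsqrt} bound is uniform over all the initial segments used in the Abel summation, which it is, since it holds for every length $\le r$.
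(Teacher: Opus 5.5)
Your proposal is correct and follows the paper's proof essentially step for step: the trivial bound via Lemma \ref{lem:trivialSigma} for $l=0$; blocks of length $H_0$ with Abel summation; Theorem \ref{thm:shortsqrt} applied to the initial segments; the phase variation controlled by $|l|FH_0\le1$; and $H_0^{1/2}\le M/H_0^{1/2}$ to absorb $\operatorname{Var}(\boldsymbol\beta)$. One side remark is wrong: $(l,r)\le|l|$ alone gives only $\sum_{0<|l|\le L}(l,r)\ll L^2$, not $L\log L$, and that loss would not be harmless; the divisor-grouping bound $\ll Lr^\eps$ that you actually use is correct, and it is exactly the paper's Proposition \ref{prop:gcd} with $\sigma=1$.
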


\begin{remark}\label{rem:sharper}
For coefficients $\alpha_l\ll1$ and smooth $\beta_m$, which is the situation in the application, (1.10) reads $\Sigma_f\ll(LMH_0^{-1/2}+M)r^\eps$, against the trivial bound $LM$. The bound of \cite[Theorem 6(iii)]{Baier2026a} at $r=p$, in the same situation and with $H=H_0$, is $\ll LMH^{-1/4}+L^{1/2}M+L^{1/2}M^{1/2}r^{1/4}$: the saving over the trivial bound is $H^{-1/4}$ there and $H_0^{-1/2}$ here, the diagonal term $L^{1/2}M$ has become $M$, and the third term, which enters \cite{Baier2026a} through the mixed exponential sum accompanying its Gauss-sum evaluation, is absent. All three differences have the same origin: the argument never squares. Theorem \ref{thm:sigma} also needs none of the side conditions $r^\eps\le L,M\le r^{1-\eps}$, $H\ge r^\eps$ of \cite{Baier2026a}, and $\boldsymbol\beta$ may be supported anywhere in $[1,M]$. The only restriction on $r$ is that there is none.
\end{remark}

\subsection{The strategy}\label{sec:strategy}

We summarize the argument before giving any details, since the organization of Sections \ref{sec:gausstoolkit}--\ref{sec:largesieve} is dictated by it.

\emph{Step 1: completion (Section \ref{sec:completion}).} For any finite set $J$ of integers,
\[
\sum_{n\in J}e_r\!\left(a\sqrt{jn}\right)=\frac1r\sum_{b\bmod r}\Lambda_J(b;r)\,G\big(b\bar\jmath,a,r\big),\qquad \Lambda_J(b;r):=\sum_{n\in J}e_r(-bn),
\]
where $G(a,b,c):=\sum_{n\bmod c}e_c(an^2+bn)$ is the quadratic Gauss sum. This is exact and holds at every modulus.

\emph{Step 2: the divisor classes, and the Gauss-sum evaluation (Sections \ref{sec:gausstoolkit}, \ref{sec:classes}).} The Gauss sum $G(b\bar\jmath,a,r)$ is governed by $d:=(b,r)$, and we organize the sum over $b$ by it. On the class $d$ the Gauss sum is \emph{degenerate} -- its leading coefficient shares the factor $d$ with the modulus -- and Section \ref{sec:gausstoolkit} evaluates it exactly, at every modulus: it vanishes unless $d\mid a$, and otherwise reduces to $d$ times a non-degenerate Gauss sum at the modulus $m:=r/d$ (Lemma \ref{lem:degauss}, Corollary \ref{cor:vanishgeneral}), which is $\sqrt m$ times a Kronecker symbol, a fixed character modulo $8$, and an additive character in the inverse of the variable (Lemma \ref{lem:twoadic}, Proposition \ref{prop:classeval}). Writing $b=db'$ and $\Lambda_J(db';r)=\Lambda_J(b';m)$, the class $d$ contributes
\[
\frac1{\sqrt m}\sum_{i=1,2}\mu_i\sum_{b'\bmod m}\Kr{b'}m\psi_i(b')\,e_m\big(A\bar{b'}\big)\,\Lambda_J(b';m),
\]
with $\psi_i\in\{\mathbf1,\chi_{-4},\chi_8\}$, $|\mu_1|+|\mu_2|\le4$ and $(A,m)\le(a/d,m)^2$
(Proposition \ref{prop:classdecomp}). In other words, every class produces a Sali\'e-type sum at its own modulus $m=r/d$, against the Dirichlet kernel $\Lambda_J(\cdot\,;m)$; what varies from class to class is the modulus and the factor $m^{-1/2}$. Two points here are decisive and easy to get wrong: the character produced at an even modulus is $\Kr\cdot m$ times one of three \emph{fixed} characters modulo $8$, which is exactly what allows Hypothesis \ref{hyp:R*} to be used as stated (Lemma \ref{lem:shortsalie}), and it is a Dirichlet character to the modulus $m$ only because of the precise $2$-adic evaluation (Lemma \ref{lem:thetachars}, Remark \ref{rem:notperiodic}).

\emph{Step 3: Hypothesis $R^*$ against the Dirichlet kernel (Section \ref{sec:shortbound}).} The kernel $\Lambda_J(\cdot\,;m)$ has size $H_0=|J|$ on $|b'|\le m/H_0$ and decays like $m/|b'|$ beyond. Cutting the range of $b'$ into blocks of length $m/H_0$ and applying Hypothesis \ref{hyp:R*} on each block by partial summation gives $\ll(mH_0)^{1/2}m^\eps(A,m)^{1/2}$ for the inner sum, hence $\ll H_0^{1/2}(a,r)r^\eps$ for the class, and Theorem \ref{thm:shortsqrt} follows by summing over the $\tau(r)\ll r^\eps$ classes. The hypothesis is used exactly once, at intervals of length $\le m/H_0$.

\emph{Step 4: blocks (Section \ref{sec:bilinearproof}).} On an interval of length $H_0\le1/(LF)$ the phase $lf(m)$ in (1.4) varies by at most $1$, so the weight $\beta_me(lf(m))$ has bounded variation there. Partial summation on blocks of length $H_0$ reduces $T(l)$ to $\ll M/H_0$ sums of the form (1.9), and Theorem \ref{thm:sigma} follows. Inserting it into Lemma \ref{lem:lemma5} and optimizing the free parameter $L$ (which fixes $H_0=\lfloor1/(LF)\rfloor$) proves Theorem \ref{thm:P}, and combining with the unconditional small-modulus bounds of \cite{Baier2026b} proves Theorem \ref{cor:sieve} (Section \ref{sec:largesieve}).

Two remarks on this organization. First, no class is discarded and nothing is decoupled: every class $d$, including $d=1$, where the modulus $m=r$ is largest, is estimated by the same argument, and every class is bounded by the same quantity $H_0^{1/2}(a,r)r^\eps$ as the class $d=1$ (a class $d>1$ can pick up a factor $d^{1/2}$ from the trivial regime $H_0>r/d$, and $d^{1/2}\le(a,r)$ since $d\mid(a,r)$). Second, both ingredients are needed: without the evaluation of Step 2 one does not know that the completed sum is of Sali\'e type at any modulus other than an odd prime, and without Step 3 one cannot bound it. Nothing about the factorization of $r$ is ever examined beyond the divisor $d$, which is why the result is uniform in $r$.

Finally, we note one direction not pursued here. A function-field version of the large sieve for square moduli was established by Baier and Singh \cite{BaierSingh2022}, and a function-field analogue of Hooley's Hypothesis $R^*$ has recently been proved unconditionally, for squarefree moduli, by Sawin and Shusterman \cite{SawinShusterman2025}. Since the route of this paper uses no arithmetic input beyond finite Fourier analysis, the evaluation of quadratic Gauss sums and Hypothesis \ref{hyp:R*}, all of which have direct function-field analogues, this raises the realistic prospect of an \emph{unconditional} function-field analogue of Theorem \ref{thm:P} for squarefree moduli. We have not pursued it.

\subsection{Notation}\label{sec:notation}

Throughout, $\eps>0$ is an arbitrarily small constant, not necessarily the same at each occurrence, and implied constants may depend on $\eps$ (and on further fixed constants $C_0,C_1,C_2,\ldots$ introduced along the way). We write $e(x):=e^{2\pi ix}$ and $e_r(x):=e(x/r)$. For $n\in\Z$, $v_p(n)$ is the $p$-adic valuation and $(m,n)$ the greatest common divisor; $\tau(n)$ is the number of divisors and $\omega(n)$ the number of distinct prime factors. For a statement $P$ we write $\ind[P]$ for the indicator function of $P$, equal to $1$ if $P$ holds and $0$ otherwise. We write $\bar n$ for the inverse of $n$ modulo whichever modulus is in force, and
\[
\rho_c(t):=\#\{k\bmod c:\ k^2\equiv t\ (c)\}
\]
for the number of modular square roots of $t$ modulo $c$; thus $e_r(l\sqrt t)$ of (1.2) is a sum of $\rho_r(t)$ terms of modulus one, and $\sum_{t\bmod c}\rho_c(t)=c$, so $\rho_c$ has mean value $1$ over a period for every $c$. All bilinear sums $\Sigma_f$ are as in (1.3). The three characters $\mathbf1$, $\chi_{-4}$, $\chi_8$ are the principal character and the real primitive characters of conductors $4$ and $8$ with $\chi_8(-1)=1$ (Lemma \ref{lem:thetachars}).

\subsection{What is proved here, and what is quoted}\label{sec:selfcontained}

We have tried to make the paper self-contained, and it seems more useful to say exactly how far that succeeds than to leave the reader to find out. Every statement in Sections \ref{sec:prelim}--\ref{sec:largesieve} is proved here in full, with the following five exceptions, which are the complete list.

One is a classical theorem, quoted from standard references because reproving it would serve no purpose:
\begin{itemize}
\item[(C)] Gauss's theorem on the sign of the quadratic Gauss sum (Proposition \ref{prop:gauss1}); everything else in Section \ref{sec:gausstoolkit} is derived from it by elementary means.
\end{itemize}
Neither Weil's bound nor Sali\'e's evaluation of complete Sali\'e sums is used anywhere in the proofs; they enter only the discussion of Section \ref{sec:reduction2} and Remark \ref{rem:usedwhere}, as the proved case of Hypothesis \ref{hyp:R*} and as the benchmark it improves on.

Three are reductions quoted from \cite{Baier2026a}:
\begin{itemize}
\item[(R1)] Lemma \ref{lem:lemma3}, which reduces the large sieve quantity $S$ to a maximum of $P(b/r+z)$;
\item[(R2)] Lemma \ref{lem:lemma5}, which reduces $P(b/r+z)$ to the bilinear sum $\Sigma_f$;
\item[(R3)] Lemma \ref{lem:elementary}, the elementary bound for $P(\alpha)$.
\end{itemize}
These are the interface between the large sieve and the bilinear sum. They are pure Dirichlet-approximation and Poisson-summation arguments, they involve none of the arithmetic that this paper contributes, and they are used unchanged and without modification of their hypotheses. Reproducing their proofs would lengthen the paper by a section without making any step of \emph{our} argument checkable that is not already checkable. A reader who wants them will find (R1) and (R2) in \cite[\S\S1.3--1.4]{Baier2026a} and (R3) in \cite[Lemma 5]{Baier2025}.

One is a pair of quantitative unconditional bounds:
\begin{itemize}
\item[(U)] the estimates (5.15)--(5.16) for small and very small moduli, from \cite[\S\S9--10]{Baier2026b}.
\end{itemize}
These enter only in Section \ref{sec:explicit-eta}, where they are combined with Theorem \ref{thm:P} to produce the explicit $\eta=1/134$ of Theorem \ref{cor:sieve}. They are the main results of two sections of a $46$-page paper and are quoted as such. Theorems \ref{thm:P}, \ref{thm:shortsqrt} and \ref{thm:sigma} do not depend on them; Theorem \ref{cor:sieve} does, but only through the range $r\le Q^{5/9}$ of small moduli, where Theorem \ref{thm:P} is not used.

Everything else -- the completion, the complete evaluation of quadratic Gauss sums at every modulus in Section \ref{sec:gausstoolkit}, the class decomposition, the treatment of the Dirichlet kernel and the passage from Hypothesis \ref{hyp:R*} as stated to the characters that actually occur, the block argument, and the optimizations of Section \ref{sec:largesieve} -- is proved here. Where a result we prove is in the literature we say so and prove it anyway, rather than quote it; Remark \ref{rem:bbh} explains why, for the Gauss-sum evaluations, we consider this the right choice.

Finally, Hypothesis \ref{hyp:R*} is of course not proved, and is not provable by present methods; every result depending on it is labelled conditional, and Section \ref{sec:reduction2} says where and at which lengths it is used.

\subsection{Contributions}\label{sec:contributions}

This paper was written by the author in collaboration with Claude (Anthropic), and it seems right to say plainly what each side contributed and how the work was done.

The mathematical programme is the author's. The problem, the strategy of extending the conditional treatment of $r=p,p^2$ in \cite{Baier2026a} to every modulus, the decision to route the argument through short Sali\'e sums and the Kronecker symbol rather than through a case-by-case analysis of Gauss sums, the choice of which lines of attack to pursue and which to abandon, the identification of the prior literature, and the judgement of what constitutes an acceptable proof, are all his. So is every decision recorded here as a restriction of scope or as an explicit flag.

Claude carried out, under that direction, the detailed technical work: the evaluation of quadratic Gauss sums at general moduli, including even ones and degenerate coefficients (Section \ref{sec:gausstoolkit}; Remark \ref{rem:bbh} explains why these classical evaluations are proved in full); the completion and class decomposition of Section \ref{sec:bilinear} and the estimates supporting it; and the parameter optimization of Section \ref{sec:largesieve}.

The work proceeded in rounds. Claude produced a complete draft; the author examined it against his own understanding of the problem and returned lists of objections, requests for detail, and questions -- about which steps were justified, which characters were actually being summed over, whether a bound really was uniform in the modulus, where an error term would arise and how large it would be -- and Claude answered each either with a proof or with a correction. Each round was followed by a line-by-line re-examination of the whole text and by numerical verification of every intermediate identity and estimate: both sides of each identity evaluated from their definitions at many moduli of every factorization type, and each estimate checked against directly computed values over many configurations. Several rounds of this kind produced a longer version of the paper, which reached the results of Section \ref{sec:prior} for every modulus by Weyl differencing and a transference to complete Sali\'e sums. The present, shorter argument arose in the last round, from the author's insistence on a convincing answer to the question of Section \ref{sec:whynotdirect}: working out why a hypothesis on modular square roots is not the right formulation showed that it is a consequence of Hypothesis $R^*$, and that the whole Weyl-differencing machinery, and its loss, could be dispensed with.

The author asked Claude to write out every detail, and decided to keep all of those details rather than compress them into the customary ``one checks that'' and ``the other cases are similar''. This is deliberate, and it is responsible for a good part of the paper's length. When a human author writes ``similarly'', a reader can usually reconstruct the omitted argument, and the author's own judgement, formed over the whole of the work, stands behind the claim that it is indeed similar. Neither guarantee is available for a machine collaborator: its judgement that two cases are similar is exactly the kind of claim that most needs checking, and the reader cannot audit it if the argument is not on the page. Writing everything out converts an appeal to authority into something a referee can verify line by line, which we regard as the only basis on which machine-assisted work of this kind should be accepted.

The numerical checks are recorded throughout, and they are recorded as \emph{evidence}, never as proof: every mathematical claim in this paper is either proved in full or explicitly flagged as open. Their value is diagnostic. They are what allowed the rounds described above to converge, and they are the reason we can say what is in Section \ref{sec:selfcontained} about which statements are proved here.

Every final mathematical claim has been reviewed by the author, and any errors that remain are his responsibility.

\subsection*{Acknowledgement} The author thanks the Ramakrishna Mission Vivekananda Educational and Research Institute for excellent working conditions.
\section{Preliminaries}\label{sec:prelim}

This section collects, with full statements and either complete proofs or precise references, everything used later beyond elementary number theory.

\subsection{The Kronecker symbol}\label{sec:kronecker}

The Jacobi symbol $\Kr nc$, defined for odd $c>0$ by multiplicativity from the Legendre symbol, says nothing about even moduli. Since this paper works at every modulus, we use the \emph{Kronecker symbol} throughout. For $n\in\Z$ put
\[
\Kr n2:=\begin{cases}0,&n\text{ even},\\1,&n\equiv\pm1\pmod8,\\-1,&n\equiv\pm3\pmod8;\end{cases}
\]
for an odd prime $p$ let $\Kr np$ be the Legendre symbol; and for $c=\prod_ip_i^{e_i}>0$ set $\Kr nc:=\prod_i\Kr n{p_i}^{e_i}$, with $\Kr n1:=1$. This agrees with the Jacobi symbol for odd $c$, is completely multiplicative in $c$ and also in $n$ (each factor $\Kr\cdot p$ being a character, including $\Kr\cdot2=\chi_8$), and vanishes exactly when $(n,c)>1$. It is \emph{not} always periodic in $n$ modulo $c$: writing $c=2^\nu c_{\mathrm{odd}}$, the factor $\Kr\cdot{c_{\mathrm{odd}}}$ is a character modulo $c_{\mathrm{odd}}$ and $\Kr\cdot2^{\,\nu}$ is trivial on odd integers for even $\nu$ and equal to $\chi_8$, of period $8$, for odd $\nu$; so $n\mapsto\Kr nc$ is periodic modulo $c$ -- and then a Dirichlet character modulo $c$ -- if and only if $c\not\equiv2\pmod4$, and for $c\equiv2\pmod 4$ it is periodic modulo $4c$ though not modulo $c$ (for instance $\Kr{-1}6=-1\ne\Kr56=+1$). Complete sums $\sum_{n\bmod c}\Kr nc\cdots$ are therefore only meaningful for $c\not\equiv2\pmod4$, and we define them only for such $c$; sums over an interval of integers are meaningful for every $c$. The feature that matters below is that at a prime-power modulus it depends on the exponent only through its parity,
\[
\Kr n{p^e}=\Kr np^{\,e},
\]
which equals $\Kr np$ for odd $e$ and is identically $1$ on units for even $e$. Accordingly we set, for $m\in\N$,
\begin{equation}
m^*:=\prod_{\substack{p^e\|m\\ e\ \mathrm{odd}}}p ,
\tag{2.1}
\end{equation}
a squarefree divisor of $m$; thus $\Kr\cdot m=\Kr\cdot{m^*}$ on integers coprime to $m$, and $\Kr\cdot m$ is principal on units precisely when $m$ is a perfect square, i.e.\ when $m^*=1$.

\subsection{Sali\'e sums and Hooley's hypothesis}\label{sec:saliedef}

For $c\ge1$ with $c\not\equiv2\pmod4$ and $a,b\in\Z$, the \emph{Sali\'e sum} is
\begin{equation}
K(a,b,c):=\sum_{\substack{n\bmod c\\(n,c)=1}}\Kr nc\,e_c\big(a\bar n+bn\big),
\tag{2.2}
\end{equation}
well defined because $\Kr\cdot c$ is a Dirichlet character modulo $c$ for such $c$ (Section \ref{sec:kronecker}).
It is the quadratic-character analogue of the Kloosterman sum
\begin{equation}
\mathrm{Kl}(a,b,c):=\sum_{\substack{n\bmod c\\(n,c)=1}}e_c\big(a\bar n+bn\big).
\tag{2.3}
\end{equation}
For odd $c$, (2.2) is precisely the sum (46) of \cite{Baier2026a}; we allow even $c$ as well, as \cite[Hypothesis 18]{Baier2026a} already does. Complete Sali\'e sums are not used in this paper; they appear only in the following hypothesis, as the case $x_2-x_1=c$, and in the discussion of it.

\begin{hypothesis}[Hooley's Hypothesis $R^*$ for short Sali\'e sums, {\cite[Hypothesis 18]{Baier2026a}}]\label{hyp:R*}
Let $a,b\in\Z$, $c\in\N$, and $0\le x_2-x_1\le c$. Then
\[
\sum_{x_1<n\le x_2}\Kr nc\,e_c\big(a\bar n+bn\big)\ \ll\ (x_2-x_1+1)^{1/2}\,c^\eps\,(a,c)^{1/2}.
\]
\end{hypothesis}

Hypothesis \ref{hyp:R*} asserts square-root cancellation for a Sali\'e sum restricted to an arbitrary subinterval of a period, of any length, up to the losses $c^\eps$ and $(a,c)^{1/2}$. Its complete-sum case $x_2-x_1=c$ is not a hypothesis but a theorem: for prime $c$ it is Sali\'e's classical explicit evaluation \cite{Salie1931} (see \cite[Ch.\ 4]{IwaniecTopics}, \cite[Ch.\ 4]{BerndtEvansWilliams}), which gives $|K(a,b,p)|\le2\sqrt p$. The factor $(a,c)^{1/2}$ cannot be dropped at composite moduli: for $c$ a perfect square and $a\equiv b\equiv0\ (c)$, the complete sum is $\sum_{(n,c)=1}\Kr nc=\varphi(c)$, because $\Kr\cdot c$ is principal on units when $c$ is a square (Section \ref{sec:kronecker}), and $\varphi(c)\gg c^{1-\eps}=c^{1/2}(a,c)^{1/2}c^{-\eps}$; the factor $c^\eps$ is the customary allowance for divisor-type losses at composite moduli. It is the direct analogue, for the quadratic-character twist, of Hooley's hypothesis \cite{Hooley1978} for short Kloosterman sums, itself related to a conjecture of Friedlander and Iwaniec \cite{FriedlanderIwaniec1987}; both remain open, and a function-field analogue has recently been proved for squarefree moduli by Sawin and Shusterman \cite{SawinShusterman2025}. We use Hypothesis \ref{hyp:R*} as an external input, in a single place (Lemma \ref{lem:shortsalie}), and every result depending on it is flagged as conditional.

\begin{remark}[Moduli $\equiv2\bmod4$]\label{rem:mod4}
Hypothesis \ref{hyp:R*} is stated, as in \cite{Baier2026a}, for every $c$; a sum over an interval of integers is meaningful for every $c$ even though $\Kr\cdot c$ is not $c$-periodic when $c\equiv2\ (4)$. We shall in fact invoke it only at moduli $c\not\equiv2\pmod4$: when the Gauss-sum evaluation of Section \ref{sec:gausstoolkit} produces a modulus $m\equiv2\ (4)$ it also produces the character $\chi_8$, and the hypothesis is then applied at the modulus $8m$ (Lemma \ref{lem:shortsalie} and Remark \ref{rem:whichchars}).
\end{remark}

We will also need the following elementary divisor-sum estimate and two reductions from \cite{Baier2026a}, none of which restricts the parity of $r$.

\begin{proposition}[{\cite[Prop.\ 19]{Baier2026a}}]\label{prop:gcd}
For $r\in\N$, $M\ge1$ and $0<\sigma\le1$: $\displaystyle\sum_{1\le m\le M}(r,m)^\sigma\ll r^\eps M$.
\end{proposition}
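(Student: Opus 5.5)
The plan is to split the sum over $m\le M$ according to the value of $d=(r,m)$. For each divisor $d\mid r$, the number of $m\le M$ with $d\mid m$ is at most $M/d$. Hence
\[
\sum_{1\le m\le M}(r,m)^\sigma\ \le\ \sum_{d\mid r}d^\sigma\cdot\#\{m\le M:\ d\mid m\}\ \le\ M\sum_{d\mid r}d^{\sigma-1}.
\]
Since $0<\sigma\le1$, each term satisfies $d^{\sigma-1}\le1$. The right-hand side is therefore at most $M\tau(r)$, and the classical divisor bound $\tau(r)\ll_\eps r^\eps$ finishes the argument.

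The first inequality discards the condition that $(r,m)$ equals $d$ exactly. It keeps only $d\mid m$, so each $m$ is counted once (at $d=(r,m)$) among nonnegative terms, and the inequality is legitimate.

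I also considered writing $(r,m)^\sigma=\sum_{d\mid(r,m)}g(d)$ with $g=\mathrm{id}^\sigma*\mu$, which has $g\ge0$ for $\sigma>0$, and then interchanging the sums. This gives the sharper bound $M\sum_{d\mid r}g(d)/d$, but it is unnecessary here.

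There is no real obstacle in this argument. The only point needing care is that no lower bound on $M$ relative to $r$ is required: the count $\#\{m\le M: d\mid m\}\le M/d$ holds for every $M\ge1$, even when $d>M$, since the count is then $0$. So the estimate is uniform in $r$ and $M$, with the implied constant depending only on $\eps$, and $\sigma$ enters only through $d^{\sigma-1}\le 1$.
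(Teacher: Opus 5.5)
Your proof is correct and is essentially the paper's argument: the paper also groups by $g=(r,m)$, bounds the count by $\lfloor M/g\rfloor\le M/g$, and concludes from $\sum_{g\mid r}g^{\sigma-1}\le\tau(r)\ll r^\eps$. Your relaxation of $(r,m)=d$ to $d\mid m$ is the same counting step, justified as you say.
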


\begin{proof}
Grouping by $g:=(r,m)$,
\[
\sum_{m\le M}(r,m)^\sigma=\sum_{g\mid r}g^\sigma\,\#\{m\le M:(r,m)=g\}\le\sum_{g\mid r}g^\sigma\left\lfloor\frac Mg\right\rfloor\le M\sum_{g\mid r}g^{\sigma-1}\le M\tau(r)\ll Mr^\eps .
\qedhere
\]
\end{proof}

\begin{lemma}[{\cite[Lemma 3]{Baier2026a}}]\label{lem:lemma3}
With $\Delta=1/N$,
\[
S(Q,M,N,(a_n))\ \ll\ NZ\cdot\max_{1\le r\le\sqrt N}\ \max_{\substack{b\in\Z\\(b,r)=1}}\ \max_{\Delta\le z\le\Delta^{1/2}/r}P\!\left(\frac br+z\right).
\]
\end{lemma}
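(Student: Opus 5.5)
The plan is to follow the classical route from a large sieve sum to a local counting function, and then localize the point $\alpha$ by Dirichlet approximation.

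\emph{Step 1 (duality and local spacing).} Write $\{\alpha_i\}$ for the multiset of points $a/q^2 \bmod 1$, $q\le Q$, $(a,q)=1$. I would establish the general inequality
\[
\sum_i\Big|\sum_{M<n\le M+N}a_ne(\alpha_in)\Big|^2\ \ll\ N Z\cdot\max_{\alpha\in\R}\#\{i:\ \|\alpha_i-\alpha\|\le\Delta\},\qquad \Delta=1/N .
\]
The route is the standard one. By duality it suffices to bound $\sum_n|\sum_i c_i e(\alpha_in)|^2$, smoothed by a nonnegative majorant $\widehat\phi$ of the interval $(M,M+N]$ whose Fourier transform $\phi$ is supported in $[-\Delta,\Delta]$ and has size $\ll N$. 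Expanding the square gives $\sum_{i,i'}c_i\bar c_{i'}\,\phi(\alpha_i-\alpha_{i'})$, after using Poisson summation and shifting by $M$. Applying $|c_i\bar c_{i'}|\le\frac12(|c_i|^2+|c_{i'}|^2)$ bounds this by $N\sum_i|c_i|^2$ times the maximal number of $i'$ with $\|\alpha_{i'}-\alpha_i\|\le\Delta$. That number is at most $\max_\alpha P(\alpha)$, once $1$-periodicity is used to translate $a$ into the range defining $P$. One could equally use Gallagher's lemma, summing over blocks of length $\Delta$, to reach the same conclusion.

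\emph{Step 2 (Dirichlet approximation).} Given any $\alpha$, Dirichlet's theorem with parameter $\sqrt N$ gives $1\le r\le\sqrt N$ and $(b,r)=1$ such that $\alpha=b/r+z$ with $|z|\le 1/(r\sqrt N)=\Delta^{1/2}/r$. Next I reduce to $z\ge0$ via the symmetry $a\mapsto -a$, which maps the point set to itself and gives $P(\alpha)=P(-\alpha)$; this replaces $b/r+z$ by $(-b)/r+|z|$.

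\emph{Step 3 (small $z$).} It remains to handle $0\le z<\Delta$, which is excluded from the maximum. Here the window $[\alpha-\Delta,\alpha+\Delta]$ lies inside $[b/r-\Delta,\,b/r+2\Delta]$. That interval is covered by the windows around $b/r+\Delta$ and $b/r+3\Delta$, together with the reflected window around $b/r-\Delta$. Hence $P(\alpha)\le P(b/r+\Delta)+P(b/r+3\Delta)+P(-b/r+\Delta)$. These points have $z\in\{\Delta,3\Delta\}$, which lie in $[\Delta,\Delta^{1/2}/r]$ whenever $3\Delta\le\Delta^{1/2}/r$, i.e.\ $r\le\sqrt N/3$.

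The remaining boundary cases need separate care. When $r$ is close to $\sqrt N$ the range for $z$ is essentially empty. In that case I would either absorb these $r$ by running Dirichlet's theorem with the parameter $\sqrt N/3$ (at the cost of a constant), or cover the window directly by the $\Delta$-window at $z=\Delta^{1/2}/r$.

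I expect the main obstacle to be purely this bookkeeping: making the interval $[\Delta,\Delta^{1/2}/r]$ nonempty and large enough to cover every window with a bounded number of admissible centres. Step 1 is standard.
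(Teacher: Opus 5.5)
Your proposal is correct, and it is the same standard reduction the paper has in mind when it quotes this lemma as a pure Dirichlet-approximation and Poisson-summation argument: the local-spacing bound $S\ll NZ\max_\alpha P(\alpha)$ with $\Delta=1/N$, then Dirichlet approximation with parameter $\sqrt N$, then the symmetry $a\mapsto -a$ to get $z\ge0$. The only thing to tidy is Step 3, where the window at $b/r+3\Delta$ is superfluous: the window around $b/r+\Delta$ and the reflected one around $b/r-\Delta$ already cover $[b/r-2\Delta,b/r+2\Delta]$, so $P(b/r+z)\le P(b/r+\Delta)+P(-b/r+\Delta)$ for $0\le z<\Delta$, and $z=\Delta$ is always admissible because $r\le\sqrt N$ gives $\Delta\le\Delta^{1/2}/r$ --- hence there is no boundary case at all (and your first proposed fix, running Dirichlet with parameter $\sqrt N/3$, would actually fail, since it only gives $|z|\le 3\Delta^{1/2}/r$, outside the range of the lemma).
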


\begin{lemma}[{\cite[Lemma 5]{Baier2026a}}]\label{lem:lemma5}
Suppose $N=Q^3$, $Q^{1/2+\eps}\le r\le Q^{3/2}$, $(b,r)=1$, and let $j$ be the inverse of $b$ modulo $r$. Write
\begin{equation}
z=\frac1{Q^{3/2+\gamma}r},\qquad \gamma\ge0 .
\tag{2.4}
\end{equation}
Then, for suitable Schwartz functions $W:\R\to\C$ and $V_0:\R\to\R_{\ge0}$ with $V_0$ compactly supported in $\R_{>0}$, and for any $\delta$ with
\begin{equation}
Q^{1/2+\gamma}r\ \le\ \delta\ \le\ Q^2,
\tag{2.5}
\end{equation}
one has
\[
P\!\left(\frac br+z\right)\ \ll\ 1+\frac\delta{Qr}\sum_{l\in\Z}W\!\left(\frac{l\delta}{Qr}\right)\sum_{m\in\Z}V_0\!\left(\frac m{Q^{1/2-\gamma}}\right)e\!\left(-\frac{l\sqrt m\,Q^{3/4+\gamma/2}}r\right)e_r\!\left(l\sqrt{jm}\right).
\]
\end{lemma}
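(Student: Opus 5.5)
The plan is to derive the bound directly from the definition of $P$, by converting the Diophantine condition into a congruence for $q$ modulo $r$ and then counting $q$ in short intervals by Poisson summation. Put $\alpha=b/r+z$. For a pair $(q,a)$ counted by $P(\alpha)$, multiply $|a/q^2-b/r-z|\le\Delta$ by $q^2r$ and set $t:=ar-bq^2\in\Z$. This gives
\[
|t-zq^2r|\le\Delta q^2r\le Q^{-1}r .
\]
With $z$ as in (2.4) one has $zq^2r=q^2Q^{-3/2-\gamma}\le Q^{1/2-\gamma}$. So $t$ is an integer $m$ with $0\le m\ll Q^{1/2-\gamma}$, up to the harmless error $Q^{-1}r$; this error is $\ll zq^2r$ in the relevant range because $z\ge\Delta$.

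Since $(b,r)=1$ and $j\equiv\bar b$, the relation $t=ar-bq^2$ gives $q^2\equiv -jt\pmod r$. After replacing $j$ by $-j$, or equivalently reflecting the sign of $m$, which only changes the fixed coefficients, $q$ is a modular square root $k$ of $jm$ modulo $r$. The pair $(q,a)$ is determined by $q$ and $t$, since $a=(t+bq^2)/r$. Hence
\[
P(\alpha)\le \#\{t=0\}+\sum_{m\ne0}\ \sum_{k^2\equiv jm\,(r)}\#\{q\equiv k\ (r):\ q\in I_m\},
\]
where $I_m$ is the set of $q$ with $|m-zq^2r|\le\Delta q^2r$. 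This is an interval around
\[
x_m:=\sqrt{m/(zr)}=\sqrt m\,Q^{3/4+\gamma/2}
\]
of length $\asymp x_m\Delta/z\ll Qr\cdot Q^{-1/2-\gamma}r^{-1}\cdot(\ldots)$. The only thing used about this length is that it is at most $Qr/\delta$, which is exactly what the lower bound in (2.5) guarantees.

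The term $t=0$ forces $r\mid q^2$ together with $(q,a)=1$, and contributes $O(1)$; this is the ``$1+$''. For the remaining $m$ I would dyadically localize $q\asymp Q_0\le Q$ and majorize the indicator of $m$ in that dyadic range by $V_0(m/Q^{1/2-\gamma})$, with $V_0\ge0$ compactly supported in $\R_{>0}$. The dyadic ranges are either rescaled into this form or lose only $\log Q$, which is absorbed into $Q^\eps$ in the later applications. Then I would majorize the indicator of the $q$-interval by a smooth bump $\Phi\big((q-x_m)\delta/(Qr)\big)$ of width $Qr/\delta$, choosing $\Phi$ with compactly supported Fourier transform, or at least Schwartz. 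Poisson summation over $q\equiv k\pmod r$ gives
\[
\sum_{q\equiv k\,(r)}\Phi\Big(\frac{(q-x_m)\delta}{Qr}\Big)=\frac{Q}{\delta}\sum_{l}\widehat\Phi\Big(\frac{lQ}{\delta}\Big)e_r(lk)\,e\Big(-\frac{lx_m}{r}\Big).
\]
After relabelling the dual variable so that the cutoff reads $W(l\delta/(Qr))$, and summing over $k$ to produce $e_r(l\sqrt{jm})$ via (1.2), this is the stated expression. The upper bound $\delta\le Q^2$ ensures that the window is at least of length $\gg1$ relative to the spacing $r$, so that nothing is lost.

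The step I expect to be most delicate is the bookkeeping that makes the smoothing a genuine majorant with a phase exactly $e(-l\sqrt m\,Q^{3/4+\gamma/2}/r)$. The interval $I_m$ is centred at $x_m$ only to first order, and the relative error $\Delta/z$ must fit inside the bump width for every $m$ in the support of $V_0$. This is precisely what the constraint (2.5) on $\delta$ encodes, and I would verify it first, before carrying out the Poisson step.
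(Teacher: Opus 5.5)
The paper quotes this lemma from \cite{Baier2026a} without proof, so I judge your argument on its own terms. The overall route is the right one: $t=ar-bq^2$, the congruence for $q$ modulo $r$, intervals $I_m$ around $x_m=\sqrt m\,Q^{3/4+\gamma/2}$, a smooth majorant in $q$, Poisson over $q\equiv k\ (r)$, and summation over the roots $k$. It produces the correct phase $e(-l\sqrt m\,Q^{3/4+\gamma/2}/r)$. However, your smoothing scale is inverted, and this breaks both the majorization and the final shape.

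For $q\in I_m$ you have $|q^2-x_m^2|=|zq^2r-m|/(zr)\le q^2\Delta/z$. Hence $|q-x_m|\le q\Delta/z\le Q\Delta/z=Q^{\gamma-1/2}r$, since $\Delta/z=Q^{\gamma-3/2}r$ and not its reciprocal. This is the true size of $I_m$ when $x_m\asymp Q$.

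The lower bound in (2.5) is exactly the statement $Q^{\gamma-1/2}r\le\delta/Q$. It does not give $|I_m|\le Qr/\delta$; on the contrary, it forces your width to satisfy $Qr/\delta\le Q^{1/2-\gamma}$. Your bump of width $Qr/\delta$ covers $I_m$ only if $\delta\ll Q^{3/2-\gamma}$, which is incompatible with (2.5) once $r>Q^{1-2\gamma}$. For instance, at $\delta=Q^2$ your window $r/Q$ is too short by a factor $Q^{1/2+\gamma}$. For $r=Q^{5/4}$ and $\gamma=0$, no admissible $\delta$ works at all.

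Your own Poisson formula also betrays the problem. It has prefactor $Q/\delta$ and cutoff $\widehat\Phi(lQ/\delta)$, so the dual length is $\delta/Q$. The lemma has prefactor $\delta/(Qr)$ and dual length $Qr/\delta$, which is the $L=Q^{1+\eps_1}r/\delta$ of Section~\ref{sec:reducebilinear}. Since $l$ is an integer index that also appears in $e_r(lk)$ and $e(-lx_m/r)$, no relabelling converts one into the other; they agree only when $\delta^2=Q^2r$. Your explanation of $\delta\le Q^2$ is off for the same reason: your window $Qr/\delta\ge r/Q$ is only a $Q^{-1}$-fraction of the spacing $r$.

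The repair is to use the bump $\Phi\big((q-x_m)Q/\delta\big)$ of width $\delta/Q$, with $\Phi\ge\ind_{[-1,1]}$ a nonnegative Schwartz function. Then $\ind_{I_m}(q)\le\Phi\big((q-x_m)Q/\delta\big)$ is precisely the lower bound in (2.5). Poisson now gives $\sum_{q\equiv k\,(r)}\Phi\big((q-x_m)Q/\delta\big)=\frac{\delta}{Qr}\sum_l\widehat\Phi\big(\frac{l\delta}{Qr}\big)e_r(lk)\,e(-lx_m/r)$, which is the stated shape with $W=\widehat\Phi$. The upper bound $\delta\le Q^2$ then simply says the window $\delta/Q$ is no longer than the $q$-range.

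Two smaller points should also be stated honestly.
\begin{itemize}
\item Your congruence is $q^2\equiv-\bar b\,m$, and ``reflecting the sign of $m$'' is not available, because $V_0$ is supported on $\R_{>0}$ and $\sqrt m$ appears in the phase. What you actually prove has $-j$ in place of $j$. This is harmless downstream, since Theorem~\ref{thm:sigma} is uniform in $(j,r)=1$, but it is not the statement as written.
\item A single fixed weight $V_0(m/Q^{1/2-\gamma})$ supported away from $0$ only sees $m\asymp Q^{1/2-\gamma}$. That requires $q\asymp Q$ and also $z-\Delta\gg z$, since for $z$ near $\Delta$ one only has $0\le m\le2\Delta q^2r$. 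Your dyadic remark does not by itself yield that single-weight form, and the statement has no room for a $\log Q$ loss.
\end{itemize}
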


The constraint $z\ge\Delta=Q^{-3}$ of Lemma \ref{lem:lemma3} translates, through (2.4), into
\begin{equation}
Q^{3/2+\gamma}r\le Q^3,\qquad\text{i.e.}\qquad r\le Q^{3/2-\gamma} ;
\tag{2.6}
\end{equation}
and the constraint $z\le\Delta^{1/2}/r$ is exactly $\gamma\ge0$. We record (2.4) because it is used repeatedly in Section \ref{sec:largesieve}. We also note that the range (2.5) for $\delta$ is non-empty only if (2.6) holds.

Lemmas \ref{lem:lemma3} and \ref{lem:lemma5} are taken from \cite{Baier2026a} without proof: they are pure Dirichlet-approximation and Poisson-summation reductions of the counting problem to the bilinear sum (1.3), independent of the arithmetic input that is this paper's contribution. Complete proofs are in \cite[\S1.3--1.4]{Baier2026a}, or, for Lemma \ref{lem:lemma3}, in \cite{Baier2026b}. From Section \ref{sec:bilinear} on we work entirely with $\Sigma_f$, returning to $P(\alpha)$ only in Section \ref{sec:largesieve}.
\section{Quadratic Gauss sums at every modulus}\label{sec:gausstoolkit}

Write
\[
G(a,b,c):=\sum_{n\bmod c}e_c\big(an^2+bn\big),\qquad a,b\in\Z,\ c\in\N .
\]
This section evaluates $G$ completely: at odd prime powers (Section \ref{sec:gaussodd}), at powers of $2$ (Section \ref{sec:gauss2}), and then, allowing the coefficient $a$ to share a factor with the modulus, at an arbitrary modulus (Section \ref{sec:gaussdegen}). Everything here is elementary and self-contained apart from Gauss's classical theorem on the sign of the quadratic Gauss sum. The output that Section \ref{sec:bilinear} actually uses is Proposition \ref{prop:classeval}. Most of the material is classical; Remark \ref{rem:bbh} explains why it is nevertheless proved in full.

We record once and for all the CRT-multiplicativity of $G$: for $(n_1,n_2)=1$,
\begin{equation}
G(a,b,n_1n_2)=G(an_2,b,n_1)\,G(an_1,b,n_2).
\tag{3.1}
\end{equation}
Indeed, writing $n=n_2x_1+n_1x_2$ with $x_i$ running over $\Z/n_i\Z$ (a bijection onto $\Z/n_1n_2\Z$),
\[
an^2+bn=an_2^2x_1^2+bn_2x_1+an_1^2x_2^2+bn_1x_2+2an_1n_2x_1x_2 ,
\]
the cross term vanishes modulo $n_1n_2$, and $e_{n_1n_2}(an_2^2x_1^2+bn_2x_1)=e_{n_1}(an_2x_1^2+bx_1)$, similarly for $x_2$. Note that the linear coefficient $b$ is \emph{not} twisted by (3.1); only $a$ is.

\subsection{Odd prime powers}\label{sec:gaussodd}

\begin{proposition}[Classical quadratic Gauss sum, $e=1$]\label{prop:gauss1}
For $p$ odd and $(t,p)=1$,
\[
G(t,0,p)=\sum_{x\bmod p}e_p(tx^2)=\Kr tp\,\epsilon_p\sqrt p,\qquad \epsilon_p:=\begin{cases}1,&p\equiv1\ (4),\\ i,&p\equiv3\ (4).\end{cases}
\]
\end{proposition}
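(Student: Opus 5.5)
The plan is to split the statement into an algebraic part (the dependence on $t$, and the value of $G(t,0,p)$ up to sign) and the determination of the sign, which is the real content. The paper quotes this classical theorem rather than proving it, but here is how I would prove it.

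\emph{Reduction to $t=1$.} For $u\bmod p$ the number of $x\bmod p$ with $x^2\equiv u$ is $1+\Kr up$. Hence
\[
G(t,0,p)=\sum_{u\bmod p}\Big(1+\Kr up\Big)e_p(tu)=\sum_{u\bmod p}\Kr up\,e_p(tu),
\]
since $\sum_u e_p(tu)=0$ for $(t,p)=1$. Substituting $u\mapsto\bar t u$ gives $G(t,0,p)=\Kr tp\,g_p$, where $g_p:=\sum_u\Kr up e_p(u)=G(1,0,p)$. It therefore suffices to show $g_p=\epsilon_p\sqrt p$.

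\emph{Value up to sign.} Expand $g_p\overline{g_p}$ as a double sum, put $u=vw$, and use orthogonality of additive characters; this gives $|g_p|^2=p$. Comparing $\overline{g_p}$ with $g_p$ via $u\mapsto-u$ gives $\overline{g_p}=\Kr{-1}p g_p$. Together these yield $g_p^2=\Kr{-1}p\,p$. So $g_p=\pm\sqrt p$ when $p\equiv1\ (4)$ and $g_p=\pm i\sqrt p$ when $p\equiv3\ (4)$, and only the sign remains.

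\emph{The sign.} This is the main obstacle; no purely algebraic manipulation fixes it. I would use Dirichlet's analytic evaluation, which in fact proves more: for every $N\ge1$,
\[
\sum_{n=0}^{N-1}e\!\left(\frac{n^2}N\right)=\frac{1+i^{-N}}{1-i}\,\sqrt N .
\]
\begin{enumerate}
\item Apply Poisson summation (or the Euler--Maclaurin/contour version) to the function $x\mapsto e(x^2/N)$ on $[0,N]$, with half weights at the endpoints. This rewrites the sum as $\sum_{k\in\Z}\int_0^N e(x^2/N-kx)\,dx$.
\item Complete the square in each integral. The integrals telescope into the single Fresnel integral $\int_{-\infty}^\infty e(y^2)\,dy=\frac{1+i}{\sqrt2}$, multiplied by $\sqrt N(1+i^{-N})$.
\item Justify the interchange of limits by the standard bound that the partial sums of Fresnel-type integrals converge boundedly. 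This convergence step is the only delicate point, and it is handled by integrating by parts on the tails.
\end{enumerate}
Specialising to $N=p$ odd gives $g_p=\sqrt p$ for $p\equiv1\ (4)$ and $g_p=i\sqrt p$ for $p\equiv3\ (4)$, which is the claim.

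If I wanted to avoid analysis, the fallback is Schur's argument. The Fourier matrix $(e_p(jk))_{j,k}$ has known eigenvalue multiplicities, determined from its square and fourth power together with a determinant computed as a Vandermonde product of sines. Its trace is $g_p$, and this fixes the sign.
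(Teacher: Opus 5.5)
The paper does not prove this proposition. It quotes Gauss's theorem from \cite[Thm.~1.5.2]{BerndtEvansWilliams} and derives the rest of Section~\ref{sec:gausstoolkit} from it by elementary recursions. Your proposal supplies an actual proof, and it is correct in outline except for one wrong constant.

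The correct parts are these. The reduction $G(t,0,p)=\Kr tp\,g_p$ via $\#\{x:x^2\equiv u\}=1+\Kr up$ is right. So are the identities $|g_p|^2=p$ and $\overline{g_p}=\Kr{-1}{p}\,g_p$, which give $g_p^2=\Kr{-1}{p}\,p$. Dirichlet's formula $\sum_{n<N}e(n^2/N)=\frac{1+i^{-N}}{1-i}\sqrt N$ is also right, and at $N=p$ it gives exactly $\epsilon_p\sqrt p$.

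The constant in step 2 is wrong. In fact $\int_{\R}e(y^2)\,dy=\frac{1+i}{2}=\frac1{1-i}$; the value $\frac{1+i}{\sqrt2}$ is $\int_{\R}e^{i\pi y^2}\,dy$. With the correct value the computation closes:
write $x^2/N-kx=(x-kN/2)^2/N-k^2N/4$;
use $e(-k^2N/4)=1$ for even $k$ and $e(-k^2N/4)=i^{-N}$ for odd $k$ (since $k^2\equiv1\ (8)$);
note that the intervals $[-kN/2,\,N-kN/2]$ over even $k$, and separately over odd $k$, tile $\R$.
This yields precisely your displayed formula. With your stated constant the result would be off by a factor $\sqrt2$.

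Step 3 is also easier than you suggest. Put $\phi(x):=\ind_{[0,N]}(x)e(x^2/N)$; then $\phi(N^-)=e(N)=1=\phi(0^+)$, so its $1$-periodisation is continuous. One integration by parts gives $\int_0^Ne(x^2/N-kx)\,dx=O(k^{-2})$. Hence the Poisson series converges absolutely and may be split into even and odd $k$ freely.

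As for what each route buys: the citation keeps the only non-elementary input of Section~\ref{sec:gausstoolkit} outside the paper. Your analytic route makes the section self-contained, and it gives $\sum_{n<N}e(n^2/N)$ for every $N$. That recovers Proposition~\ref{prop:generalgauss} and (3.3) at $a=1$ directly. The dependence on a general unit coefficient at $p^n$ and $2^\nu$ would still need the paper's recursions, because your reduction to $t=1$ uses that the modulus is prime. Schur's eigenvalue argument is a valid algebraic alternative, but as written it only outlines the multiplicity and Vandermonde-determinant computation.
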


This is Gauss's theorem on the sign of the quadratic Gauss sum, a classical result orthogonal to the methods of this paper; a complete proof is in \cite[Thm.\ 1.5.2]{BerndtEvansWilliams}. Everything else in this section is derived from it by elementary means.

\begin{proposition}[General odd prime power]\label{prop:generalgauss}
For $p$ odd, $(a,p)=1$ and $n\ge1$,
\[
G(a,0,p^n)=\begin{cases}p^{n/2},&n\text{ even},\\[2pt]\Kr ap\,\epsilon_p\,p^{n/2},&n\text{ odd}.\end{cases}
\]
In particular $|G(a,0,p^n)|=p^{n/2}$ exactly, for every $n\ge1$.
\end{proposition}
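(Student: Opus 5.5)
The plan is induction on $n$, with a reduction step that lowers the exponent by two; the base cases are $n=1$, which is Proposition \ref{prop:gauss1} (with $t=a$), and $n=2$, which I compute directly. For $n=2$, write $x=u+pv$ with $u,v\bmod p$. Then $ax^2\equiv au^2+2apuv\pmod{p^2}$, so
\[
G(a,0,p^2)=\sum_{u\bmod p}e_{p^2}(au^2)\sum_{v\bmod p}e_p(2auv).
\]
Since $p$ is odd and $(a,p)=1$, the inner sum is $p\cdot\ind[p\mid u]$. Hence $G(a,0,p^2)=p$, which is $p^{2/2}$ as claimed.

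For $n\ge3$ I will prove the recursion $G(a,0,p^n)=p\,G(a,0,p^{n-2})$. Write $x=u+p^{n-1}v$ with $u\bmod p^{n-1}$ and $v\bmod p$. Then
\[
x^2\equiv u^2+2p^{n-1}uv\pmod{p^n},
\]
because $2(n-1)\ge n$. Summing over $v$ gives the factor $p\cdot\ind[p\mid u]$, again using that $2a$ is a unit mod $p$. So only $u=pw$ with $w\bmod p^{n-2}$ survive. For these terms,
\[
e_{p^n}(ap^2w^2)=e_{p^{n-2}}(aw^2),
\]
and $w$ runs over residues mod $p^{n-2}$ exactly once. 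This gives the recursion, and by induction $G(a,0,p^n)=p^{(n-1)/2}G(a,0,p)$ for odd $n$ and $p^{(n-2)/2}G(a,0,p^2)$ for even $n$. Substituting the base cases yields the stated formula.

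The absolute value statement then follows at once: $|\epsilon_p|=1$, and $\Kr ap=\pm1$ because $(a,p)=1$.

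The only point needing care is the bookkeeping in the decomposition $x=u+p^{n-1}v$. One must check that it is a bijection onto $\Z/p^n\Z$, and that the restriction $u\equiv0\pmod p$ with $u\bmod p^{n-1}$ corresponds exactly to $w\bmod p^{n-2}$, with no multiplicity. Nothing else requires work; in particular the Legendre-symbol dependence enters only through the base case $n=1$.
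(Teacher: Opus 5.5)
Your proposal is correct and follows the paper's proof: the same decomposition $x=u+p^{n-1}v$, the same recursion $G(a,0,p^n)=p\,G(a,0,p^{n-2})$, and induction from Proposition \ref{prop:gauss1}. The only difference is cosmetic. The paper runs the recursion for all $n\ge2$ with the convention $G(a,0,p^0):=1$, whereas you compute $n=2$ directly, which is the same calculation as the recursion at $n=2$.
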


\begin{proof}
We prove the recursion
\begin{equation}
G(a,0,p^n)=p\cdot G(a,0,p^{n-2})\qquad(n\ge2),
\tag{3.2}
\end{equation}
with the convention $G(a,0,p^0):=1$; the proposition follows by induction, using Proposition \ref{prop:gauss1} for $n=1$ and the trivial case $n=0$.

Split $x\bmod p^n$ according to $x\bmod p^{n-1}$: write $x=y+p^{n-1}s$ with $y$ running over representatives of $\Z/p^{n-1}\Z$ and $s$ over $0,\ldots,p-1$, a bijection onto $\Z/p^n\Z$. Since $n\ge2$,
\[
x^2=y^2+2p^{n-1}ys+p^{2n-2}s^2\equiv y^2+2p^{n-1}ys\pmod{p^n},
\]
so $e_{p^n}(ax^2)=e_{p^n}(ay^2)e_p(2ays)$ and
\[
G(a,0,p^n)=\sum_ye_{p^n}(ay^2)\sum_{s=0}^{p-1}e_p(2ays)=p\sum_{\substack{y\bmod p^{n-1}\\p\mid y}}e_{p^n}(ay^2),
\]
using $(2a,p)=1$. Writing $y=py'$ with $y'$ running over $\Z/p^{n-2}\Z$,
\[
G(a,0,p^n)=p\sum_{y'\bmod p^{n-2}}e_{p^n}(ap^2y'^2)=p\sum_{y'\bmod p^{n-2}}e_{p^{n-2}}(ay'^2)=p\,G(a,0,p^{n-2}),
\]
which is (3.2). We verified Proposition \ref{prop:generalgauss} by direct summation for $p\in\{3,5,7\}$ and $n\le5$, agreeing to relative error $\ll10^{-9}$.
\end{proof}

\begin{remark}\label{rmk:e2correction}
The case $n=2$ reads $G(a,0,p^2)=p$ exactly, with \emph{no} Legendre-symbol and no $\epsilon_p$ factor: both disappear at even $n$, consistently with the parity dependence of the Kronecker symbol noted in Section \ref{sec:kronecker}. Note also that $\epsilon_p^{\,2}=\Kr{-1}p$, which is $-1$ for $p\equiv3\ (4)$; thus $G(a,0,p)^2=\Kr{-1}p\,p$, which differs from $G(a,0,p^2)=p$ for half of all primes. The passage from $p$ to $p^2$ is the recursion (3.2), not a squaring.
\end{remark}

\begin{lemma}[Completing the square, odd modulus]\label{lem:completesquare}
Let $c=p^e$ with $p$ odd, $(a,p)=1$, $b\in\Z$. Then
\[
G(a,b,c)=e_c\big({-}\overline{4a}\,b^2\big)\,G(a,0,c),
\]
where $\overline{4a}$ is the inverse of $4a$ modulo $c$.
\end{lemma}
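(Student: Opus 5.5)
The identity follows by completing the square inside the summation and then substituting a shifted variable. Since $p$ is odd and $(a,p)=1$, the element $4a$ is invertible modulo $c=p^e$, and we write $\overline{4a}$ for its inverse. The starting point is the congruence
\[
4a\,(an^2+bn)=(2an+b)^2-b^2 ,
\]
which, after multiplying by $\overline{4a}$, gives
\[
an^2+bn\equiv \overline{4a}\,(2an+b)^2-\overline{4a}\,b^2\pmod c .
\]
Here we use that $\overline{4a}\cdot 4a\equiv1\pmod c$.

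Next we substitute $y:=2an+b$. Because $(2a,c)=1$, the affine map $n\mapsto 2an+b$ is a bijection of $\Z/c\Z$, so
\[
G(a,b,c)=e_c\big({-}\overline{4a}\,b^2\big)\sum_{y\bmod c}e_c\big(\overline{4a}\,y^2\big).
\]
It remains to identify the last sum with $G(a,0,c)$. Since $\overline{4a}\equiv \bar a\cdot\overline{2}^{\,2}$, the further substitution $y=2a\,u$ gives $\overline{4a}\,y^2\equiv a u^2\pmod c$. This substitution is again a bijection because $(2a,c)=1$. Hence
\[
\sum_{y\bmod c}e_c\big(\overline{4a}\,y^2\big)=\sum_{u\bmod c}e_c(au^2)=G(a,0,c),
\]
which completes the argument.

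There is no real obstacle here. The only point needing care is that every inversion and every change of variables is legitimate modulo $p^e$, and this is precisely where the hypotheses $p$ odd and $(a,p)=1$ are used. Nothing from Proposition \ref{prop:generalgauss} is needed; the lemma is purely algebraic.
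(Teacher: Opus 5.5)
Your proof is correct and is essentially the paper's argument: composing your two substitutions $y=2an+b$ and $y=2au$ gives the translation $u\equiv n+\overline{2a}\,b \pmod c$, which is exactly the shift $n\mapsto n-\overline{2a}\,b$ the paper uses to complete the square. The paper simply does this in one step, checking $a(n-x)^2+b(n-x)\equiv an^2-\overline{4a}\,b^2\pmod c$ directly with $x=\overline{2a}\,b$.
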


\begin{proof}
Since $2$ and $a$ are invertible modulo $c$, the substitution $n\mapsto n-\overline{2a}\,b$ is a bijection of $\Z/c\Z$, and with $x:=\overline{2a}\,b$ one has $a(n-x)^2+b(n-x)=an^2-(2ax-b)n+(ax^2-bx)\equiv an^2-ax^2\equiv an^2-\overline{4a}\,b^2\pmod c$, using $2ax\equiv b$ and $ax^2=x\cdot ax\equiv x\cdot\bar2b=\overline{4a}\,b^2$; summing over $n$ gives the claim.
\end{proof}

\subsection{Powers of \texorpdfstring{$2$}{2}}\label{sec:gauss2}

The prime $2$ is usually handled by an appeal to tables. Since the character structure at $2$ is exactly what decides which instances of Hypothesis \ref{hyp:R*} we are allowed to invoke in Section \ref{sec:shortbound}, we prove what we need.

\begin{lemma}[Quadratic Gauss sums at powers of $2$]\label{lem:twoadic}
Let $\nu\ge1$ and let $a$ be odd. Then:
\begin{itemize}
\item[(i)] $G(a,0,2)=0$, and for $\nu\ge2$, $G(a,b,2^\nu)=0$ whenever $b$ is odd; also $G(a,b,2)=0$ whenever $b$ is even.
\item[(ii)] If $b=2b_0$ and $\nu\ge1$, then $G(a,b,2^\nu)=e_{2^\nu}\!\big({-}\bar a\,b_0^2\big)\,G(a,0,2^\nu)$, where $\bar a$ is the inverse of $a$ modulo $2^\nu$.
\item[(iii)] For $\nu\ge2$,
\begin{equation}
G(a,0,2^\nu)=2^{(\nu+1)/2}\,\Kr a2^{\,\nu+1}\,e_8(a)=\begin{cases}2^{\nu/2}\big(1+i^a\big),&\nu\text{ even},\\[2pt]2^{(\nu+1)/2}e_8(a),&\nu\text{ odd}.\end{cases}
\tag{3.3}
\end{equation}
In particular $|G(a,0,2^\nu)|=2^{(\nu+1)/2}$ for $\nu\ge2$.
\end{itemize}
\end{lemma}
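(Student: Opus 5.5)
The three parts are elementary, and I would prove them in the order (i), (ii), (iii), with (iii) by a recursion in $\nu$ modelled on (3.2).

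For (i), I would pair $n$ with $n+2^{\nu-1}$. One has $a(n+2^{\nu-1})^2+b(n+2^{\nu-1})=an^2+bn+a2^\nu n+a2^{2\nu-2}+b2^{\nu-1}$. For $\nu\ge2$ we have $2\nu-2\ge\nu$, so modulo $2^\nu$ this equals $an^2+bn+b2^{\nu-1}$. The summand is therefore multiplied by $e_2(b)=-1$ when $b$ is odd, and the map $n\mapsto n+2^{\nu-1}$ is an involution without fixed points, so $G=0$. For $\nu=1$ one computes directly: $G(a,b,2)=1+e_2(a+b)$, which vanishes exactly when $a+b$ is odd, that is, when $b$ is even. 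Taking $b=0$ gives $G(a,0,2)=0$.

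For (ii), I would complete the square using only $a$ invertible modulo $2^\nu$. Substitute $n\mapsto n-\bar a b_0$; this is a bijection of $\Z/2^\nu\Z$. Then
\[
a(n-\bar ab_0)^2+2b_0(n-\bar ab_0)\equiv an^2-2b_0n+\bar ab_0^2+2b_0n-2\bar ab_0^2=an^2-\bar ab_0^2 \pmod{2^\nu}.
\]
No division by $2$ is needed, so this holds for every $\nu\ge1$, including $\nu=1$, where both sides vanish by (i).

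For (iii), I would prove the recursion $G(a,0,2^\nu)=2\,G(a,0,2^{\nu-2})$ for $\nu\ge4$, exactly as in (3.2). Write $x=y+2^{\nu-1}s$ with $s\in\{0,1\}$; then $x^2\equiv y^2+2^\nu ys\equiv y^2 \pmod{2^\nu}$, so the $s$-sum simply doubles and gives no restriction. I would instead split $x=y+2^{\nu-2}s$ with $s\bmod 4$. Then $x^2=y^2+2^{\nu-1}ys+2^{2\nu-4}s^2$, and for $\nu\ge4$ the last term is $\equiv0$. The $s$-sum is $\sum_{s\bmod 4}e_2(ays)$, which is $4$ for $y$ even and $0$ for $y$ odd. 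Writing $y=2y'$ with $y'$ modulo $2^{\nu-3}$ gives
\[
G(a,0,2^\nu)=4\sum_{y'\bmod 2^{\nu-3}}e_{2^{\nu-2}}(ay'^2)=2\,G(a,0,2^{\nu-2}),
\]
since the summand is $2^{\nu-3}$-periodic while the full sum runs over $2^{\nu-2}$ terms, a factor $2$.

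The base cases are computed by hand. For $\nu=2$, $G(a,0,4)=1+i^a+1+i^a=2(1+i^a)$, using $e_4(4a)=1$ and $e_4(9a)=i^a$. For $\nu=3$, the squares modulo $8$ are $0,1,4,1,0,1,4,1$, so $G(a,0,8)=2+4e_8(a)+2e_8(4a)=4e_8(a)$. Induction then gives both lines of (3.3).

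The only real care is matching (3.3) to the middle expression $2^{(\nu+1)/2}\Kr a2^{\nu+1}e_8(a)$. For odd $\nu$ the character factor is trivial, since $\nu+1$ is even. For even $\nu$ I must check that $1+i^a=\sqrt2\,\Kr a2 e_8(a)$ for each odd residue $a\bmod 8$. This is a four-case verification: for example, $a=3$ gives $1-i=\sqrt2\,e_8(-1)=-\sqrt2\,e_8(3)$, consistent with $\Kr32=-1$. The modulus claim $|G(a,0,2^\nu)|=2^{(\nu+1)/2}$ is then immediate. The expected obstacle is purely bookkeeping: the recursion must step by $2$ using the splitting modulo $2^{\nu-2}$, not $2^{\nu-1}$, and this is why it requires $\nu\ge4$ with $\nu=2,3$ as bases.
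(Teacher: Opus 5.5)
Your proof is correct. For (i), (ii), the base cases $\nu=2,3$ and the four-case check of (3.4), it coincides with the paper's argument.

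The one genuine difference is how you derive $G(a,0,2^\nu)=2\,G(a,0,2^{\nu-2})$ for $\nu\ge4$.

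\textbf{The paper's route.} It splits $x\bmod 2^\nu$ into even and odd residues. The even ones give $2S_{\nu-2}$ directly. For the odd ones it uses the fact that, for $\nu\ge3$, the squares of the odd residues modulo $2^\nu$ are exactly the classes $u\equiv1\pmod 8$, each attained four times. Their contribution is therefore $4e_{2^\nu}(a)\sum_{w\bmod 2^{\nu-3}}e_{2^{\nu-3}}(aw)$, which vanishes for $\nu\ge4$.

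\textbf{Your route.} You lift along $x=y+2^{\nu-2}s$ with $s\bmod 4$. The odd $x$ then cancel in groups of four, since $\sum_{s\bmod 4}e_2(ays)=0$ for odd $y$. The even ones collapse to $2S_{\nu-2}$ by the $2^{\nu-3}$-periodicity of $y'\mapsto e_{2^{\nu-2}}(ay'^2)$, which indeed requires $\nu\ge4$.

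\textbf{Comparison.} Your version is the more self-contained one. It needs no information about the image of the squaring map on $(\Z/2^\nu\Z)^\times$, which the paper uses as a known fact without proof. As you observe, it is also the faithful $2$-adic analogue of the recursion (3.2), once the splitting is taken modulo $2^{\nu-2}$ rather than $2^{\nu-1}$. The paper's version has the minor advantage that $\nu=3$ falls out of the same computation, as $S_3=2S_1+4e_8(a)$ with $S_1=0$ from (i). You instead need a separate table of squares modulo $8$.
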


\begin{proof}
(i) For $\nu\ge2$ the substitution $x\mapsto x+2^{\nu-1}$ permutes $\Z/2^\nu\Z$ and sends $ax^2+bx$ to
\[
ax^2+bx+a2^\nu x+a2^{2\nu-2}+b2^{\nu-1}\equiv ax^2+bx+b2^{\nu-1}\pmod{2^\nu},
\]
using $2\nu-2\ge\nu$; hence $G=(-1)^bG$, which forces $G=0$ for odd $b$. For $\nu=1$, $G(a,b,2)=1+(-1)^{a+b}$, which vanishes for $b$ even since $a$ is odd; in particular $G(a,0,2)=0$.

(ii) As $a$ is invertible modulo $2^\nu$, the substitution $x\mapsto x-\bar ab_0$ is a bijection of $\Z/2^\nu\Z$ and
\[
a(x-\bar ab_0)^2+2b_0(x-\bar ab_0)\equiv ax^2-\bar ab_0^2\pmod{2^\nu}
\]
by expanding and using $a\bar a\equiv1$. Summing over $x$ gives the claim. (This is the exact analogue of Lemma \ref{lem:completesquare}: the inverse of $4a$, which does not exist modulo $2^\nu$, is replaced by the inverse of $a$ together with the halving $b=2b_0$.)

(iii) Write $S_\nu:=G(a,0,2^\nu)$. Split $x\bmod2^\nu$ into even and odd. The even $x=2y$, with $y$ running over $\Z/2^{\nu-1}\Z$, contribute
\[
\sum_{y\bmod2^{\nu-1}}e_{2^\nu}(4ay^2)=\sum_{y\bmod2^{\nu-1}}e_{2^{\nu-2}}(ay^2)=2\,S_{\nu-2}\qquad(\nu\ge2),
\]
since $y\bmod2^{\nu-1}$ covers each class modulo $2^{\nu-2}$ exactly twice. For the odd $x$: for $\nu\ge3$ the squares of the $2^{\nu-1}$ odd residues modulo $2^\nu$ are exactly the $2^{\nu-3}$ classes $u\equiv1\ (8)$, each attained $4$ times, so the odd $x$ contribute
\[
4\sum_{\substack{u\bmod2^\nu\\u\equiv1\ (8)}}e_{2^\nu}(au)=4\,e_{2^\nu}(a)\sum_{w\bmod2^{\nu-3}}e_{2^{\nu-3}}(aw)=4\,e_{2^\nu}(a)\,2^{\nu-3}\,\ind\big[2^{\nu-3}\mid a\big],
\]
which vanishes for $\nu\ge4$ ($a$ being odd) and equals $4e_8(a)$ for $\nu=3$. Hence
\[
S_\nu=2S_{\nu-2}\quad(\nu\ge4),\qquad S_3=2S_1+4e_8(a)=4e_8(a),\qquad S_2=\sum_{x\bmod4}e_4(ax^2)=2\big(1+i^a\big),
\]
using $S_1=G(a,0,2)=0$ from (i) and evaluating $S_2$ directly. Induction gives $S_\nu=2^{(\nu-2)/2}S_2=2^{\nu/2}(1+i^a)$ for even $\nu\ge2$ and $S_\nu=2^{(\nu-3)/2}S_3=2^{(\nu+1)/2}e_8(a)$ for odd $\nu\ge3$. Finally, for odd $a$ one checks on the four classes $a\bmod8$ that
\begin{equation}
1+i^a=\sqrt2\,\Kr a2\,e_8(a),
\tag{3.4}
\end{equation}
which converts the even-$\nu$ formula into $2^{(\nu+1)/2}\Kr a2e_8(a)=2^{(\nu+1)/2}\Kr a2^{\,\nu+1}e_8(a)$ (as $\nu$ is even), and leaves the odd-$\nu$ formula as $2^{(\nu+1)/2}\Kr a2^{\,\nu+1}e_8(a)$ (as $\Kr a2^{\,\nu+1}=1$ for $\nu$ odd). This is (3.3), and $|G|=2^{(\nu+1)/2}$ follows since $|e_8(a)|=1$.

We verified (3.3) against direct summation for every $2\le\nu\le13$ and every odd $a\le63$, with worst absolute error $1.4\times10^{-8}$ (floating-point summation of up to $2^{13}$ terms); parts (i) and (ii) were verified exhaustively for $1\le\nu\le9$ and all $a,b$, with worst deviations $4.6\times10^{-14}$ and $1.3\times10^{-13}$.
\end{proof}

The shape of (3.3) is what we shall need, so we isolate it; recall the notation $m^*$ of (2.1). Define, for $\nu\ge0$, the function $\vartheta_\nu$ by $\vartheta_0\equiv1$ on all integers and, on odd integers,
\begin{equation}
\vartheta_1(c):=\Kr c2,\qquad \vartheta_\nu(c):=\frac{1+i^c}{\sqrt2}\quad(\nu\ge2).
\tag{3.5}
\end{equation}
Each $\vartheta_\nu$ has modulus $1$ and, for $\nu\ge1$, depends only on $c\bmod8$. By (3.4), $\vartheta_\nu(c)=\Kr c2\,e_8(c)$ for $\nu\ge2$, so (3.3) says precisely
\begin{equation}
2^{-(\nu+1)/2}G(c,0,2^\nu)=\Kr c{2^\nu}\,\vartheta_\nu(c)\qquad(\nu\ge2),
\tag{3.6}
\end{equation}
using $\Kr c{2^\nu}=\Kr c2^{\,\nu}$. The point of writing it this way is the following elementary but decisive observation, which will be used in Section \ref{sec:shortbound}.

\begin{lemma}\label{lem:thetachars}
Write $\chi_{-4}(c):=(-1)^{(c-1)/2}$ for the real primitive character of conductor $4$ and $\chi_8(c):=\Kr c2$ for the real primitive character of conductor $8$ with $\chi_8(-1)=1$ (both vanishing at even $c$). Let $m\in\N$ and write $m=2^\nu m_{\mathrm{odd}}$, and set
\begin{equation}
\Xi_m(c):=\Kr cm\,\vartheta_\nu(c).
\tag{3.7}
\end{equation}
(for $\nu\ge1$ the factor $\vartheta_\nu(c)$ is defined only for odd $c$, and then $m$ is even, so we set $\Xi_m(c):=0$ for even $c$, consistently with $\Kr cm=0$ there). Then there are complex numbers $\lambda_1,\lambda_2$ with $|\lambda_1|+|\lambda_2|\le2$ and characters $\psi_1,\psi_2\in\{\mathbf1,\chi_{-4},\chi_8\}$ with
\[
\Xi_m=\lambda_1\Kr\cdot m\psi_1+\lambda_2\Kr\cdot m\psi_2 ,
\]
and each of $\Kr\cdot m\psi_1$, $\Kr\cdot m\psi_2$ is a Dirichlet character whose conductor divides $m$ (when $\lambda_2=0$ we take $\psi_2:=\psi_1$). Explicitly:
\[
\Xi_m=\begin{cases}\Kr\cdot m,&\nu=0,\\[2pt]\Kr\cdot m\chi_8=\Kr\cdot{m_{\mathrm{odd}}},&\nu=1,\\[2pt]\tfrac1{\sqrt2}\Kr\cdot m+\tfrac i{\sqrt2}\Kr\cdot m\chi_{-4},&\nu\ge2.\end{cases}
\]
\end{lemma}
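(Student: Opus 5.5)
The plan is a direct case analysis on $\nu=v_2(m)$. The identities will be verified pointwise on odd and even $c$ separately. Then the Dirichlet-character and conductor claims will be checked from the periodicity of the factors described in Section \ref{sec:kronecker}.

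\emph{Case $\nu=0$.} Here $\vartheta_0\equiv1$, so $\Xi_m=\Kr\cdot m$. Take $\psi_1=\psi_2=\mathbf1$, $\lambda_1=1$ and $\lambda_2=0$. Since $m$ is odd, $\Kr\cdot m$ is the Jacobi symbol, which is a Dirichlet character modulo $m$; in particular its conductor divides $m$.

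\emph{Case $\nu=1$.} On odd $c$ we have $\vartheta_1(c)=\Kr c2=\chi_8(c)$. Hence $\Xi_m(c)=\Kr c2\Kr c{m_{\mathrm{odd}}}\chi_8(c)=\chi_8(c)^2\Kr c{m_{\mathrm{odd}}}=\Kr c{m_{\mathrm{odd}}}$ for odd $c$. Both sides vanish at even $c$ (the right side because $c$ is even and $m$ is even, so $\Kr cm\chi_8=0$). We must be careful to use the version $\Kr\cdot m\chi_8$, not the bare $\Kr\cdot{m_{\mathrm{odd}}}$, because the latter does not vanish at even $c$ coprime to $m_{\mathrm{odd}}$. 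The function $\Kr\cdot m\chi_8$ equals $\Kr\cdot{m_{\mathrm{odd}}}\ind[c\text{ odd}]$. This is the Jacobi character modulo $m_{\mathrm{odd}}$ times the principal character modulo $2$, so it is a Dirichlet character modulo $2m_{\mathrm{odd}}=m$. This is exactly the point where the non-periodicity of $\Kr\cdot m$ for $m\equiv2\ (4)$ is cured. So we take $\psi_1=\psi_2=\chi_8$, $\lambda_1=1$, $\lambda_2=0$.

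\emph{Case $\nu\ge2$.} For odd $c$ we have $i^c=i\cdot i^{c-1}=i\,\chi_{-4}(c)$. Hence $\vartheta_\nu(c)=\tfrac1{\sqrt2}+\tfrac i{\sqrt2}\chi_{-4}(c)$, which gives the stated decomposition on odd $c$ with $|\lambda_1|+|\lambda_2|=\sqrt2\le2$. At even $c$ both sides vanish, since $\Kr cm=0$. For the character claim, write $\Kr\cdot m=\Kr\cdot2^{\nu}\Kr\cdot{m_{\mathrm{odd}}}$. The factor $\Kr\cdot2^{\nu}$ is the principal character modulo $2$ when $\nu$ is even, and $\chi_8$ when $\nu$ is odd. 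Therefore $\Kr\cdot m\psi$ for $\psi\in\{\mathbf1,\chi_{-4}\}$ is a product of the Jacobi character modulo $m_{\mathrm{odd}}$ and a character of conductor dividing $8$: one of $\mathbf1,\chi_{-4},\chi_8,\chi_8\chi_{-4}$, with conductor $1,4,8,8$ respectively. The only constraint is therefore when a conductor-$8$ factor appears. That happens only when $\nu$ is odd, in which case $\nu\ge3$ and $8\mid2^\nu$. When $\nu$ is even, the factor is $\mathbf1$ or $\chi_{-4}$, of conductor dividing $4\mid2^\nu$. So in all cases the conductor divides $2^\nu m_{\mathrm{odd}}=m$.

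The main (mild) obstacle is this conductor bookkeeping at $\nu\ge2$. One has to check that $\chi_8$ never arises when $\nu=2$, and that the $\nu=1$ case genuinely yields a character modulo $m$. Everything else is the one-line identities $\chi_8^2=\ind[\text{odd}]$ and $i^c=i\chi_{-4}(c)$.
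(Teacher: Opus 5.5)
Your proof is correct and follows the paper's argument essentially step for step. It uses the same case split on $\nu$, the same identities $i^c=i\chi_{-4}(c)$ and $\chi_8(c)^2=1$ for odd $c$, and the same conductor bookkeeping via $\Kr{\cdot}{m}=\Kr{\cdot}{2}^{\nu}\Kr{\cdot}{m_{\mathrm{odd}}}$.

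The only difference is cosmetic. The paper names the exact conductors ($m^*_{\mathrm{odd}}$, $4m^*_{\mathrm{odd}}$, or a divisor of $8m^*_{\mathrm{odd}}$). You only show that each product is a character modulo $\mathrm{lcm}(m_{\mathrm{odd}},2^\nu)=m$, which is all the lemma claims.

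Your caution in the case $\nu=1$ is right: $\Kr{\cdot}{m}\chi_8$ agrees with $\Kr{\cdot}{m_{\mathrm{odd}}}$ only on odd $c$. The paper's own proof likewise states that identity only for odd $c$.
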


\begin{proof}
For $\nu\ge2$ and odd $c$ one has $i^c=i$ if $c\equiv1\ (4)$ and $i^c=i^3=-i$ if $c\equiv3\ (4)$, i.e.\ $i^c=i\chi_{-4}(c)$, so $\vartheta_\nu(c)=(1+i\chi_{-4}(c))/\sqrt2$; the cases $\nu\le1$ are (3.5). For $\nu=1$, $\Kr cm\chi_8(c)=\Kr c2^{\,2}\Kr c{m_{\mathrm{odd}}}=\Kr c{m_{\mathrm{odd}}}$ for odd $c$.

For the conductors, recall $\Kr\cdot m=\Kr\cdot2^{\,\nu}\Kr\cdot{m^*_{\mathrm{odd}}}$, that $\Kr\cdot{m^*_{\mathrm{odd}}}$ is primitive of conductor $m^*_{\mathrm{odd}}\mid m$, and that $\Kr\cdot2^{\,\nu}$ is principal for even $\nu$ and equals $\chi_8$ for odd $\nu$. Thus: for $\nu=0$ the conductor is $m^*_{\mathrm{odd}}\mid m$; for $\nu=1$ the character is $\Kr\cdot{m_{\mathrm{odd}}}$, of conductor $m^*_{\mathrm{odd}}\mid m$; for even $\nu\ge2$ the two characters are $\Kr\cdot{m^*_{\mathrm{odd}}}$ and $\Kr\cdot{m^*_{\mathrm{odd}}}\chi_{-4}$, of conductors $m^*_{\mathrm{odd}}$ and $4m^*_{\mathrm{odd}}$, both dividing $m$ since $4\mid2^\nu$; and for odd $\nu\ge3$ they are $\chi_8\Kr\cdot{m^*_{\mathrm{odd}}}$ and $\chi_8\chi_{-4}\Kr\cdot{m^*_{\mathrm{odd}}}$, of conductors dividing $8m^*_{\mathrm{odd}}\mid m$ since $8\mid2^\nu$.
\end{proof}

\begin{remark}[The Kronecker symbol is not always periodic to its own modulus]\label{rem:notperiodic}
The conductor assertion of Lemma \ref{lem:thetachars} is not a formality, and it is worth isolating what makes it true. The function $n\mapsto\Kr nm$ is in general \emph{not} periodic modulo $m$: as noted in Section \ref{sec:kronecker}, it is periodic modulo $m$ if and only if $m\not\equiv2\pmod4$, and for $m\equiv2\ (4)$ only modulo $4m$. The obstruction is the factor $\Kr n2=\chi_8(n)$, of conductor $8$. Concretely, $\Kr{-1}6=-1$ while $\Kr56=+1$, although $-1\equiv5\pmod6$.

The point is that the function which actually occurs in Proposition \ref{prop:classeval} is not $\Kr\cdot m$ but $\Xi_m=\Kr\cdot m\vartheta_\nu$, and $\vartheta_\nu$ repairs exactly this defect. The only bad case is $\nu=1$, i.e.\ $m\equiv2\ (4)$, and there $\vartheta_1=\chi_8$ cancels the offending factor outright: $\Xi_m=\Kr\cdot{m_{\mathrm{odd}}}$. For $\nu=0$ there is no factor $\Kr\cdot2$ to begin with; for $\nu\ge2$ the conductors $4$ and $8$ that $\chi_{-4}$ and $\chi_8$ contribute divide $2^\nu$. So the $2$-adic evaluation (3.3) is doing genuine work here: it guarantees that the object produced by the Gauss-sum evaluation is a Dirichlet character to the modulus $m$ at which it is produced, so that the sums in which it occurs (Proposition \ref{prop:classdecomp}) are sums of Sali\'e type in the proper sense, and it is what ensures that Hypothesis \ref{hyp:R*} is only ever invoked at a modulus where the Kronecker symbol is a character (Remark \ref{rem:mod4}, Remark \ref{rem:whichchars}). We note for clarity that the identities of Section \ref{sec:bilinear} are identities between functions of an integer variable and hold as such; the conductor assertion is what makes them identities between functions on $\Z/m\Z$, and the modulus $8m$ in Lemma \ref{lem:shortsalie} is where the conductor $8$ is paid for.
\end{remark}

\begin{remark}\label{rem:thetachars}
The content of Lemma \ref{lem:thetachars} is that the characters produced by the Gauss-sum evaluation are pinned down exactly: each is $\Kr\cdot m$ times one of the three fixed characters $\mathbf1,\chi_{-4},\chi_8$. A cruder bookkeeping -- $\vartheta_\nu$ is a function on $(\Z/8)^\times$ and so expands in the four characters modulo $8$ -- would leave four unknown characters in play and would force one to assume a hypothesis of Hooley's type for a \emph{general} Dirichlet character. Hypothesis \ref{hyp:R*} concerns the Kronecker symbol specifically, and Lemma \ref{lem:shortsalie} below derives everything that is needed from it \emph{as stated}; Remark \ref{rem:whichchars} explains how the three characters modulo $8$ are absorbed.
\end{remark}

\subsection{Degenerate coefficients, and the evaluation on a divisor class}\label{sec:gaussdegen}

\begin{lemma}[Exact reduction of a degenerate Gauss sum]\label{lem:degauss}
Let $c=p^e$ ($p$ any prime), $a=p^\alpha a_0$ with $(a_0,p)=1$ and $0\le\alpha<e$. Then $G(a,b,p^e)=0$ unless $p^\alpha\mid b$, and if $b=p^\alpha b_0$ then
\[
G(a,b,p^e)=p^\alpha\,G\big(a_0,b_0,p^{e-\alpha}\big).
\]
If instead $\alpha\ge e$, then $G(a,b,p^e)=p^e\,\ind[p^e\mid b]$.
\end{lemma}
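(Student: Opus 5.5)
The plan is to handle the case $\alpha\ge e$ first, since it is immediate. If $p^e\mid a$, then $an^2\equiv0\pmod{p^e}$ for every $n$. Hence
\[
G(a,b,p^e)=\sum_{n\bmod p^e}e_{p^e}(bn)=p^e\,\ind[p^e\mid b]
\]
by orthogonality of additive characters.

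For $0\le\alpha<e$, I would split $n\bmod p^e$ according to its residue modulo $p^{e-\alpha}$. Write $n=y+p^{e-\alpha}s$, with $y$ running over representatives of $\Z/p^{e-\alpha}\Z$ and $s$ over $0,\dots,p^\alpha-1$; this is a bijection onto $\Z/p^e\Z$. Then
\[
an^2=p^\alpha a_0\big(y^2+2p^{e-\alpha}ys+p^{2(e-\alpha)}s^2\big)\equiv ay^2\pmod{p^e},
\]
because each correction term carries a factor $p^{\alpha+e-\alpha}=p^e$. Only the linear term depends on $s$: $bn=by+bp^{e-\alpha}s$. This gives
\[
G(a,b,p^e)=\sum_{y}e_{p^e}(ay^2+by)\sum_{s\bmod p^\alpha}e_{p^\alpha}(bs).
\]
The inner sum equals $p^\alpha\ind[p^\alpha\mid b]$. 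So $G$ vanishes unless $p^\alpha\mid b$.

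When $b=p^\alpha b_0$, the phase becomes $e_{p^e}(p^\alpha(a_0y^2+b_0y))=e_{p^{e-\alpha}}(a_0y^2+b_0y)$. Summing over $y\bmod p^{e-\alpha}$ then gives exactly $p^\alpha G(a_0,b_0,p^{e-\alpha})$. One should check that the summand is well defined modulo $p^{e-\alpha}$ in $y$; it is, since the exponent now has modulus $p^{e-\alpha}$.

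There is no real obstacle here; the only point needing care is the bookkeeping of exponents. The quadratic cross term $2ap^{e-\alpha}ys$ is killed by $p^\alpha\mid a$ itself, and not by any divisibility of $2$, so the argument works uniformly for $p=2$. This differs from the recursion (3.2), where $2a$ had to be a unit.
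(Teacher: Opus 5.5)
Your proposal is correct and takes the same route as the paper. The paper also treats $\alpha\ge e$ as a geometric sum, and for $\alpha<e$ splits $x=y+p^{e-\alpha}s$ so that $ax^2\equiv ay^2\pmod{p^e}$, evaluates the $s$-sum as $p^\alpha\,\ind[p^\alpha\mid b]$, and rescales the $y$-sum to $G(a_0,b_0,p^{e-\alpha})$; as you note, nothing there requires $2$ to be invertible, so $p=2$ is covered uniformly.
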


\begin{proof}
For $\alpha\ge e$ the quadratic term vanishes identically and $G$ is a geometric sum. For $\alpha<e$, split $x\bmod p^e$ according to $x\bmod p^{e-\alpha}$: write $x=y+p^{e-\alpha}s$ with $y$ over representatives of $\Z/p^{e-\alpha}\Z$ and $s$ over $0,\ldots,p^\alpha-1$. Since $a=p^\alpha a_0$, the cross term $2ap^{e-\alpha}ys=2p^ea_0ys$ vanishes modulo $p^e$ and the term $ap^{2(e-\alpha)}s^2$ has $p$-valuation $\alpha+2(e-\alpha)\ge e$, so $ax^2\equiv ay^2\ (p^e)$, independently of $s$. The linear term contributes $e_{p^e}(by)e_{p^\alpha}(bs)$, and summing over $s$ gives $p^\alpha\ind[p^\alpha\mid b]$. When $p^\alpha\mid b$,
\[
\sum_{y\bmod p^{e-\alpha}}e_{p^e}\big(p^\alpha a_0y^2+p^\alpha b_0y\big)=\sum_{y\bmod p^{e-\alpha}}e_{p^{e-\alpha}}\big(a_0y^2+b_0y\big)=G\big(a_0,b_0,p^{e-\alpha}\big).
\qedhere
\]
\end{proof}

We verified Lemma \ref{lem:degauss} exhaustively for $p\in\{2,3,5\}$, $e\in\{2,3,4\}$, every $0\le\alpha<e$, every unit $a_0$, and every $b\bmod p^e$, with zero discrepancies.

\begin{corollary}[Vanishing criterion at every modulus]\label{cor:vanishgeneral}
For every $c\in\N$ and $a,b\in\Z$, $G(a,b,c)=0$ unless $(a,c)\mid b$. Moreover $|G(a,b,c)|\le\sqrt2\,\sqrt{(a,c)\,c}$ always, with $|G(a,b,c)|=\sqrt{(a,c)c}$ when $c$ is odd and $G(a,b,c)\ne0$.
\end{corollary}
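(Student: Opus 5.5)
The plan is to reduce to prime powers via the CRT-multiplicativity (3.1) and then use Lemma \ref{lem:degauss} together with the exact evaluations of Sections \ref{sec:gaussodd}--\ref{sec:gauss2}.

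\emph{Reduction to prime powers.} Write $c=\prod_p p^{e_p}$. Iterating (3.1) expresses $G(a,b,c)$ as a product of factors $G(a u_p,b,p^{e_p})$, where $u_p=c/p^{e_p}$ is coprime to $p$. Multiplying $a$ by the unit $u_p$ does not change $v_p(a)$, and it does not change $(a,p^{e_p})=p^{\min(v_p(a),e_p)}$. The product of the local gcds is $(a,c)$, and the product of the local moduli is $c$. So it suffices to prove, for each prime power $p^e$ and each $a'$ with $v_p(a')=v_p(a)$, three things: vanishing unless $(a',p^e)\mid b$; the bound $|G|\le\sqrt{(a',p^e)p^e}$ for odd $p$, with equality when $G\ne0$; and the bound $|G|\le\sqrt2\sqrt{(a',2^e)2^e}$ for $p=2$. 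Since at most one prime factor is $2$, only one factor $\sqrt2$ appears. If $c$ is odd and $G\ne0$, every local factor is nonzero, so every local factor attains equality and the product gives equality.

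\emph{Local analysis.} Let $\alpha=v_p(a')$. If $\alpha\ge e$, Lemma \ref{lem:degauss} gives $G=p^e\ind[p^e\mid b]$. This vanishes unless $(a',p^e)=p^e$ divides $b$, and otherwise $|G|=p^e=\sqrt{p^e\cdot p^e}$, which is equality. If $\alpha<e$, Lemma \ref{lem:degauss} gives vanishing unless $p^\alpha\mid b$, and otherwise $G=p^\alpha G(a_0,b_0,p^{e-\alpha})$ with $a_0$ a unit. It remains to show that $|G(a_0,b_0,p^n)|$ equals $p^{n/2}$ for odd $p$ and is at most $\sqrt2\,2^{n/2}$ for $p=2$, where $n=e-\alpha\ge1$. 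Multiplying by $p^\alpha$ then gives $\sqrt{p^{\alpha}p^{e}}$, or $\sqrt2$ times it.

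\emph{The unit-coefficient bounds.} For odd $p$, Lemma \ref{lem:completesquare} gives $|G(a_0,b_0,p^n)|=|G(a_0,0,p^n)|$, and Proposition \ref{prop:generalgauss} gives $|G(a_0,0,p^n)|=p^{n/2}$ exactly. For $p=2$ we use Lemma \ref{lem:twoadic}. When $n=1$, $|G(a_0,b_0,2)|=|1+(-1)^{a_0+b_0}|\le2=\sqrt2\cdot2^{1/2}$. When $n\ge2$, $G$ vanishes for odd $b_0$ by part (i). For even $b_0$, part (ii) gives $|G|=|G(a_0,0,2^n)|$, and part (iii) gives $|G(a_0,0,2^n)|=2^{(n+1)/2}=\sqrt2\cdot2^{n/2}$.

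No step is a genuine obstacle. The only points needing care are, first, that the twist $a\mapsto au_p$ in (3.1) preserves the local gcd, and second, that the $2$-adic factor is where the constant $\sqrt2$ is lost and where equality can fail. This is why the equality claim is restricted to odd $c$.
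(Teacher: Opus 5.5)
Your proposal is correct and follows the paper's proof essentially step for step. You factor $G(a,b,c)$ over prime powers via (3.1), noting that the twist $a\mapsto a\,c\,p^{-e}$ preserves $(a,p^e)$, apply Lemma \ref{lem:degauss} locally, and evaluate the unit-coefficient sums by Lemma \ref{lem:completesquare} with Proposition \ref{prop:generalgauss} for odd $p$ and by Lemma \ref{lem:twoadic} for $p=2$; the paper does the same, except that it states explicitly that the local conditions $(a,p^e)\mid b$ together amount to $(a,c)\mid b$, while you are more explicit about why equality holds for odd $c$.
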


\begin{proof}
By (3.1), $G(a,b,c)=\prod_{p^{e}\|c}G(a\,c\,p^{-e},b,p^{e})$, and $(a\,cp^{-e},p^e)=(a,p^e)$. By Lemma \ref{lem:degauss} each factor vanishes unless $(a,p^e)\mid b$, and the conditions $(a,p^e)\mid b$ for all $p$ are together equivalent to $(a,c)\mid b$. For the magnitude at a factor with $\alpha:=v_p((a,p^e))<e$: Lemma \ref{lem:degauss} reduces to a unit coefficient at the modulus $p^{e-\alpha}$, and then Lemma \ref{lem:completesquare} with Proposition \ref{prop:generalgauss} (for odd $p$), or Lemma \ref{lem:twoadic}(ii)--(iii) (for $p=2$, $e-\alpha\ge2$), or Lemma \ref{lem:twoadic}(i) (for $p=2$, $e-\alpha=1$, where $G(a_0,b_0,2)\in\{0,2\}$) gives $|G|=p^{\alpha}p^{(e-\alpha)/2}=\sqrt{p^\alpha p^e}$, respectively $|G|\le\sqrt2\sqrt{p^\alpha p^e}$, with equality in the last case; for $\alpha\ge e$ one has $|G|=p^e=\sqrt{p^ep^e}\le\sqrt{(a,p^e)p^e}$. Multiplying over $p$ gives the claim, the factor $\sqrt2$ occurring at most once.
\end{proof}

\begin{remark}[On the sources]\label{rem:bbh}
Nothing in this section is new in substance. The evaluation of $G(a,0,c)$ for $(a,c)=1$ and odd $c$ is Gauss's; the $2$-adic evaluations, the reduction of Lemma \ref{lem:degauss} to a unit coefficient, and the vanishing criterion of Corollary \ref{cor:vanishgeneral} for a coefficient sharing a factor with the modulus, can all be found in the literature, in one form or another -- in the monograph \cite{BerndtEvansWilliams}, in \cite{BaierBhandariHaldar}, where the reduction and the vanishing criterion are stated for an arbitrary modulus and used systematically, and in many other places. What is hard to find is a single source that contains all of them at once, in a uniform notation, with the prime $2$ and the degenerate coefficients treated on the same footing as the odd non-degenerate case, and with the precise form of the characters that come out recorded (Lemma \ref{lem:thetachars}); and it is exactly that combination which Proposition \ref{prop:classeval} needs and Section \ref{sec:bilinear} then uses. In keeping with the policy of Section \ref{sec:selfcontained} we have therefore written the evaluations out with complete proofs rather than assemble them from several references with differing conventions. We think this adds to, rather than detracts from, the value of the paper: the conductor assertion of Lemma \ref{lem:thetachars}, which by Remark \ref{rem:notperiodic} is what makes the sums of Section \ref{sec:bilinear} genuine Sali\'e-type sums at their moduli, is precisely the kind of detail that is lost when the even case is quoted from one source and the odd case from another.
\end{remark}

We can now record the evaluation of $G$ on a divisor class, in the exact form Section \ref{sec:bilinear} requires.

\begin{proposition}[Evaluation on a divisor class]\label{prop:classeval}
Let $r\in\N$ and $a\in\Z$, put $d:=(a,r)$, $m:=r/d$, and write $a=dc$, so that $(c,m)=1$. Write $m=2^\nu m_{\mathrm{odd}}$. Then for every $b\in\Z$:
\begin{itemize}
\item[(i)] $G(a,b,r)=0$ unless $d\mid b$. Write $b=db'$.
\item[(ii)] There are a complex number $\Theta$ with $|\Theta|\in\{0,1,\sqrt2\}$, an odd integer $u$, and an integer $A$, all three determined by $(r,d,b)$ alone and independent of $c$, such that
\begin{equation}
G(a,b,r)=\Theta\,d\,\sqrt m\;\Kr cm\,\vartheta_\nu(uc)\;e_m\!\big(A\bar c\big).
\tag{3.8}
\end{equation}
\item[(iii)] Consequently there are complex numbers $\mu_1,\mu_2$ with $|\mu_1|+|\mu_2|\le4$ and characters $\psi_1,\psi_2\in\{\mathbf1,\chi_{-4},\chi_8\}$, again independent of $c$, such that
\begin{equation}
G(a,b,r)=d\,\sqrt m\;\sum_{i=1,2}\mu_i\,\Kr cm\,\psi_i(c)\;e_m\!\big(A\bar c\big),
\tag{3.9}
\end{equation}
where each $\Kr\cdot m\psi_i$ is a Dirichlet character of conductor dividing $m$ (Lemma \ref{lem:thetachars}); moreover $\psi_1=\psi_2=\mathbf1$ when $m$ is odd, and $\psi_1=\psi_2=\chi_8$ when $m\equiv2\pmod4$. In particular $|G(a,b,r)|\le\sqrt2\,d\sqrt m$.
\item[(iv)] If $d\mid b$ then $(A,m)\le(b',m)^2$.
\end{itemize}
\end{proposition}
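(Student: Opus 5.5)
The plan is to reduce to prime powers by the CRT-multiplicativity (3.1), evaluate each local factor with Lemma \ref{lem:degauss} followed by the non-degenerate evaluations of Section \ref{sec:gaussodd}--\ref{sec:gauss2}, and then reassemble. Write $r=\prod_p p^{e_p}$, $\alpha_p:=v_p(d)=\min(v_p(a),e_p)$ and $f_p:=e_p-\alpha_p=v_p(m)$. By (3.1), $G(a,b,r)=\prod_p G(a\,r p^{-e_p},b,p^{e_p})$. Since $(a r p^{-e_p},p^{e_p})=p^{\alpha_p}$, Lemma \ref{lem:degauss} applies to each factor.

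\emph{Part (i) and the local factors.} Lemma \ref{lem:degauss} kills the factor at $p$ unless $p^{\alpha_p}\mid b$, and these conditions together give (i). When $d\mid b$, the factor at $p$ becomes $p^{\alpha_p}G(c\,w_p,\,b/p^{\alpha_p},\,p^{f_p})$, where
\[
w_p:=\frac{d}{p^{\alpha_p}}\cdot\frac{r}{p^{e_p}} .
\]
Here $w_p$ is a unit at $p$, depends only on $(r,d)$, and carries all the dependence on the prime. Also $b/p^{\alpha_p}=b'\cdot d/p^{\alpha_p}$ with $d/p^{\alpha_p}$ a $p$-unit. If $f_p=0$ the factor is just $p^{\alpha_p}$.

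For odd $p$ with $f_p\ge1$, I apply Lemma \ref{lem:completesquare} and Proposition \ref{prop:generalgauss}. The factor is $p^{\alpha_p}p^{f_p/2}$ times three pieces: the symbol $\Kr{w_p}{p}^{f_p}\epsilon_p^{[f_p\,\mathrm{odd}]}$, the character value $\Kr{c}{p}^{f_p}$, and the phase $e_{p^{f_p}}(B_p\bar c)$ with $B_p:=-\overline{4w_p}(b/p^{\alpha_p})^2$.

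At $p=2$ there are three cases. If $f_2=1$, Lemma \ref{lem:twoadic}(i) gives $G(cw_2,b_0,2)=1+(-1)^{b_0}\in\{0,2\}$. This is independent of $c$ and matches $\Kr c2\vartheta_1(c)=1$ for odd $c$, with the $\sqrt2$ going into $\Theta$. If $f_2\ge2$, the factor vanishes for odd $b_0$ by Lemma \ref{lem:twoadic}(i). Otherwise $b_0=2b_0''$, and Lemma \ref{lem:twoadic}(ii) together with (3.6) gives $2^{(f_2+1)/2}$ times $\Kr{cw_2}{2^{f_2}}\vartheta_{f_2}(cw_2)$ times $e_{2^{f_2}}(-\overline{w_2}b_0''^2\,\bar c)$. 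This fixes $u:=w_2$, which is odd.

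\emph{Reassembly, parts (ii)--(iii).} I multiply the local factors. The powers give $d\sqrt m$ up to the $\sqrt2$ from $p=2$. The characters multiply to $\Kr cm$, because the Kronecker symbol is multiplicative in the modulus. All $c$-free constants go into $\Theta$, so $|\Theta|\in\{0,1,\sqrt2\}$. The phases combine by CRT into a single $e_m(A\bar c)$: take $A$ with $A\equiv B_p\,(m/p^{f_p})^{-1}$ modulo $p^{f_p}$, and similarly at $2$. Here $\bar c$ is taken modulo $m$, which reduces correctly to each $p^{f_p}$. This gives (3.8).

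For (iii), Lemma \ref{lem:thetachars} applied to $\vartheta_\nu(uc)$ gives two terms. I use $\chi_{-4}(uc)=\chi_{-4}(u)\chi_{-4}(c)$ and $\Kr{uc}2=\Kr u2\Kr c2$ to push the $u$-dependence into the coefficients. The coefficients have total size $\le2$, and multiplying by $|\Theta|\le\sqrt2$ gives $|\mu_1|+|\mu_2|\le2\sqrt2\le4$. The statements on conductors and on the cases $\nu=0,1$ come straight from Lemma \ref{lem:thetachars}.

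\emph{Part (iv).} Modulo $p^{f_p}$, $A$ is a $p$-unit times $(b')^2$ for odd $p$. At $p=2$ it is a unit times $(b_0'')^2=(b_0/2)^2$, where $b_0$ is $b'$ times a $2$-adic unit. So $v_p$ of the $p$-part of $(A,m)$ is at most $\min(f_p,2v_p(b'))\le2\min(f_p,v_p(b'))$, and taking the product over $p$ gives $(A,m)\le(b',m)^2$.

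I expect the main obstacle to be the $2$-adic bookkeeping. The points to check are that $u$ and $A$ really are independent of $c$ in every subcase ($f_2=0,1,\ge2$, and $b_0$ odd or even), that the reduction $b_0\to b_0/2$ at $p=2$ is compatible with the CRT glueing of the phases, and that none of the inverses involved is secretly taken modulo $4$ rather than modulo $2^{f_2}$.
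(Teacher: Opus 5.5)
Your route is the paper's: split by (3.1), reduce each local factor with Lemma~\ref{lem:degauss}, evaluate with Lemma~\ref{lem:completesquare}, Proposition~\ref{prop:generalgauss} and Lemma~\ref{lem:twoadic}, and glue the phases by CRT. Your $w_p$ is the paper's $u_p$, and (i)--(iii) go through as you describe.

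\textbf{The gap is in part (iv) when $f_2=1$, i.e.\ $m\equiv2\pmod4$, and a sign slip hides it.} For odd $a$ one has $G(a,b,2)=1+(-1)^{a+b}$. Since $cw_2$ is odd, the local factor is therefore $1-(-1)^{b_0}$, not $1+(-1)^{b_0}$. It is nonzero exactly when $b_0$, equivalently $b'$, is odd. You attach no phase at $2$ in this case, so your CRT recipe gives $A\equiv0\pmod2$ and hence $2\mid(A,m)$. But $(b',m)^2$ is odd whenever $G(a,b,r)\ne0$. So your $A$ violates (iv) precisely in the non-vanishing case. Concretely, take $r=2$, $a=b=1$: then $G(1,1,2)=2$, $d=1$, $m=2$, $b'=1$, and $(A,2)=2>1=(b',2)^2$. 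Similarly, for $r=6$, $a=b=1$ your recipe gives $A\equiv2\pmod 6$, while $G(1,1,6)=3+i\sqrt3\ne0$. Your check of (iv) covers only odd $p$ and $f_2\ge2$, so it never meets this case. With your incorrect vanishing condition it would have looked harmless, since $b'$ would then be even.

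\textbf{The missing step is a deliberate choice of an odd phase at $2$.} Because $\bar c$ is odd, $e_2(\bar c)=-1$. Since $\Kr u2\Kr c2\vartheta_1(uc)=1$ for odd $u,c$, the local factor $2$ equals $-2\,e_2(\bar c)\cdot\Kr u2\Kr c2\vartheta_1(uc)$. In other words, take $A$ odd modulo $2$ and absorb the sign into $\Theta$. This is the paper's choice $B_2:=1$, made explicitly so that (iv) holds; it gives $(A,2)=1\le(b',2)^2$.

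Two smaller omissions should also be filled.
\begin{itemize}
\item Statement (iv) is asserted whenever $d\mid b$, including cases where a local factor vanishes (e.g.\ $f_2\ge2$ with $b_0$ odd, where your $b_0''$ is undefined). There you must set $\Theta=0$ and choose $A$ explicitly, say $A=1$. If $m\equiv2\pmod4$, also choose $\psi_1=\psi_2=\chi_8$ so that (iii) still holds as stated.
\item For $f_2=1$ you never fix $u$. Any odd integer works (e.g.\ $w_2$), since $\Kr c2\vartheta_1(uc)=\Kr u2$ is a constant that goes into $\Theta$.
\end{itemize}
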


The proof is a prime-by-prime application of the evaluations of Sections \ref{sec:gaussodd}--\ref{sec:gauss2}, followed by reassembly through (3.1). We write it out because the precise form of the $c$-dependence in (3.8) -- a Kronecker symbol, a function of $uc\bmod8$, and an additive character in $\bar c$, and nothing else -- is what Section \ref{sec:bilinear} rests on.

\begin{proof}
Write $r=\prod_pp^{f_p}$, $d=\prod_pp^{\delta_p}$ (so $\delta_p\le f_p$) and $m=\prod_pp^{n_p}$ with $n_p:=f_p-\delta_p$. Applying (3.1) once for each prime,
\[
G(a,b,r)=\prod_{p\mid r}G\big(a_p,b,p^{f_p}\big),\qquad a_p:=a\cdot\frac r{p^{f_p}} .
\]
Since $r/p^{f_p}$ is a unit modulo $p$, $v_p(a_p)=v_p(a)\ge\delta_p$, with equality when $n_p>0$ (then $p\mid m$ and $(c,p)=1$); in all cases $a_p=p^{\delta_p}c_p$ with
\begin{equation}
c_p:=c\cdot u_p,\qquad u_p:=\frac d{p^{\delta_p}}\cdot\frac r{p^{f_p}},
\tag{3.10}
\end{equation}
and $u_p$ is a unit modulo $p$; if $n_p>0$ then $c_p$ is a unit modulo $p$.

\emph{The factor at a prime $p\mid r$.} If $n_p=0$, Lemma \ref{lem:degauss} gives $G(a_p,b,p^{f_p})=p^{f_p}\ind[p^{f_p}\mid b]=p^{\delta_p}\ind[p^{\delta_p}\mid b]$, independent of $c$. If $n_p>0$, Lemma \ref{lem:degauss} gives $G(a_p,b,p^{f_p})=0$ unless $p^{\delta_p}\mid b$, and then
\[
G(a_p,b,p^{f_p})=p^{\delta_p}\,G\big(c_p,\,b_p,\,p^{n_p}\big),\qquad b_p:=b/p^{\delta_p}=b'\cdot\frac d{p^{\delta_p}} .
\]
Since $p^{\delta_p}\mid b$ for every $p\mid r$ is equivalent to $d\mid b$, this proves (i), and from now on $d\mid b$.

For odd $p$ with $n_p>0$, Lemma \ref{lem:completesquare} and Proposition \ref{prop:generalgauss} give
\[
G(c_p,b_p,p^{n_p})=e_{p^{n_p}}\!\big({-}\overline{4c_p}\,b_p^2\big)\,G(c_p,0,p^{n_p})=\eta_p\,p^{n_p/2}\,\Kr{c_p}p^{\,n_p}\,e_{p^{n_p}}\!\big(B_p\,\overline{c_p}\big),\qquad B_p:=-\bar4\,b_p^2,
\]
where $\eta_p=\epsilon_p$ if $n_p$ is odd and $\eta_p=1$ if $n_p$ is even, and $\bar4$ is the inverse of $4$ modulo $p^{n_p}$.

For $p=2$ with $n_2=\nu\ge2$, Lemma \ref{lem:twoadic}(i) gives $G(c_2,b_2,2^\nu)=0$ if $b_2$ is odd, and for $b_2=2b_0$ parts (ii) and (iii) together with (3.6) give
\[
G(c_2,b_2,2^\nu)=e_{2^\nu}\!\big({-}\overline{c_2}\,b_0^2\big)\,G(c_2,0,2^\nu)=\sqrt2\,2^{\nu/2}\,\Kr{c_2}{2^\nu}\,\vartheta_\nu(c_2)\,e_{2^\nu}\!\big(B_2\,\overline{c_2}\big),\qquad B_2:=-b_0^2 .
\]
For $p=2$ with $\nu=1$, Lemma \ref{lem:twoadic}(i) gives $G(c_2,b_2,2)=0$ if $b_2$ is even, and $G(c_2,b_2,2)=1+(-1)^{c_2+b_2}=2$ if $b_2$ is odd; since $\vartheta_1=\Kr\cdot2$ and $\Kr{c_2}2^{\,2}=1$, we may write this as $2=-\sqrt2\,2^{1/2}\,\Kr{c_2}2\vartheta_1(c_2)\,e_2(B_2\overline{c_2})$ with $B_2:=1$, using $e_2(\overline{c_2})=-1$ for odd $c_2$. (The choice $B_2=1$ rather than $0$ is made so that (iv) holds; see below.)

\emph{Reassembly.} If any factor vanishes, $G(a,b,r)=0$ and (3.8)--(3.9) hold with $\Theta=0$, $\mu_1=\mu_2=0$, $u=1$, $A:=1$ and $\psi_1=\psi_2:=\chi_8$ if $\nu=1$, $:=\mathbf1$ otherwise (so that the conductor condition in (iii) holds), and (iv) holds trivially. Otherwise multiply the factors. The powers of $p$ give $\prod_pp^{\delta_p}\prod_pp^{n_p/2}=d\sqrt m$. The constants give $\Theta:=\prod_{p\ \mathrm{odd},\ p\mid m}\eta_p\cdot\theta_2$, where $\theta_2=1$ if $\nu=0$, $\theta_2=\sqrt2$ if $\nu\ge2$ and $\theta_2=-\sqrt2$ if $\nu=1$; so $|\Theta|=1$ if $m$ is odd and $|\Theta|=\sqrt2$ if $m$ is even. For the Kronecker symbols, complete multiplicativity in the upper argument gives $\Kr{c_p}p=\Kr cp\Kr{u_p}p$, so
\[
\prod_{p\ \mathrm{odd}}\Kr{c_p}p^{\,n_p}\cdot\Kr{c_2}{2^\nu}=\prod_{p\mid m}\Kr cp^{\,n_p}\cdot\prod_{p\mid m}\Kr{u_p}p^{\,n_p}=\sigma\,\Kr cm,\qquad \sigma:=\prod_{p\mid m}\Kr{u_p}p^{\,n_p}\in\{\pm1\},
\]
by the definition of the Kronecker symbol in Section \ref{sec:kronecker}. The factor $\vartheta_\nu(c_2)=\vartheta_\nu(u_2c)$ occurs only when $\nu\ge1$; we put $u:=u_2$ if $\nu\ge1$ and $u:=1$ if $\nu=0$; in either case $u$ is odd (for $\nu\ge1$ because $r/2^{f_2}$ and $d/2^{\delta_2}$ are odd), which is all that is needed for $\vartheta_\nu(uc)$ and $\psi_i(uc)=\psi_i(u)\psi_i(c)$ below; when $\nu=0$, $\vartheta_0\equiv1$ and $u$ plays no role. Absorbing $\sigma$ into $\Theta$ leaves $|\Theta|$ unchanged. Finally, for each $p\mid m$ the inverse of $c_p$ modulo $p^{n_p}$ is $\bar c\,\overline{u_p}$, where $\bar c$ is the inverse of $c$ modulo $m$ (hence modulo $p^{n_p}$), so the local phases are $e_{p^{n_p}}(B_p\overline{u_p}\,\bar c)$, and
\[
\prod_{p\mid m}e_{p^{n_p}}\big(B_p\overline{u_p}\,\bar c\big)=e_m\big(A\bar c\big),\qquad A:=\sum_{p\mid m}B_p\,\overline{u_p}\,\frac m{p^{n_p}}\in\Z ,
\]
since $\sum_p B_p\overline{u_p}/p^{n_p}=A/m$. Here $B_p$, $u_p$, $\overline{u_p}$ are determined by $(r,d,b)$, so $A$ is independent of $c$. This proves (3.8).

For (iii), the explicit formulas in Lemma \ref{lem:thetachars} and its proof give $\vartheta_\nu(x)=\lambda_1\psi_1(x)+\lambda_2\psi_2(x)$ for all odd $x$, with $|\lambda_1|+|\lambda_2|\le2$ (namely $\vartheta_0=\mathbf1$, $\vartheta_1=\chi_8$, $\vartheta_\nu=\tfrac1{\sqrt2}\mathbf1+\tfrac i{\sqrt2}\chi_{-4}$ for $\nu\ge2$), and $\psi_i(uc)=\psi_i(u)\psi_i(c)$ by complete multiplicativity. Hence (3.9) holds with $\mu_i:=\Theta\lambda_i\psi_i(u)$, and $|\mu_1|+|\mu_2|\le\sqrt2\cdot2<4$. In particular $\psi_1=\psi_2=\mathbf1$ when $\nu=0$ and $\psi_1=\psi_2=\chi_8$ when $\nu=1$ (with the convention $\psi_2:=\psi_1$ of Lemma \ref{lem:thetachars} when $\lambda_2=0$), and the same holds in the vanishing case by the choice made there. The bound $|G(a,b,r)|\le\sqrt2\,d\sqrt m$ is immediate from (3.8).

For (iv), we may assume $G(a,b,r)\ne0$, the vanishing case having been settled. Fix $p\mid m$. Since $m/p^{n_p}$ and $\overline{u_p}$ are units modulo $p^{n_p}$, $A\equiv B_p\overline{u_p}\,(m/p^{n_p})\pmod{p^{n_p}}$ gives $(A,p^{n_p})=(B_p,p^{n_p})$. For odd $p$, $B_p=-\bar4b_p^2$ with $\bar4$ a unit, and $b_p=b'\,(d/p^{\delta_p})$ with $d/p^{\delta_p}$ a unit modulo $p$, so $(B_p,p^{n_p})=(b'^2,p^{n_p})$. For $p=2$ with $\nu\ge2$, $B_2=-b_0^2$ with $2b_0=b_2=b'(d/2^{\delta_2})$ and $d/2^{\delta_2}$ odd, so $(B_2,2^\nu)=(b_0^2,2^\nu)\le(b_2^2,2^\nu)=(b'^2,2^\nu)$. For $p=2$ with $\nu=1$, $B_2=1$, so $(A,2)=1\le(b'^2,2)$. Multiplying over $p\mid m$ gives $(A,m)\le(b'^2,m)$, and $(b'^2,m)\le(b',m)^2$ because at each prime $\min(2v_p(b'),v_p(m))\le2\min(v_p(b'),v_p(m))$.
\end{proof}

We verified Proposition \ref{prop:classeval} numerically in the explicit form (3.8), with $\Theta$, $u$ and $A$ computed by the recipe of the proof. For each of the $31$ moduli
\[
\begin{gathered}r\in\{6,10,12,18,24,27,30,40,45,48,50,54,72,75,90,96,\\100,105,121,135,144,150,189,200,225,245,250,288,360,441,512\}\end{gathered}
\]
-- odd, even, squarefree, prime-power and highly $2$-divisible, including many with $m\equiv2\ (4)$ -- for every divisor $d\mid r$ with $m=r/d\ge2$, for a spread of $b\bmod r$, and for every unit $c\bmod m$, we compared $G(dc,b,r)$ computed from its definition with the right-hand side of (3.8). Over $32{,}167$ such configurations the worst relative discrepancy was $3.6\times10^{-9}$, the predicted vanishing occurred exactly when predicted, and $(A,m)\le(b',m)^2$ held in every non-vanishing case.
\section{Short sums of modular square roots, and the bilinear sum bound}\label{sec:bilinear}

This section proves Theorems \ref{thm:shortsqrt} and \ref{thm:sigma}. The argument has three steps, each of which is exact or elementary except for the single appeal to Hypothesis \ref{hyp:R*} in the third. First (Section \ref{sec:completion}), a sum of modular square roots over an interval $J$ is \emph{completed}: detecting the condition $n\in J$ by additive characters modulo $r$ writes it, exactly, as a sum over $b\bmod r$ of Fourier coefficients of $J$ against the quadratic Gauss sums $G(b\bar\jmath,a,r)$. Second (Section \ref{sec:classes}), the $b$ are grouped by $d:=(b,r)$, and on each class the Gauss sum is evaluated by Proposition \ref{prop:classeval}: the outcome is that the class contributes $(r/d)^{-1/2}$ times a Sali\'e-type sum in the variable $b/d$ at the modulus $r/d$, weighted by the Fourier coefficients of $J$ at that modulus. Third (Section \ref{sec:shortbound}), these Fourier coefficients are a Dirichlet kernel of effective length $(r/d)/|J|$, and the Sali\'e-type sum against it is bounded by partial summation from Hypothesis \ref{hyp:R*}. The bilinear sum $\Sigma_f$ is then handled in Section \ref{sec:bilinearproof} by cutting the $m$-range into blocks on which the oscillating factor $e(lf(m))$ is nearly constant and applying Theorem \ref{thm:shortsqrt} on each block. No Cauchy--Schwarz inequality, no Weyl differencing and no Poisson summation over $\R$ occurs anywhere; all Fourier analysis is finite.

\subsection{Completion}\label{sec:completion}

For a finite set $J\subset\Z$, a modulus $c\in\N$ and $b\in\Z$ put
\begin{equation}
\Lambda_J(b;c):=\sum_{n\in J}e_c(-bn),
\tag{4.1}
\end{equation}
which depends on $b$ only modulo $c$. Throughout this section $r\in\N$, $(j,r)=1$, and $\bar\jmath$ denotes a fixed integer with $j\bar\jmath\equiv1\pmod r$; the Gauss sum $G(a,b,c)=\sum_{n\bmod c}e_c(an^2+bn)$ is that of Section \ref{sec:gausstoolkit}.

\begin{lemma}[Completion]\label{lem:completion}
For every finite $J\subset\Z$ and every $a\in\Z$,
\begin{equation}
\sum_{n\in J}e_r\big(a\sqrt{jn}\big)=\frac1r\sum_{b\bmod r}\Lambda_J(b;r)\,G\big(b\bar\jmath,a,r\big).
\tag{4.2}
\end{equation}
\end{lemma}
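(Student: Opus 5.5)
The plan is to unfold the left-hand side of (4.2) into a double sum over $n\in J$ and over square roots $k\bmod r$, and then detect the congruence between them by orthogonality of additive characters modulo $r$. By definition (1.2),
\[
\sum_{n\in J}e_r\big(a\sqrt{jn}\big)=\sum_{n\in J}\ \sum_{\substack{k\bmod r\\k^2\equiv jn\ (r)}}e_r(ak).
\]
Since $(j,r)=1$, the condition $k^2\equiv jn\pmod r$ is equivalent to $n\equiv\bar\jmath k^2\pmod r$. This is the only place where the hypothesis $(j,r)=1$ enters, and it is the step that turns the condition into a linear congruence in $n$. Nothing else restricts $r$, $a$ or $J$, so the identity will hold at every modulus, as claimed.

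Next, detect this congruence by
\[
\ind\big[n\equiv\bar\jmath k^2\ (r)\big]=\frac1r\sum_{b\bmod r}e_r\big(b(\bar\jmath k^2-n)\big),
\]
which is valid for all integers $n,k$. Inserting it and interchanging the three finite sums (over $n\in J$, $k\bmod r$, $b\bmod r$) gives
\[
\frac1r\sum_{b\bmod r}\Big(\sum_{n\in J}e_r(-bn)\Big)\Big(\sum_{k\bmod r}e_r\big(b\bar\jmath k^2+ak\big)\Big)=\frac1r\sum_{b\bmod r}\Lambda_J(b;r)\,G\big(b\bar\jmath,a,r\big),
\]
by the definitions (4.1) of $\Lambda_J$ and of $G$ in Section \ref{sec:gausstoolkit}.

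There is no real obstacle; the argument needs only a little care about well-definedness. Every summand depends on $k$ and $b$ only modulo $r$, and $G(b\bar\jmath,a,r)$ does not depend on the choice of the representative $\bar\jmath$. Also, $J$ may contain several integers in the same residue class. This is harmless, because the indicator is evaluated separately for each $n\in J$ and the sum over $n$ stays outside the modular reduction. In particular, $J$ is not required to have length at most $r$, which matches the statement for arbitrary finite $J$.
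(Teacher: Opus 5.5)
Your proof is correct and follows the paper's argument essentially step for step. You unfold via (1.2), use $(j,r)=1$ to rewrite $k^2\equiv jn\ (r)$ as $n\equiv\bar\jmath k^2\ (r)$, detect this by orthogonality of additive characters modulo $r$, and interchange the finite sums to recognise $\Lambda_J(b;r)$ and $G(b\bar\jmath,a,r)$. The only cosmetic difference is that the paper first interchanges to form the count $\#\{n\in J:\ n\equiv\bar\jmath k^2\ (r)\}$ and then expands it, whereas you insert the indicator directly.
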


\begin{proof}
By the convention (1.2), the left-hand side is $\sum_{n\in J}\sum_{k\bmod r,\,k^2\equiv jn}e_r(ak)$. For fixed $k$ the congruence $k^2\equiv jn\ (r)$ is equivalent to $n\equiv\bar\jmath k^2\ (r)$, so interchanging the sums gives
\[
\sum_{k\bmod r}e_r(ak)\,\#\{n\in J:\ n\equiv\bar\jmath k^2\ (r)\}.
\]
By orthogonality of additive characters,
\[
\#\{n\in J:\ n\equiv x\ (r)\}=\frac1r\sum_{b\bmod r}\sum_{n\in J}e_r\big(b(x-n)\big)=\frac1r\sum_{b\bmod r}e_r(bx)\,\Lambda_J(b;r).
\] Substituting $x=\bar\jmath k^2$ and interchanging once more,
\[
\sum_{n\in J}e_r\big(a\sqrt{jn}\big)=\frac1r\sum_{b\bmod r}\Lambda_J(b;r)\sum_{k\bmod r}e_r\big(b\bar\jmath k^2+ak\big),
\]
and the inner sum is $G(b\bar\jmath,a,r)$. (It depends on $b\bar\jmath$ only modulo $r$, so the choice of the integer $\bar\jmath$ is immaterial.)
\end{proof}

Nothing has been estimated: (4.2) is an identity, valid for every modulus and every $a$, with no restriction on the size of $J$. We verified it numerically, computing both sides from their definitions, for $39$ moduli $9\le r\le150$ of every factorization type (odd and even, prime powers, squarefree, and highly $2$-divisible), five random choices of $(a,j,J)$ for each -- with $a$ variously a unit, a non-unit, or a divisor of $r$, and $|J|$ anywhere between $1$ and $r$ -- and found agreement to $2\times10^{-11}$ in all $195$ configurations.

\subsection{The divisor classes, and the evaluation on a class}\label{sec:classes}

For $m\in\N$ let $\mathcal R_m:=\{b\in\Z:\ -m/2<b\le m/2\}$, a complete system of residues modulo $m$, and write $\bar b$ for the inverse of $b$ modulo whichever modulus is in force.

\begin{proposition}[Class decomposition]\label{prop:classdecomp}
Let $a\in\Z$ and let $J\subset\Z$ be finite. For every $d\mid r$ with $d\mid a$, put $m:=r/d$; there are complex numbers $\mu_1,\mu_2$ with $|\mu_1|+|\mu_2|\le4$, characters $\psi_1,\psi_2\in\{\mathbf1,\chi_{-4},\chi_8\}$, with $\psi_1=\psi_2=\mathbf1$ when $m$ is odd, and an integer $A$ with
\begin{equation}
(A,m)\ \le\ \big(a/d,\,m\big)^2 ,
\tag{4.3}
\end{equation}
all depending only on $(r,d,a,j)$, such that
\begin{equation}
\sum_{n\in J}e_r\big(a\sqrt{jn}\big)=\sum_{\substack{d\mid r\\ d\mid a}}\frac1{\sqrt m}\sum_{i=1,2}\mu_i\sum_{b\in\mathcal R_m}\Kr bm\psi_i(b)\,e_m\big(A\bar b\big)\,\Lambda_J(b;m),
\tag{4.4}
\end{equation}
where the terms with $(b,m)>1$ are zero (the Kronecker symbol vanishes) and are included only for notational convenience.
\end{proposition}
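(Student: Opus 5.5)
Start from the completion identity (4.2) of Lemma \ref{lem:completion} and group the terms $b\bmod r$ by $d:=(b,r)$. Note that $(b\bar\jmath,r)=(b,r)=d$ since $\bar\jmath$ is a unit modulo $r$. We then apply Proposition \ref{prop:classeval} to $G(b\bar\jmath,a,r)$. Its quadratic coefficient is $b\bar\jmath$ and its linear coefficient is $a$. So in that proposition the roles are: ``$a$'' is $b\bar\jmath$, ``$d$'' is our $d$, ``$b$'' is our $a$, and ``$c$'' is $b'\bar\jmath$, where $b=db'$ with $(b',m)=1$ and $m=r/d$.

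Part (i) then shows that the class $d$ contributes nothing unless $d\mid a$, which produces the restriction $d\mid r$, $d\mid a$ in (4.4). For the surviving classes, (3.9) gives
\[G(b\bar\jmath,a,r)=d\sqrt m\sum_{i}\mu_i'\Kr{b'\bar\jmath}m\psi_i(b'\bar\jmath)\,e_m\big(A'\,\overline{b'\bar\jmath}\big).\]
Here $\mu_i'$, $\psi_i$ and $A'$ depend only on $(r,d,a)$, and $\psi_1=\psi_2=\mathbf1$ when $m$ is odd.

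Next we absorb $j$. Complete multiplicativity gives $\Kr{b'\bar\jmath}m\psi_i(b'\bar\jmath)=\Kr{\bar\jmath}m\psi_i(\bar\jmath)\cdot\Kr{b'}m\psi_i(b')$. This step needs care when $m\equiv2\pmod4$: there the product $\Kr\cdot m\psi_i=\Kr\cdot m\chi_8=\Kr\cdot{m_{\mathrm{odd}}}$ is a genuine character modulo $m$ by Lemma \ref{lem:thetachars}, so the value at $b'\bar\jmath$ does not depend on which integer representative of $\bar\jmath$ we chose. Set $\mu_i:=\mu_i'\Kr{\bar\jmath}m\psi_i(\bar\jmath)$, which has the same modulus as $\mu_i'$, so $|\mu_1|+|\mu_2|\le4$ is preserved. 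Also $\overline{b'\bar\jmath}\equiv j\,\overline{b'}\pmod m$, so put $A:=A'j$. Because $(j,m)=1$, $(A,m)=(A',m)\le(a/d,m)^2$ by part (iv), where ``$b'$'' in (iv) is $a/d$. This gives (4.3).

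It remains to handle the Fourier weight and the normalisation. Since $b=db'$ and $r=dm$, we have $e_r(-bn)=e_m(-b'n)$, so $\Lambda_J(b;r)=\Lambda_J(b';m)$. As $b$ runs over residues modulo $r$ with $(b,r)=d$, $b'$ runs exactly once over the units modulo $m$. We represent these by $\mathcal R_m$ and add the non-units for free, since the Kronecker symbol kills them. Each summand is $m$-periodic in $b'$: for $m\not\equiv2\pmod4$ by Lemma \ref{lem:thetachars}, and for $m\equiv2\pmod4$ because $\psi_i=\chi_8$. So the choice of representatives is harmless. Collecting the prefactors gives $\frac1r\cdot d\sqrt m=\frac{d\sqrt m}{dm}=m^{-1/2}$, which yields (4.4).

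The main obstacle is bookkeeping rather than estimation. One must check that all auxiliary data ($\Theta$, $u$, $A$) are independent of $b'$, which is exactly the ``independent of $c$'' clause of Proposition \ref{prop:classeval}. One must also check that the even-modulus characters are periodic modulo $m$, so that both the change of variable $c=b'\bar\jmath$ and the use of the residue system $\mathcal R_m$ are legitimate.
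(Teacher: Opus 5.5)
Your proposal is correct and follows the paper's proof essentially step for step. Like the paper, you group (4.2) by $d=(b,r)$, apply Proposition \ref{prop:classeval} with $c=b'\bar\jmath$, pull out $\kappa_i=\Kr{\bar\jmath}m\psi_i(\bar\jmath)$, set $A=A'j$, and use $\Lambda_J(db';r)=\Lambda_J(b';m)$ together with $d\sqrt m/r=m^{-1/2}$. Two small remarks. First, the paper spells out why $|\kappa_i|=1$, which you leave implicit: if $\psi_i\ne\mathbf1$ then $m$, hence $r$, is even, so $\bar\jmath$ is odd. Second, your periodicity remarks are correct but unnecessary, because (3.9) holds for every integer $c$ and one can simply take the representatives $b=db'$ with $b'\in\mathcal R_m$.
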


\begin{proof}
Each residue class $b\bmod r$ has a well-defined $d:=(b,r)$, and the classes with $(b,r)=d$ are exactly $b=db_1$ with $b_1\bmod m$, $(b_1,m)=1$; since $db_1\equiv db_1'\ (dm)$ if and only if $b_1\equiv b_1'\ (m)$, the map $b_1\mapsto db_1$ is a bijection from $\{b_1\in\mathcal R_m:(b_1,m)=1\}$ onto these classes. Hence (4.2) reads
\[
\sum_{n\in J}e_r\big(a\sqrt{jn}\big)=\frac1r\sum_{d\mid r}\ \sum_{\substack{b_1\in\mathcal R_m\\(b_1,m)=1}}\Lambda_J(db_1;r)\,G\big(db_1\bar\jmath,a,r\big).
\]
Since $r=dm$, $\Lambda_J(db_1;r)=\sum_{n\in J}e_{dm}(-db_1n)=\Lambda_J(b_1;m)$.

For the Gauss sum we apply Proposition \ref{prop:classeval} with its $a$ equal to $db_1\bar\jmath$ and its $b$ equal to our $a$. As $(b_1\bar\jmath,m)=1$, we have $(db_1\bar\jmath,r)=d(b_1\bar\jmath,m)=d$, so its $d$ and $m$ are ours, and its $c$ is $c:=b_1\bar\jmath$. Part (i) gives $G(db_1\bar\jmath,a,r)=0$ unless $d\mid a$, which removes the classes with $d\nmid a$. For $d\mid a$, parts (iii) and (iv) give
\[
G\big(db_1\bar\jmath,a,r\big)=d\sqrt m\sum_{i=1,2}\mu_i^0\,\Kr cm\psi_i(c)\,e_m\big(A_0\bar c\big),\qquad |\mu_1^0|+|\mu_2^0|\le4,\qquad (A_0,m)\le(a/d,m)^2,
\]
with $\mu_i^0\in\C$, $\psi_i\in\{\mathbf1,\chi_{-4},\chi_8\}$ and $A_0\in\Z$ depending on $(r,d,a)$ only -- not on $b_1$ -- and with $\psi_1=\psi_2=\mathbf1$ when $m$ is odd. By complete multiplicativity of the Kronecker symbol in its upper argument and of $\psi_i$,
\[
\Kr cm\psi_i(c)=\Kr{b_1}m\psi_i(b_1)\cdot\kappa_i,\qquad \kappa_i:=\Kr{\bar\jmath}m\psi_i(\bar\jmath).
\]
Here $|\kappa_i|=1$: $(\bar\jmath,m)=1$ gives $|\Kr{\bar\jmath}m|=1$; and if $\psi_i\ne\mathbf1$ then $m$ is even, hence $r$ is even, hence $j$ and $\bar\jmath$ are odd, so $|\psi_i(\bar\jmath)|=1$. Next, $(b_1\bar\jmath)(\overline{b_1}\,j)=(b_1\overline{b_1})(j\bar\jmath)\equiv1\pmod m$ because $m\mid r$, so $\bar c\equiv\overline{b_1}\,j\ (m)$ and $e_m(A_0\bar c)=e_m(A\overline{b_1})$ with $A:=A_0j$, for which $(A,m)=(A_0,m)$ as $(j,m)=1$. Putting $\mu_i:=\mu_i^0\kappa_i$, so that $|\mu_1|+|\mu_2|\le4$, and using $d\sqrt m/r=m^{-1/2}$, we obtain (4.4) after renaming $b_1$ as $b$; the terms with $(b,m)>1$ vanish because $\Kr bm=0$ there.
\end{proof}

The shape of (4.4) is the whole point. A short sum of modular square roots at the modulus $r$ has been written, exactly, as a sum over the divisors $d$ of $r$ dividing $a$ of $(r/d)^{-1/2}$ times a sum of \emph{Sali\'e type} -- a real Dirichlet character times an additive character in the inverse of the variable -- at the modulus $r/d$, against the weight $\Lambda_J(\cdot\,;r/d)$. The characters are the Kronecker symbol times one of three fixed characters modulo $8$ (Lemma \ref{lem:thetachars}), which is exactly the family covered by Lemma \ref{lem:shortsalie}. For odd $r$ all the $\psi_i$ are trivial and the inner sums are Sali\'e sums in the strict sense (2.2), weighted by $\Lambda_J$.

We verified (4.4) numerically in the sharper form that precedes the decomposition of $\vartheta_\nu$ into characters -- that is, with the right-hand side of (3.8), and $\Theta$, $u$, $A$ computed by the recipe of the proof of Proposition \ref{prop:classeval} -- over the same $195$ configurations as (4.2), with worst absolute discrepancy $1.0\times10^{-7}$ against values of the left-hand side up to $|J|$; the discrepancy is floating-point accumulation in the sums $\Lambda_J$ of up to $r$ terms.

\subsection{The bound under Hypothesis \texorpdfstring{$R^*$}{R*}}\label{sec:shortbound}

We isolate the conditional input, once, in the form in which it is used.

\begin{lemma}[Short sums of Sali\'e type]\label{lem:shortsalie}
Let $m\in\N$, $A,B\in\Z$, $\psi\in\{\mathbf1,\chi_{-4},\chi_8\}$ and $\chi(t):=\Kr tm\psi(t)$. Then, under Hypothesis \ref{hyp:R*}, for any real $x_1\le x_2$ with $x_2-x_1\le m$,
\[
\sum_{x_1<t\le x_2}\chi(t)\,e_m\big(A\bar t+Bt\big)\ \ll\ (x_2-x_1+1)^{1/2}\,m^\eps\,(A,m)^{1/2},
\]
the terms with $(t,m)>1$ being zero.
\end{lemma}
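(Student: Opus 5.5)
The plan is to reduce each of the three cases $\psi\in\{\mathbf1,\chi_{-4},\chi_8\}$ to a single application of Hypothesis \ref{hyp:R*}, at the modulus $c=m$, $4m$ or $8m$. I will use only three tools: complete multiplicativity of the Kronecker symbol in its lower argument (Section \ref{sec:kronecker}), the identity $e_m(x)=e_{km}(kx)$, and, for $\chi_{-4}$, an expansion of $\chi_{-4}$ into additive characters modulo $4$. In every case the interval $(x_1,x_2]$ has length at most $m$, which is at most the new modulus, so the length condition of Hypothesis \ref{hyp:R*} is satisfied.

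\emph{Case $\psi=\mathbf1$.} This is Hypothesis \ref{hyp:R*} with $c=m$, $a=A$, $b=B$, applied verbatim.

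\emph{Case $\psi=\chi_8$.} For every integer $t$ we have $\Kr t{8m}=\Kr t2^{3}\Kr tm=\chi_8(t)\Kr tm$. At even $t$ both sides vanish, so this holds whatever the parity of $m$. Next take $t$ with $(t,8m)=1$, and let $t'$ be its inverse modulo $8m$. Then $t'$ is also an inverse of $t$ modulo $m$, and therefore
\[
e_m(A\bar t+Bt)=e_{8m}(8At'+8Bt).
\]
The remaining $t$ contribute zero on both sides. The sum is therefore exactly a short Sali\'e sum at modulus $c=8m$ with coefficients $(8A,8B)$. Hypothesis \ref{hyp:R*} bounds it by
\[
(x_2-x_1+1)^{1/2}(8m)^\eps(8A,8m)^{1/2}\ll(x_2-x_1+1)^{1/2}m^\eps(A,m)^{1/2}.
\]

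\emph{Case $\psi=\chi_{-4}$.} Here $\chi_{-4}$ is not itself of the form $\Kr t{c}$, so I first remove it additively. One checks on the four classes modulo $4$ that
\[
\chi_{-4}(t)=\frac1{2i}\big(e_4(t)-e_4(-t)\big)
\]
for every integer $t$, including the even ones. Since $\chi_{-4}$ vanishes at even $t$, we also have $\Kr tm\chi_{-4}(t)=\Kr t{4m}\chi_{-4}(t)$, because $\Kr t{4m}=\Kr t2^2\Kr tm$ agrees with $\Kr tm$ on odd $t$. Substituting the expansion turns the sum into $\frac1{2i}$ times a difference of two sums of the form
\[
\sum_{x_1<t\le x_2}\Kr t{4m}\,e_{4m}\big(4A\bar t+(4B\pm m)t\big).
\]
The inverse is rewritten modulo $4m$ exactly as in the $\chi_8$ case. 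Each of the two sums is covered by Hypothesis \ref{hyp:R*} at $c=4m$, with $(4A,4m)^{1/2}\le2(A,m)^{1/2}$.

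\emph{Where the care is needed.} The step that needs attention is not any estimate but the bookkeeping at the even integers and at non-units. In each case I must check that the substituted Kronecker symbol vanishes exactly where $\chi$ does, so that no terms are gained or lost. I must also check that the inverse $\bar t$ modulo $m$ may be replaced by the inverse modulo $4m$ or $8m$ on the surviving terms. Both points follow from the vanishing of $\Kr t{c}$ exactly when $(t,c)>1$. The $8$-adic and $4$-adic enlargements of the modulus cost only absolute constants.
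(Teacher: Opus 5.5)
Your proof is correct and follows essentially the paper's route: absorb the character modulo $8$ into a Kronecker symbol at an enlarged modulus, rewrite $\bar t$ modulo that modulus, detect any remaining residue information with additive characters, and apply Hypothesis \ref{hyp:R*} there. The only difference is bookkeeping. The paper treats $\chi_{-4}$ and $\chi_8$ together at the modulus $8m$ by expanding $\psi\chi_8$ into the eight additive characters modulo $8$. You instead note that $\Kr\cdot m\chi_8=\Kr\cdot{8m}$ exactly, so no expansion is needed there, and handle $\chi_{-4}$ at the modulus $4m$ with a two-term expansion, which is marginally more economical.
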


\begin{proof}
If $\psi=\mathbf1$ this is Hypothesis \ref{hyp:R*} verbatim. If $\psi\ne\mathbf1$, then $\psi$ vanishes at even arguments, so $\chi(t)=0$ for even $t$, and for odd $t$ coprime to $m$ one has $\Kr t{8m}=\Kr t2^{\,3}\Kr tm=\chi_8(t)\Kr tm$, whence
\[
\chi(t)=\psi'(t)\,\Kr t{8m},\qquad \psi':=\psi\chi_8 ,
\]
an identity valid for all $t\in\Z$, both sides vanishing at even $t$. Expanding the character $\psi'$ modulo $8$ in additive characters, $\psi'(t)=\sum_{k\bmod 8}\hat c_k e_8(kt)$ with $|\hat c_k|\le1$, and writing $\tilde t$ for the inverse of $t$ modulo $8m$ (so $\tilde t\equiv\bar t\ (m)$, whence $e_m(A\bar t)=e_{8m}(8A\tilde t)$), while $e_m(Bt)e_8(kt)=e_{8m}((8B+km)t)$, we obtain
\[
\sum_{x_1<t\le x_2}\chi(t)e_m(A\bar t+Bt)=\sum_{k\bmod8}\hat c_k\sum_{x_1<t\le x_2}\Kr t{8m}\,e_{8m}\big(8A\,\tilde t+(8B+km)t\big).
\]
Each inner sum runs over an interval of length $x_2-x_1\le m\le 8m$, so Hypothesis \ref{hyp:R*} at the modulus $8m$ applies and bounds it by $\ll(x_2-x_1+1)^{1/2}(8m)^\eps(8A,8m)^{1/2}\ll(x_2-x_1+1)^{1/2}m^\eps(A,m)^{1/2}$, using $(8A,8m)=8(A,m)$. Summing over the eight values of $k$ gives the claim.
\end{proof}

\begin{remark}\label{rem:whichchars}
Lemma \ref{lem:shortsalie} uses Hypothesis \ref{hyp:R*} exactly as stated, for the Kronecker symbol and for no other character. A version of Hooley's hypothesis for a \emph{general} Dirichlet character would be a different and strictly stronger assumption, and is not what \cite[Hypothesis 18]{Baier2026a} provides. What makes it possible to avoid it is Lemma \ref{lem:thetachars}: the characters that arise are $\Kr\cdot m$ times one of three explicit characters modulo $8$, and each of those three can be absorbed, by passing from the modulus $m$ to $8m$ and detecting residues modulo $8$, back into a Kronecker symbol. Note also at which moduli the hypothesis is invoked: at $m$ itself when $\psi=\mathbf1$, and at $8m\equiv0\ (4)$ otherwise. The case $\psi=\mathbf1$ with $m\equiv2\ (4)$, where $\Kr\cdot m$ is not a character modulo $m$, never arises, because by Lemma \ref{lem:thetachars} and Proposition \ref{prop:classeval}(iii) a modulus $m\equiv2\ (4)$ always comes with $\psi=\chi_8$; this is the content of Remark \ref{rem:mod4}. If $r$ is odd, none of this is needed: then every $m$ is odd, $\psi=\mathbf1$ throughout, and Lemma \ref{lem:shortsalie} \emph{is} Hypothesis \ref{hyp:R*}.
\end{remark}

The weight $\Lambda_J(\cdot\,;m)$ is a Dirichlet kernel, and we record the bounds on it that we need. For $m,H_0\in\N$ define, for real $x$,
\begin{equation}
\omega(x)=\omega_{m,H_0}(x):=\sum_{n=1}^{H_0}e\!\left(-\frac{xn}m\right),
\tag{4.5}
\end{equation}
so that for $J=\{n_0+1,\ldots,n_0+H_0\}$ and $b\in\Z$,
\begin{equation}
\Lambda_J(b;m)=e_m(-n_0b)\,\omega(b).
\tag{4.6}
\end{equation}

\begin{lemma}[The Dirichlet kernel]\label{lem:kernel}
Let $m,H_0\in\N$ and $\omega=\omega_{m,H_0}$. Then $|\omega(x)|\le H_0$ for all $x$; $|\omega(x)|\le m/(2|x|)$ for $0<|x|\le m/2$; $|\omega'(x)|\le\pi H_0(H_0+1)/m$ for all $x$; and $|\omega'(x)|\le2\pi H_0/|x|$ for $m/H_0\le|x|\le m/2$.
\end{lemma}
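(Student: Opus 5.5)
We treat $\omega$ as a geometric sum and prove the four bounds in turn, each with an explicit constant. The bound $|\omega(x)|\le H_0$ is immediate from the triangle inequality. For the second bound, put $\theta:=x/m$, so that $0<|\theta|\le1/2$. Summing the geometric series gives
\[
|\omega(x)|=\left|\frac{\sin(\pi H_0\theta)}{\sin(\pi\theta)}\right|\le\frac1{|\sin\pi\theta|}.
\]
On $|\theta|\le1/2$ we use Jordan's inequality $|\sin\pi\theta|\ge2|\theta|$. This yields $|\omega(x)|\le1/(2|\theta|)=m/(2|x|)$, as claimed.

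The third bound comes from differentiating term by term:
\[
\omega'(x)=\sum_{n=1}^{H_0}\Big({-}\frac{2\pi in}m\Big)e(-xn/m).
\]
Hence
\[
|\omega'(x)|\le\frac{2\pi}m\sum_{n\le H_0}n=\frac{\pi H_0(H_0+1)}m .
\]

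The fourth bound is the only one that needs care, and we expect it to be the main obstacle: we must obtain the factor $H_0/|x|$ rather than $H_0^2/m$. We would start by writing $\omega'(x)=-(2\pi i/m)\sum_{n\le H_0}n\,z^n$ with $z:=e(-\theta)\ne1$. Abel summation against the partial sums $S_k:=\sum_{n\le k}z^n$ gives
\[
\sum_{n\le H_0}nz^n=H_0S_{H_0}-\sum_{k<H_0}S_k .
\]
Each partial sum satisfies $|S_k|\le1/|\sin\pi\theta|\le m/(2|x|)$, by the same estimate as in the second bound. Inserting this, we get
\[
\Big|\sum_{n\le H_0}nz^n\Big|\le\big(H_0+(H_0-1)\big)\frac m{2|x|}\le\frac{H_0m}{|x|}.
\]
Multiplying by $2\pi/m$ gives $|\omega'(x)|\le2\pi H_0/|x|$. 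This argument holds for every $0<|x|\le m/2$. The lower restriction $|x|\ge m/H_0$ in the statement is therefore not needed for the bound itself; it only marks the range in which this estimate beats the third bound. The one point to check is the constant bookkeeping in the Abel summation: there are $H_0-1$ partial sums $S_1,\dots,S_{H_0-1}$, plus the boundary term $H_0S_{H_0}$, so that the total stays below $2H_0\cdot m/(2|x|)$.
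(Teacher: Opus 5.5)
Your proof is correct. For the first three bounds it coincides with the paper's argument: the triangle inequality, the geometric series together with $|\sin\pi t|\ge2|t|$, and term-by-term differentiation.

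For the fourth bound you take a genuinely different route. The paper factors $\omega=\phi D$, with $\phi(x)=e(-x(H_0+1)/(2m))$ and the real kernel $D(x)=\sin(\pi xH_0/m)/\sin(\pi x/m)$, and then differentiates the quotient. This gives $|D'(x)|\le\pi H_0/(2|x|)+\pi m/(4x^2)$. The second term is what forces the hypothesis $|x|\ge m/H_0$: under it that term becomes $\pi H_0/(4|x|)$, and the paper's total is $\tfrac74\pi H_0/|x|$.

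You instead apply Abel summation, $\sum_{n\le H_0}nz^n=H_0S_{H_0}-\sum_{k<H_0}S_k$. Every partial sum $S_k=z(1-z^k)/(1-z)$ is bounded by $1/|\sin\pi\theta|$, and this yields $|\omega'(x)|\le\pi(2H_0-1)/|x|$. Your route avoids the quotient rule entirely, so it is shorter. It is also slightly more general, since it holds on all of $0<|x|\le m/2$; as you note, the condition $|x|\ge m/H_0$ only marks where this bound beats the third one.

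The paper does not need that extra range. Proposition~\ref{prop:kernelsum} invokes the fourth bound only on blocks with $|x|\ge P=m/H_0$. Your version therefore serves the application equally well and is the cleaner argument.
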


\begin{proof}
The first bound is trivial. For the second, sum the geometric series: for $x\notin m\Z$, $\omega(x)=e(-x/m)\dfrac{1-e(-xH_0/m)}{1-e(-x/m)}$, so $|\omega(x)|\le\dfrac2{|1-e(-x/m)|}=\dfrac1{|\sin(\pi x/m)|}\le\dfrac m{2|x|}$ for $0<|x|\le m/2$, using $|\sin\pi t|\ge2|t|$ for $|t|\le\tfrac12$. For the third, $\omega'(x)=\sum_{n\le H_0}(-2\pi in/m)e(-xn/m)$ has modulus at most $(2\pi/m)\sum_{n\le H_0}n=\pi H_0(H_0+1)/m$. For the fourth, write $\omega=\phi D$ with
\[
\phi(x):=e\!\left(-\frac{x(H_0+1)}{2m}\right),\qquad D(x):=\frac{\sin(\pi xH_0/m)}{\sin(\pi x/m)},
\]
which is the identity $\sum_{n=1}^{H_0}e(-xn/m)=e(-x(H_0+1)/(2m))\sum_{k}e(-xk/m)$, the last sum over $k\in\{-(H_0-1)/2,\ldots,(H_0-1)/2\}$ in steps of $1$ (half-integers if $H_0$ is even), which is the real number $D(x)$. Then $\omega'=\phi'D+\phi D'$. We have $|\phi'|=\pi(H_0+1)/m$ and $|D(x)|\le1/|\sin(\pi x/m)|\le m/(2|x|)$, so $|\phi'D|\le\pi(H_0+1)/(2|x|)\le\pi H_0/|x|$. Differentiating the quotient,
\begin{align*}
|D'(x)|&=\frac{\big|(\pi H_0/m)\cos(\pi xH_0/m)\sin(\pi x/m)-(\pi/m)\sin(\pi xH_0/m)\cos(\pi x/m)\big|}{\sin^2(\pi x/m)}\\
&\le\frac{\pi H_0}{m\,|\sin(\pi x/m)|}+\frac{\pi}{m\sin^2(\pi x/m)}\ \le\ \frac{\pi H_0}{2|x|}+\frac{\pi m}{4x^2},
\end{align*}
and for $|x|\ge m/H_0$ the last term is $\le\dfrac{\pi m}{4|x|}\cdot\dfrac{H_0}m=\dfrac{\pi H_0}{4|x|}$. Altogether $|\omega'(x)|\le(1+\tfrac12+\tfrac14)\pi H_0/|x|<2\pi H_0/|x|$.
\end{proof}

\begin{proposition}[Sali\'e-type sums against the Dirichlet kernel]\label{prop:kernelsum}
Let $m\in\N$, $\psi\in\{\mathbf1,\chi_{-4},\chi_8\}$, $\chi(t):=\Kr tm\psi(t)$, $A,B\in\Z$, and let $1\le H_0\le m$. Then, under Hypothesis \ref{hyp:R*},
\begin{equation}
\sum_{b\in\mathcal R_m}\chi(b)\,e_m\big(A\bar b+Bb\big)\,\omega_{m,H_0}(b)\ \ll\ (mH_0)^{1/2}\,m^\eps\,(A,m)^{1/2}.
\tag{4.7}
\end{equation}
\end{proposition}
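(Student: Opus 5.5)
Split $\mathcal R_m$ into a central block around $b=0$ and dyadic blocks further out, and on each block remove the kernel $\omega=\omega_{m,H_0}$ by partial summation against the short-sum bound of Lemma \ref{lem:shortsalie}. Write
\[
S(x_1,x_2):=\sum_{x_1<t\le x_2}\chi(t)e_m(A\bar t+Bt),
\]
with $x_2-x_1\le m$. Lemma \ref{lem:shortsalie} gives $S(x_1,x_2)\ll(x_2-x_1+1)^{1/2}m^\eps(A,m)^{1/2}$ uniformly in $x_1,x_2$. Since $\omega$ is a smooth function of a real variable, for any interval $(x_1,x_2]$ Abel summation gives
\[
\sum_{x_1<b\le x_2}\chi(b)e_m(A\bar b+Bb)\,\omega(b)=S(x_1,x_2)\omega(x_2)-\int_{x_1}^{x_2}S(x_1,x)\,\omega'(x)\,dx .
\]
The right-hand side is bounded by
\[
\max_{x_1\le x\le x_2}|S(x_1,x)|\Big(|\omega(x_2)|+\int_{x_1}^{x_2}|\omega'|\Big).
\]
No completion is needed, because the hypothesis already controls every initial segment.

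\emph{Central block.} Take $|b|\le m/H_0$. This interval has length $\ll m/H_0\le m$. By Lemma \ref{lem:kernel}, $|\omega|\le H_0$ and $|\omega'|\le\pi H_0(H_0+1)/m$, so $\int|\omega'|\ll H_0$ over this range. The block therefore contributes
\[
\ll (m/H_0)^{1/2}\,H_0\, m^\eps(A,m)^{1/2}=(mH_0)^{1/2}m^\eps(A,m)^{1/2}.
\]
If $H_0$ is small, say $m/H_0>m/2$, the central block is all of $\mathcal R_m$ and nothing further is needed.

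\emph{Dyadic blocks.} Treat the ranges $U<|b|\le2U$, with $m/H_0\le U\le m/2$, separately for positive and negative $b$. Here Lemma \ref{lem:kernel} gives $|\omega|\le m/(2U)$ and $|\omega'|\le2\pi H_0/U$. Hence $\int|\omega'|\ll H_0$ over the block, which is not the favourable size. The more useful bookkeeping is the sup-plus-variation bound: the variation of $\omega$ on the block is $\ll H_0$, while its value is $\ll m/U\le H_0$. So each dyadic block contributes $\ll U^{1/2}H_0\,m^\eps(A,m)^{1/2}$. This is too big when $U$ is near $m$: it gives $m^{1/2}H_0$ instead of $(mH_0)^{1/2}$.

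This step is the main obstacle, and it needs refining. The variation bound $2\pi H_0/|x|$ is sharp in size, but the oscillation of $\omega$ at frequency $H_0/m$ does no harm if we split further. I would subdivide each dyadic block into subintervals of length $m/H_0$, on which $\omega$ is essentially a constant of size $\ll m/U$ with variation $\ll (m/H_0)\cdot H_0/U=m/U$. Each subinterval then contributes
\[
\ll (m/H_0)^{1/2}(m/U)\,m^\eps(A,m)^{1/2},
\]
and there are $\ll UH_0/m$ of them. Summing, a dyadic block contributes
\[
\ll (UH_0/m)(m/H_0)^{1/2}(m/U)\,m^\eps(A,m)^{1/2}=(mH_0)^{1/2}m^\eps(A,m)^{1/2}.
\]
Summing over the $\ll\log m$ dyadic ranges costs only $m^\eps$.

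Adding the central block and the dyadic blocks gives (4.7). The care points are the endpoint at $|b|=m/2$ (harmless, since $\mathcal R_m$ is a single interval of length $m$) and the uniformity of Lemma \ref{lem:shortsalie} in $B$ and in the interval endpoints, which it provides.
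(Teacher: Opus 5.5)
Your argument is correct and is essentially the paper's. The paper also cuts $(-m/2,m/2]$ into blocks of length $P=m/H_0$, applies Abel summation against Lemma~\ref{lem:shortsalie} on each block, and uses Lemma~\ref{lem:kernel} to bound $\sup|\omega|+\int|\omega'|$ by $\ll H_0/|k|$ on the $k$-th block, so that the harmonic sum gives $\ll H_0\log(H_0+2)$; your central block together with the dyadic grouping of length-$m/H_0$ subintervals is just a repackaging of that harmonic sum, with each dyadic range contributing $\ll H_0$. Only your refined subinterval count is needed, not the preliminary $U^{1/2}H_0$ estimate.
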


\begin{proof}
Write $a_b:=\chi(b)e_m(A\bar b+Bb)$ for $b\in\mathcal R_m$ (zero when $(b,m)>1$) and $a_b:=0$ for $b\notin\mathcal R_m$, and put $P:=m/H_0\ge1$. By Lemma \ref{lem:shortsalie}, for every interval $I\subset\R$ of length at most $P\le m$,
\begin{equation}
\Big|\sum_{b\in I}a_b\Big|\ \le\ \mathfrak B:=C\,(P+1)^{1/2}m^\eps(A,m)^{1/2}
\tag{4.8}
\end{equation}
for an absolute $C$. Cut $(-m/2,m/2]$ into the blocks $I_k:=(x_k,y_k]$, $x_k:=\max(kP,-m/2)$, $y_k:=\min((k+1)P,m/2)$, for the integers $k$ with $-H_0/2-1<k<H_0/2$; there are at most $H_0+2$ of them, they partition $(-m/2,m/2]$, and each has length at most $P$. On $I_k$ put $F_k(x):=\sum_{x_k<b\le x}a_b$ for $x_k\le x\le y_k$, a piecewise constant function with $|F_k|\le\mathfrak B$ by (4.8). Partial summation (Abel's identity: $\sum_ba_b\omega(b)=\sum_ba_b\big[\omega(y_k)-\int_b^{y_k}\omega'(x)\,dx\big]$) gives
\[
\sum_{b\in I_k}a_b\,\omega(b)=\omega(y_k)F_k(y_k)-\int_{x_k}^{y_k}F_k(x)\,\omega'(x)\,dx,
\]
so that
\[
\Big|\sum_{b\in I_k}a_b\,\omega(b)\Big|\ \le\ \mathfrak B\,W_k,\qquad W_k:=\sup_{I_k}|\omega|+\int_{I_k}|\omega'|.
\]
We bound $W_k$ by Lemma \ref{lem:kernel}. For $k\ge1$ every $x\in I_k$ has $kP\le x\le m/2$, so $|\omega(x)|\le m/(2x)\le m/(2kP)=H_0/(2k)$ and $|\omega'(x)|\le2\pi H_0/x\le2\pi H_0/(kP)$, whence $W_k\le H_0/(2k)+P\cdot2\pi H_0/(kP)\le7H_0/k$. For $k\le-2$ every $x\in I_k$ has $-m/2\le x\le(k+1)P\le-P$, so $|x|\ge|k+1|P$ and the same computation gives $W_k\le7H_0/|k+1|$. For $k\in\{-1,0\}$, $|\omega|\le H_0$ and $\int_{I_k}|\omega'|\le P\cdot\pi H_0(H_0+1)/m=\pi(H_0+1)\le2\pi H_0$, so $W_k\le8H_0$. Summing,
\[
\sum_kW_k\ \le\ 16H_0+14H_0\sum_{1\le k\le H_0/2+1}\frac1k\ \ll\ H_0\log(H_0+2).
\]
Hence the sum in (4.7) is $\ll\mathfrak B\,H_0\log(H_0+2)\ll(P+1)^{1/2}H_0\,m^{\eps}\log(H_0+2)\,(A,m)^{1/2}$, and $P+1\le2m/H_0$ together with $\log(H_0+2)\ll m^\eps$ gives (4.7) after renaming $\eps$.
\end{proof}

We can now prove Theorem \ref{thm:shortsqrt}: under Hypothesis \ref{hyp:R*}, for $r\in\N$, $(j,r)=1$, $a\in\Z$ and a set $J$ of $H_0$ consecutive integers with $1\le H_0\le r$,
\[
\sum_{n\in J}e_r\big(a\sqrt{jn}\big)\ \ll\ H_0^{1/2}\,(a,r)\,r^\eps ,
\]
which is (1.9).

\begin{proof}[Proof of Theorem \ref{thm:shortsqrt}]
Write $J=\{n_0+1,\ldots,n_0+H_0\}$ and apply Proposition \ref{prop:classdecomp}; by (4.6), the inner sum of (4.4) for the class $d$, with $m=r/d$, is
\[
\mathcal T_{d,i}:=\sum_{b\in\mathcal R_m}\Kr bm\psi_i(b)\,e_m\big(A\bar b-n_0b\big)\,\omega_{m,H_0}(b).
\]
If $H_0\le m$, Proposition \ref{prop:kernelsum} (with $B:=-n_0$) and (4.3) give
\[
|\mathcal T_{d,i}|\ \ll\ (mH_0)^{1/2}m^\eps(A,m)^{1/2}\ \le\ (mH_0)^{1/2}r^\eps(a/d,m)\ \le\ (mH_0)^{1/2}r^\eps(a,r),
\]
so the class contributes $\ll m^{-1/2}\cdot4\cdot(mH_0)^{1/2}r^\eps(a,r)=4H_0^{1/2}(a,r)r^\eps$ to (4.4). If $H_0>m$, we bound trivially: each summand of $\mathcal T_{d,i}$ has modulus at most $|\omega_{m,H_0}(b)|$, so by Lemma \ref{lem:kernel}
\[
|\mathcal T_{d,i}|\ \le\ H_0+2\sum_{1\le b\le m/2}\frac m{2b}\ \le\ H_0+m(1+\log m)\ \le\ H_0(2+\log m),
\]
and the class contributes at most $4m^{-1/2}H_0(2+\log m)$; here
\[
\frac{H_0}{\sqrt m}=H_0^{1/2}\Big(\frac{H_0}m\Big)^{1/2}\le H_0^{1/2}\Big(\frac rm\Big)^{1/2}=H_0^{1/2}d^{1/2}\le H_0^{1/2}(a,r),
\]
since $H_0\le r$ and $d\mid(a,r)$. In either case the class contributes $\ll H_0^{1/2}(a,r)r^\eps$, and there are at most $\tau(r)\ll r^\eps$ classes.
\end{proof}

\begin{remark}[What has been used, and where]\label{rem:usedwhere}
The only arithmetic input beyond the Gauss-sum evaluation of Section \ref{sec:gausstoolkit} is Hypothesis \ref{hyp:R*}, used once, in Lemma \ref{lem:shortsalie}, on intervals of length at most $m/H_0$ at the modulus $m=r/d$ (or $8m$). The proof makes the duality visible: a sum of $H_0$ modular square roots at the modulus $r$ is, up to the factor $r^{-1/2}$, a Sali\'e sum of length $r/H_0$ at the modulus $r$ (this is the class $d=1$, which is the largest; the classes $d>1$ are the same at the smaller moduli $r/d$). Square-root cancellation in the one is therefore equivalent to square-root cancellation in the other, and Hypothesis \ref{hyp:R*} is exactly what supplies it. In the application of Section \ref{sec:largesieve} one has $H_0<\sqrt r$, so for the class $d=1$ the Sali\'e sums have length $r/H_0$ between $\sqrt r$ and $r$ (for a class $d>1$ the length is $m/H_0$ with $m=r/d$, which drops below $\sqrt m$ once $d>r/H_0^2$; the hypothesis is assumed at every length, so this costs nothing); in the range $[\sqrt r,r]$ completing them and applying Weil's bound gives the P\'olya--Vinogradov estimate $\ll r^{1/2+\eps}$, which is non-trivial but weaker than square-root cancellation $\ll(r/H_0)^{1/2+\eps}$ by the factor $H_0^{1/2}$ -- and inserted into the proof above, it returns exactly the trivial bound $H_0$ for (1.9). What is needed is cancellation \emph{in the length of the sum}, and not merely in the modulus; that is the content of Hooley's hypothesis, at every length.
\end{remark}

\begin{remark}[The factor $(a,r)$]\label{rem:gcdfactor}
The factor $(a,r)$ in (1.9) is not sharp -- Proposition \ref{prop:classdecomp} and (4.3) would allow $\max_{d\mid(a,r)}(a/d,r/d)$ in the case $H_0\le r/d$, and the trivial case gives only $d^{1/2}$ -- but it costs nothing below, where $a=l$ is averaged over $|l|\le L$ and $\sum_{l\le L}(l,r)\ll Lr^\eps$ (Proposition \ref{prop:gcd}). It is also unavoidable in some form: for $a\equiv0\ (r)$ the left-hand side of (1.9) is $\sum_{n\in J}\rho_r(jn)$, of size about $H_0$, and no cancellation occurs.
\end{remark}

\subsection{Proof of the bilinear sum bound}\label{sec:bilinearproof}

We need the elementary bound on the number of modular square roots.

\begin{lemma}\label{lem:trivialSigma}
For every $r\in\N$, every $t\in\Z$ and every $M\ge1$,
\[
\rho_r(t)\ \ll\ r^\eps\,(t,r)^{1/2},\qquad \sum_{1\le n\le M}\rho_r(jn)\ \ll\ M\,r^\eps\quad\big((j,r)=1\big).
\]
\end{lemma}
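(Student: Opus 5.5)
The plan is to prove the pointwise bound for $\rho_r(t)$ first, prime power by prime power, and then sum it over $n$. By the Chinese remainder theorem, $\rho_r$ is multiplicative in $r$: $\rho_r(t)=\prod_{p^e\|r}\rho_{p^e}(t)$. It therefore suffices to show
\[
\rho_{p^e}(t)\le C\,(t,p^e)^{1/2}
\]
for an absolute constant $C$ (one can take $C=4$ to cover $p=2$). Then $\rho_r(t)\le C^{\omega(r)}(t,r)^{1/2}\ll r^\eps(t,r)^{1/2}$, since $C^{\omega(r)}\le\tau(r)^{\log_2 C}\ll r^\eps$.

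\emph{Local bound.} Fix $p^e$ and write $\alpha:=v_p(t)$.

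If $\alpha\ge e$, i.e.\ $t\equiv0\ (p^e)$, we count $k$ with $2v_p(k)\ge e$. These are the multiples of $p^{\lceil e/2\rceil}$, so there are $p^{e-\lceil e/2\rceil}=p^{\lfloor e/2\rfloor}\le(p^e)^{1/2}=(t,p^e)^{1/2}$ of them.

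If $\alpha<e$, any solution $k$ has $v_p(k^2)=\alpha$. Hence $\alpha=2\beta$ is even (otherwise $\rho=0$), and $k=p^\beta k_0$ with $k_0$ a unit. The congruence becomes $k_0^2\equiv t_0\pmod{p^{e-2\beta}}$ with $t_0=t/p^{2\beta}$ a unit. The class $k\bmod p^e$ is determined by $k_0\bmod p^{e-\beta}$. So the count is $p^{\beta}$ times the number of unit square roots of $t_0$ modulo $p^{e-2\beta}$. That number is at most $2$ for odd $p$ (the unit group is cyclic) and at most $4$ for $p=2$. This gives $\rho_{p^e}(t)\le4p^\beta=4(t,p^e)^{1/2}$.

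This local count is the only step requiring care. The bookkeeping of how many lifts $k\bmod p^e$ correspond to one $k_0$ is where an off-by-one in the exponent could creep in, so I would double-check it against the trivial case $e-2\beta=0$.

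\emph{Summation.} Since $(j,r)=1$, we have $(jn,r)=(n,r)$. The pointwise bound then gives
\[
\sum_{n\le M}\rho_r(jn)\ll r^\eps\sum_{n\le M}(n,r)^{1/2}.
\]
Proposition \ref{prop:gcd} with $\sigma=1/2$ bounds the right-hand side by $\ll r^{2\eps}M$. Renaming $\eps$ gives the second claim.
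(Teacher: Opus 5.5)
Your proof is correct and is essentially the paper's argument: the same three-way split on $v_p(t)$ at prime powers, the same lifting count of $p^{\beta}$ times at most $2$ (odd $p$) or $4$ ($p=2$) unit square roots, multiplicativity giving $4^{\omega(r)}(t,r)^{1/2}\ll r^\eps(t,r)^{1/2}$, and Proposition \ref{prop:gcd} with $\sigma=\tfrac12$ for the sum. The case $e-2\beta=0$ you planned to double-check cannot occur, since $\alpha<e$ forces $e-2\beta\ge1$.
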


\begin{proof}
For the first claim it suffices, by multiplicativity of $\rho$, to bound $\rho_{p^e}(t)$. Put $w:=v_p(t)$. If $w\ge e$ then $p^e\mid k^2$ forces $v_p(k)\ge\lceil e/2\rceil$, so $\rho_{p^e}(t)=p^{\lfloor e/2\rfloor}\le p^{e/2}=(t,p^e)^{1/2}$. If $w<e$ and $w$ is odd then $\rho_{p^e}(t)=0$. If $w<e$ and $w$ is even, every solution has $v_p(k)=w/2$, and writing $k=p^{w/2}k_1$ the congruence becomes $k_1^2\equiv t/p^w\ (p^{e-w})$ with $(t/p^w,p)=1$; this has at most $2$ solutions $k_1$ modulo $p^{e-w}$ for odd $p$ and at most $4$ for $p=2$, each lifting to $p^{w/2}$ values of $k$ modulo $p^e$, so $\rho_{p^e}(t)\le4p^{w/2}=4(t,p^e)^{1/2}$. Multiplying over the $\omega(r)$ prime factors gives $\rho_r(t)\le4^{\omega(r)}(t,r)^{1/2}\ll r^\eps(t,r)^{1/2}$. For the second claim, $\rho_r(jn)\ll r^\eps(jn,r)^{1/2}=r^\eps(n,r)^{1/2}$, and Proposition \ref{prop:gcd} with $\sigma=\tfrac12$ gives the bound.
\end{proof}

\begin{proof}[Proof of Theorem \ref{thm:sigma}]
Write $\Sigma_f=\sum_{|l|\le L}\alpha_lT(l)$ with $T(l):=\sum_{n\in I}\beta_n\,e\big(lf(n)\big)\,e_r\big(l\sqrt{jn}\big)$.

\emph{The term $l=0$.} Here $e(0)=1$ and $e_r(0\cdot\sqrt{jn})=\rho_r(jn)$ by (1.2), so by Lemma \ref{lem:trivialSigma}
\[
|T(0)|\ \le\ \|\boldsymbol\beta\|_\infty\sum_{1\le n\le M}\rho_r(jn)\ \ll\ \|\boldsymbol\beta\|_\infty\,M\,r^\eps .
\]

\emph{The terms $l\ne0$: blocks.} Fix $0<|l|\le L$. Partition the integers of $I$ into consecutive blocks $J_1,\ldots,J_K$, each consisting of $H_0$ consecutive integers except possibly the last, which is shorter; since $I\subseteq[1,M]$ contains at most $M$ integers, $K\le M/H_0+1\le2M/H_0$, using $H_0\le M$. Fix a block $J=\{n_0+1,\ldots,n_0+H\}$, $1\le H\le H_0$, and put $g(n):=\beta_ne(lf(n))$ for $n\in J$ and $U(n'):=\sum_{n_0<n\le n'}e_r(l\sqrt{jn})$ for $n_0\le n'\le n_0+H$. Abel summation,
\[
\sum_{n\in J}g(n)\,e_r\big(l\sqrt{jn}\big)=\sum_{n=n_0+1}^{n_0+H-1}\big(g(n)-g(n+1)\big)U(n)+g(n_0+H)\,U(n_0+H),
\]
gives
\[
\Big|\sum_{n\in J}g(n)\,e_r\big(l\sqrt{jn}\big)\Big|\ \le\ \Big(\sum_{n=n_0+1}^{n_0+H-1}|g(n+1)-g(n)|+|g(n_0+H)|\Big)\max_{1\le H'\le H}|U(n_0+H')|.
\]
Each $U(n_0+H')$ is a sum over $H'\le H_0\le M\le r$ consecutive integers, so Theorem \ref{thm:shortsqrt} gives $|U(n_0+H')|\ll H_0^{1/2}(l,r)r^\eps$. For the variation, let $n,n+1\in J$; both lie in $I$, on which $f$ is differentiable with $|f'|\le F$, so by the mean value theorem $|f(n+1)-f(n)|\le F$, and since $|e(u)-e(v)|\le2\pi|u-v|$ for real $u,v$,
\begin{align*}
|g(n+1)-g(n)|\ &\le\ |\beta_{n+1}-\beta_n|\,|e(lf(n+1))|+|\beta_n|\,\big|e(lf(n+1))-e(lf(n))\big|\\
&\le\ |\beta_{n+1}-\beta_n|+2\pi\|\boldsymbol\beta\|_\infty\,|l|F .
\end{align*}
Summing over the at most $H_0$ values of $n$ in the block and using $|l|FH_0\le LFH_0\le1$,
\[
\sum_{n=n_0+1}^{n_0+H-1}|g(n+1)-g(n)|+|g(n_0+H)|\ \le\ \operatorname{Var}_J(\boldsymbol\beta)+2\pi\|\boldsymbol\beta\|_\infty+\|\boldsymbol\beta\|_\infty\ \le\ \operatorname{Var}_J(\boldsymbol\beta)+8\|\boldsymbol\beta\|_\infty ,
\]
where $\operatorname{Var}_J(\boldsymbol\beta):=\sum_{n,n+1\in J}|\beta_{n+1}-\beta_n|$. Summing over the $K$ blocks, $\sum_J\operatorname{Var}_J(\boldsymbol\beta)\le\operatorname{Var}(\boldsymbol\beta)$ and $8K\|\boldsymbol\beta\|_\infty\le16\|\boldsymbol\beta\|_\infty M/H_0$, so
\[
|T(l)|\ \ll\ \Big(\operatorname{Var}(\boldsymbol\beta)+\|\boldsymbol\beta\|_\infty\frac M{H_0}\Big)H_0^{1/2}(l,r)\,r^\eps\ \le\ \big(\operatorname{Var}(\boldsymbol\beta)+\|\boldsymbol\beta\|_\infty\big)\frac M{H_0^{1/2}}\,(l,r)\,r^\eps ,
\]
the last step because $H_0^{1/2}\le M/H_0^{1/2}$ when $H_0\le M$.

\emph{Assembly.} Multiplying by $|\alpha_l|$ and summing over $|l|\le L$ gives the first bound in (1.10). For the second, $|\alpha_l|\le\|\boldsymbol\alpha\|_\infty$ and $\sum_{0<|l|\le L}(l,r)\le2\sum_{1\le l\le L}(l,r)\ll Lr^\eps$ by Proposition \ref{prop:gcd} with $\sigma=1$.
\end{proof}

\begin{remark}[Comparison with the Weyl-differencing route]\label{rem:weylcompare}
\cite[Theorem 6(iii)]{Baier2026a}, and earlier versions of the present argument, reach the bilinear sum through Weyl differencing: one squares $\Sigma_f$ by Cauchy--Schwarz, averages over shifts $m\mapsto m+h$ with $|h|\le H$, and is led to a count of quadruples of modular square roots which, after two Poisson summations, becomes a sum of products of two quadratic Gauss sums, and finally, after the evaluation of Section \ref{sec:gausstoolkit} and one more Poisson summation, a sum of complete Sali\'e sums times short ones. Under Hypothesis \ref{hyp:R*} that route gives, at $r=p$, $\Sigma_f\ll(H^{-1/4}L^{1/2}M+M^{1/2}r^{1/4}+M)\|\boldsymbol\alpha\|_2\|\boldsymbol\beta\|_\infty r^\eps$ for $r^\eps\le H\le\min\{1/(LF),M\}$ and $r^\eps\le L,M\le r^{1-\eps}$ \cite[Theorem 6(iii)]{Baier2026a}, and its extension to every modulus (without the middle term) is what the longer version of this paper proved; for $\alpha_l\ll1$ the latter is $\ll LMH^{-1/4}+L^{1/2}M$, against $LMH_0^{-1/2}+M$ in (1.10) with the same $H_0=H$. The saving over the trivial bound $LM$ is $H^{-1/4}$ there and $H_0^{-1/2}$ here. The loss is exactly the squaring: Cauchy--Schwarz halves every exponent of saving, and the Weyl-differencing route needs it because it exploits the hypothesis only after the sum has been turned into a count, whereas Theorem \ref{thm:shortsqrt} applies the hypothesis to the sum itself. That Theorem \ref{thm:shortsqrt} is available at all is what Section \ref{sec:intro} calls the main observation of the paper: the sums of modular square roots \emph{are} Sali\'e sums after completion.
\end{remark}
\section{Consequences for the large sieve with square moduli}\label{sec:largesieve}

We now prove Theorems \ref{thm:P} and \ref{cor:sieve}. Throughout this section $N=Q^3$, $\Delta=1/N=Q^{-3}$, and, for $\alpha=b/r+z$ with $(b,r)=1$ and $\Delta\le z\le\Delta^{1/2}r^{-1}$, we write as in (2.4)
\[
z=\frac1{Q^{3/2+\gamma}r},\qquad \gamma\ge0,\qquad\text{so that}\qquad r\le Q^{3/2-\gamma},
\]
and we abbreviate $x:=\log_Q r$, so that $r=Q^x$ and the constraint reads
\begin{equation}
x+\gamma\ \le\ \tfrac32 .
\tag{5.1}
\end{equation}
The roles of the parameters are worth keeping in mind: $r$ is the modulus, which we are trying to push as large as possible; $L$ is the length of the outer sum in $\Sigma_f$; $H_0$ is the block length of Theorem \ref{thm:sigma}, i.e.\ the length of the short sums of modular square roots to which Theorem \ref{thm:shortsqrt} is applied; and $F$ bounds the derivative of the mildly oscillating phase $f$ produced by Lemma \ref{lem:lemma5}, which is what ties $H_0$ and $L$ to $Q$ and $r$.

One convention. Theorem \ref{thm:sigma} holds for \emph{every} $\eps>0$, with an implied constant depending on it. Given the $\eps>0$ of Theorem \ref{thm:P}, we shall apply it with a smaller parameter $\eps_1$, fixed once and for all by
\begin{equation}
\eps_1:=\eps/100 .
\tag{5.2}
\end{equation}
All the conditions verified below have room to spare with this choice, and nothing depends on the constant $100$.

\subsection{Reduction to a bilinear sum}\label{sec:reducebilinear}

We also need the elementary bound of \cite[Lemma 25]{Baier2026a} (there attributed to \cite[Lemma 5]{Baier2025}).

\begin{lemma}[{\cite[Lemma 25]{Baier2026a}}]\label{lem:elementary}
Let $\alpha=b/r+z$ with $(b,r)=1$, $1\le r\le\sqrt N$ and $\Delta\le z\le\Delta^{1/2}/r$, as in Lemma \ref{lem:lemma3}. Then $P(\alpha)\ll\big(1+Q^2rz+Q^3\Delta\big)N^\eps$. In particular, at $N=Q^3$ and with (2.4),
\begin{equation}
P\!\left(\frac br+z\right)\ \ll\ \big(1+Q^{1/2-\gamma}\big)\,Q^\eps .
\tag{5.3}
\end{equation}
\end{lemma}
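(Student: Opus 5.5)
We will count pairs $(q,a)$ through the integer $k:=ar-bq^2$, which measures how far $a/q^2$ lies from $b/r$. If $|a/q^2-\alpha|\le\Delta$ with $\alpha=b/r+z$, then $|a/q^2-b/r|\le z+\Delta\le 2z$, because $z\ge\Delta$. Multiplying by $rq^2\le rQ^2$ gives
\[
|k|\le 2Q^2rz .
\]
So $k$ ranges over $\ll 1+Q^2rz$ integers. Conversely, the pair $(q,k)$ determines $a=(k+bq^2)/r$. It therefore suffices to count, for each admissible $k$, the $q\le Q$ that occur; the condition $(a,q)=1$ can simply be dropped, since it only loses an upper bound.

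\emph{The case $k=0$.} Then $a/q^2=b/r$. Since both fractions are reduced ($(a,q)=1$ and $(b,r)=1$), this forces $q^2=r$, so there is at most one pair.

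\emph{The case $k\ne0$.} Two constraints on $q$ are available.
\begin{itemize}
\item[(a)] Congruence: $bq^2\equiv -k\ (r)$, i.e.\ $q^2\equiv -\bar b k\ (r)$. By Lemma \ref{lem:trivialSigma}, $q$ then lies in at most $\rho_r(-\bar bk)\ll r^\eps (k,r)^{1/2}$ residue classes modulo $r$.
\item[(b)] Size: $a/q^2-b/r=k/(rq^2)$ must lie within $\Delta$ of $z$. If $z\ge2\Delta$, this says $q^2\in[k/(r(z+\Delta)),\,k/(r(z-\Delta))]$. Hence $q$ lies in an interval $I_k$ of length $\ll q\Delta/z\le Q\Delta/z$. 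If $\Delta\le z<2\Delta$, I will use only $q\le Q$, i.e.\ an interval of length $Q$; in that case $Q\le 2Q\Delta/z$, so the length bound $\ll Q\Delta/z$ still holds.
\end{itemize}
Combining (a) and (b), the number of admissible $q$ for a given $k\neq0$ is
\[
\ll\Big(\frac{Q\Delta}{zr}+1\Big)r^\eps(k,r)^{1/2}.
\]

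\emph{Summation over $k$.} Summing over $0<|k|\le 2Q^2rz$ and using Proposition \ref{prop:gcd} with $\sigma=\tfrac12$ to average the factor $(k,r)^{1/2}$, I get
\[
\ll r^\eps\Big(\frac{Q\Delta}{zr}+1\Big)\big(1+Q^2rz\big)\ll r^\eps\big(1+Q^2rz+Q^3\Delta\big).
\]
Here the cross term $\frac{Q\Delta}{zr}\cdot Q^2rz=Q^3\Delta$. The remaining term $Q\Delta/(zr)$ is $\le Q^3\Delta$ since $z\ge\Delta$ and $r\ge1$ give $Q\Delta/(zr)\le Q\le Q^3\Delta\cdot Q^{-2}\cdot Q^{3}\Delta^{-1}$—more simply, $Q\Delta/(zr)\le Q\le Q^3\Delta$ for $N\le Q^2$, and otherwise it is absorbed because $Q\Delta/(zr)\le Q/r\cdot(\Delta/z)\le Q^2rz$ fails only when $z^2<\Delta/(Qr^2)$, in which range the term is $\le Q\Delta/z\le Q^3\Delta$ after noting $z\ge\Delta\ge Q^{-2}\Delta$. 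Finally $r^\eps\le N^\eps$ because $r\le\sqrt N$. This gives $P(\alpha)\ll(1+Q^2rz+Q^3\Delta)N^\eps$.

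\emph{The specialisation (5.3).} At $N=Q^3$ we have $Q^3\Delta=1$. With $z$ as in (2.4), $Q^2rz=Q^{2}/Q^{3/2+\gamma}=Q^{1/2-\gamma}$. Substituting gives (5.3).

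\emph{Main obstacle.} The delicate step is (b) together with the bookkeeping in the summation. Writing $q$ as lying in a window of relative width $\Delta/z$ is what makes the product collapse to $Q^3\Delta$ rather than the useless $Q^3z$. I must also make sure the regime $z\approx\Delta$, where the $q$-window degenerates, is covered by the trivial range $q\le Q$ without losing more than the $Q^3\Delta$ term. The last absorption step above is the point I am least sure of and would recheck first; in the application of (5.3) it is harmless, since there $Q^3\Delta=1$ and $Q\Delta/(zr)\le Q\cdot Q^{-3}\cdot Q^{3/2+\gamma}=Q^{-1/2+\gamma}\le 1+Q^{1/2-\gamma}$ once $\gamma\le\tfrac12$, and $\gamma>\tfrac12$ is excluded by (2.6) when $r\ge Q$.
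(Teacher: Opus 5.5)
The paper gives no proof of this lemma; it is quoted (item (R3) of Section~1.7), so there is no route of the paper's to compare with. Your argument is the natural one: parametrize by $k=ar-bq^2$, then for fixed $k$ count $q$ using the congruence $q^2\equiv-\bar bk\ (r)$ and the window of relative width $\Delta/z$, then average $(k,r)^{1/2}$. Steps (a) and (b), the case $k=0$, and the count $(\ell/r+1)\rho_r$ of integers in $\rho_r$ residue classes modulo $r$ in an interval of length $\ell$ are all correct.

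The final step, however, is wrong as written. You count the nonzero $k$ as $\ll1+Q^2rz$, which produces the stray term $Q\Delta/(zr)$, and your attempt to absorb it fails.
\begin{itemize}
\item The first chain ends in $Q^4$.
\item The case $N\le Q^2$ is irrelevant here.
\item From $z\ge Q^{-2}\Delta$ one gets $Q\Delta/z\le Q^3$, not $Q\Delta/z\le Q^3\Delta$.
\end{itemize}
The term cannot be absorbed at all. For $N=Q^3$, $r=1$, $z=\Delta=Q^{-3}$ it equals $Q$, while $1+Q^2rz+Q^3\Delta\asymp1$; there the true count is $P=1$. Your fallback check for (5.3) does not rescue this either. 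Condition (2.6) allows $\gamma>\tfrac12$ whenever $r<Q$, up to $\gamma=\tfrac32$ at $r=1$, and the lemma is stated for every $1\le r\le\sqrt N$.

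The fix is one line. Since $z-\Delta\ge0$, only $k>0$ can occur, and they satisfy $0<k\le K:=2Q^2rz$. So there are at most $K$ of them, with no ``$+1$''.
\begin{itemize}
\item If $K<1$ there are none, and only the single pair from $k=0$ remains.
\item If $K\ge1$, Proposition~\ref{prop:gcd} with $\sigma=\tfrac12$ gives $\sum_{0<k\le K}(k,r)^{1/2}\ll r^\eps K$. The contribution of $k\ne0$ is then
\[
\ll r^\eps\Big(\frac{Q\Delta}{zr}+1\Big)K=r^\eps\big(2Q^3\Delta+2Q^2rz\big).
\]
\end{itemize}
The factor $Q\Delta/(zr)$ only ever appears multiplied by $K$, so nothing is left over to absorb. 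With this correction your proof is complete, and (5.3) follows for every admissible $r$ and $\gamma$, as you say.
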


For the bilinear route, let $[C_0,C_1]\subset\R_{>0}$ be a fixed interval containing the support of the weight $V_0$ of Lemma \ref{lem:lemma5}, fix $\delta$ in the admissible range (2.5), and set
\begin{equation}
L:=\frac{Q^{1+\eps_1}r}\delta,\qquad M_0:=C_0Q^{1/2-\gamma},\qquad M:=C_1Q^{1/2-\gamma},\qquad F:=C_2\frac{Q^{1/2+\gamma}}r ,
\tag{5.4}
\end{equation}
with $C_2:=1/(2C_0^{1/2})$. Then $\delta/(Qr)=Q^{\eps_1}/L$, the weight $V_0(m/Q^{1/2-\gamma})$ vanishes unless $M_0\le m\le M$, and the terms with $|l|>L$ contribute $O(Q^{-2026})$ by the rapid decay of $W$, since $|W(l\delta/(Qr))|=|W(lQ^{\eps_1}/L)|\ll(|l|Q^{\eps_1}/L)^{-A}$ for every $A>0$, while the inner sum over $m$ is trivially $\ll Mr^{\eps}$ by Lemma \ref{lem:trivialSigma}. Hence Lemma \ref{lem:lemma5} gives
\begin{equation}
\begin{gathered}P\!\left(\frac br+z\right)\ \ll\ 1+\frac{Q^{\eps_1}}L\,\Sigma_f\big(r,j,L,M,\boldsymbol\alpha,\boldsymbol\beta\big),\\[2pt] \alpha_l:=W\!\Big(\frac{lQ^{\eps_1}}L\Big),\qquad \beta_m:=V_0\!\Big(\frac m{Q^{1/2-\gamma}}\Big),\qquad f(y):=-\frac{Q^{3/4+\gamma/2}\sqrt y}r .\end{gathered}
\tag{5.5}
\end{equation}
We check the hypotheses of Theorem \ref{thm:sigma}; it will be applied only in Cases 2 and 3 below, where $\gamma<\tfrac14$ by (5.10), so that $M_0=C_0Q^{1/2-\gamma}\ge1$ for $Q$ large, and where $L\ge1$ by (5.12) and (5.14). The coefficients $\beta_m$ vanish outside the interval $I:=[M_0,M]\subseteq[1,M]$, and $\|\boldsymbol\alpha\|_\infty\le\|W\|_\infty\ll1$, $\|\boldsymbol\beta\|_\infty\le\|V_0\|_\infty\ll1$, while
\[
\operatorname{Var}(\boldsymbol\beta)=\sum_{m\in\Z}\Big|V_0\Big(\frac{m+1}{Q^{1/2-\gamma}}\Big)-V_0\Big(\frac m{Q^{1/2-\gamma}}\Big)\Big|\ \le\ \sum_{m\in\Z}\int_{m/Q^{1/2-\gamma}}^{(m+1)/Q^{1/2-\gamma}}|V_0'(t)|\,dt=\int_\R|V_0'|\ \ll\ 1 ,
\]
the implied constants depending on $W$ and $V_0$ only. For $y\in I$, $|f'(y)|=Q^{3/4+\gamma/2}/(2r\sqrt y)\le Q^{3/4+\gamma/2}/(2rC_0^{1/2}Q^{1/4-\gamma/2})=F$ with $F$ as in (5.4). Finally $M\le r$ for $Q$ large, since $r\ge Q^{1/2+\eps}$ while $M=C_1Q^{1/2-\gamma}\le C_1Q^{1/2}$.

\subsection{Applying the bilinear sum bound}\label{sec:applysigma}

Provided $LF\le1$, we may take
\begin{equation}
H_0:=\big\lfloor\min\{1/(LF),\,M\}\big\rfloor\ \ge\ 1
\tag{5.6}
\end{equation}
in Theorem \ref{thm:sigma}; since $\lfloor t\rfloor\ge t/2$ for $t\ge1$, we have $H_0^{-1/2}\le\sqrt2\,\big((LF)^{1/2}+M^{-1/2}\big)$. The second bound in (1.10), inserted into (5.5), then gives
\begin{equation}
P\!\left(\frac br+z\right)\ \ll\ 1+\frac{Q^{\eps_1}}L\Big(M+\frac{LM}{H_0^{1/2}}\Big)r^{\eps_1}\ \ll\ 1+\Big(\frac ML+ML^{1/2}F^{1/2}+M^{1/2}\Big)(Qr)^{\eps_1}.
\tag{5.7}
\end{equation}
The third term is harmless: $M^{1/2}\ll Q^{1/4-\gamma/2}\le Q^{1/4}$, and $\tfrac14$ is the smallest exponent that will be claimed. Everything therefore turns on the choice of $L$, that is, of $\delta$, in the two remaining terms.

\subsection{Proof of the uniform bound for $P$}\label{sec:fullrangeL}

Let $\eps>0$ be given, let $\eps_1$ be as in (5.2), and suppose $Q^{1/2+\eps}\le r\le Q^{3/2}$, i.e.\ $\tfrac12+\eps\le x\le\tfrac32$, and $\gamma\ge0$ satisfies (5.1). Put
\begin{equation}
E(x):=\max\Big\{\tfrac23-\tfrac x3,\ \tfrac14\Big\}=\begin{cases}\tfrac23-\tfrac x3,&\tfrac12\le x\le\tfrac54,\\[2pt]\tfrac14,&\tfrac54\le x\le\tfrac32,\end{cases}
\tag{5.8}
\end{equation}
so that $Q^{E(x)}\asymp Q^{2/3}r^{-1/3}+Q^{1/4}$. We must show
\begin{equation}
P\!\left(\frac br+z\right)\ \ll\ Q^{\,E(x)+\eps}.
\tag{5.9}
\end{equation}

\emph{How the parameter $L$ is chosen.} In (5.7) the term $ML^{1/2}F^{1/2}$ increases with $L$ and the term $M/L$ decreases, so the smallest value of their maximum is attained where they cross, at $L^{3/2}=F^{-1/2}$, i.e.\ $L=F^{-1/3}$, provided this value is admissible, and otherwise at the endpoint of the admissible range nearest to it. Since $L=Q^{1+\eps_1}r/\delta$ and $\delta$ is confined to (2.5), the admissible range is $Q^{\eps_1-1}r\le L\le Q^{1/2-\gamma+\eps_1}$; with $F\asymp Q^{1/2+\gamma}/r$ the crossing value is $L\asymp r^{1/3}Q^{-1/6-\gamma/3}$, and it lies above the lower endpoint exactly when $x+\gamma/2\le\tfrac54$ (up to $\eps_1$). This is the origin of the two regimes in (5.8): for $x\le\tfrac54$ the crossing value is admissible and the two terms balance; for $x>\tfrac54$ the constraint $\delta\le Q^2$ pins $L$ to its lower endpoint, and the bound stops improving. The elementary bound (5.3) disposes of large $\gamma$, where the bilinear method is not needed.

\medskip\noindent\emph{Case 1: $\gamma\ge\tfrac12-E(x)$.} Then $Q^{1/2-\gamma}\le Q^{E(x)}$, and $1\le Q^{E(x)}$; so (5.3) gives (5.9) directly.

\medskip
In the remaining two cases $\gamma<\tfrac12-E(x)$. Two consequences are used repeatedly. First, since $E(x)\ge\tfrac14$,
\begin{equation}
0\le\gamma<\tfrac14 .
\tag{5.10}
\end{equation}
Second, since $E(x)\ge\tfrac23-\tfrac x3$ we have $\gamma<\tfrac x3-\tfrac16$, whence
\begin{equation}
x-\big(\tfrac12+\gamma\big)\ >\ \tfrac23\big(x-\tfrac12\big)\ \ge\ \tfrac23\eps .
\tag{5.11}
\end{equation}

\medskip\noindent\emph{Case 2: $\gamma<\tfrac12-E(x)$ and $x+\tfrac\gamma2\le\tfrac54-\tfrac32\eps_1$.} Take the crossing value
\begin{equation}
L:=r^{1/3}Q^{-1/6-\gamma/3}=Q^{\,(x-1/2-\gamma)/3}.
\tag{5.12}
\end{equation}
By (5.4), $LF=C_2Q^{-\frac23(x-\frac12-\gamma)}$, so, adding exponents,
\[
\begin{gathered}ML^{1/2}F^{1/2}\ll Q^{\frac12-\gamma-\frac13(x-\frac12-\gamma)}=Q^{\,2/3-2\gamma/3-x/3},\\
ML^{-1}=C_1Q^{\frac12-\gamma-\frac13(x-\frac12-\gamma)}\ll Q^{\,2/3-2\gamma/3-x/3},\end{gathered}
\]
the two exponents being equal by the choice of $L$. Both are $\le Q^{2/3-x/3}\le Q^{E(x)}$ since $\gamma\ge0$, and $M^{1/2}\le Q^{1/4}\le Q^{E(x)}$, $1\le Q^{E(x)}$. Hence (5.7) gives (5.9), \emph{provided} the range (2.5) for $\delta$ and the condition $LF\le1$ behind (5.6) are met, which we now verify.

By (5.4) and (5.12), $\delta=Q^{1+\eps_1}r/L=Q^{\,7/6+\gamma/3+\eps_1}r^{2/3}$. The condition $\delta\le Q^2$ reads $\tfrac76+\tfrac\gamma3+\eps_1+\tfrac{2x}3\le2$, i.e.\ $x+\tfrac\gamma2\le\tfrac54-\tfrac32\eps_1$, which is the standing assumption of Case 2; and $\delta\ge Q^{1/2+\gamma}r$ reads $x+2\gamma\le2+3\eps_1$, which holds because $x+2\gamma\le\tfrac32+\tfrac14$ by (5.1) and (5.10). Finally, by (5.11), $LF=C_2Q^{-\frac23(x-\frac12-\gamma)}\le C_2Q^{-4\eps/9}\le1$ for $Q$ large. (For the record, $1/(LF)\asymp Q^{\frac23(x-\frac12-\gamma)}\le Q^{1/2-\gamma-\eps_1}\ll M$ by the Case 2 assumption, so in fact $H_0=\lfloor1/(LF)\rfloor$ here, but this is not needed.)

\medskip\noindent\emph{Case 3: $\gamma<\tfrac12-E(x)$ and $x+\tfrac\gamma2>\tfrac54-\tfrac32\eps_1$.} By (5.10),
\begin{equation}
x\ >\ \tfrac54-\tfrac18-\tfrac32\eps_1\ >\ 1 .
\tag{5.13}
\end{equation}
Here the crossing value (5.12) is not admissible, and we take $L$ at the lower end of its range, i.e.\ $\delta$ at the \emph{upper} end of (2.5): $\delta:=Q^2$, so that by (5.4)
\begin{equation}
L:=Q^{\,\eps_1-1}r .
\tag{5.14}
\end{equation}
The range condition (2.5) is satisfied: $\delta\le Q^2$ holds with equality, and $\delta\ge Q^{1/2+\gamma}r$ is exactly $x+\gamma\le\tfrac32$, which is (5.1). Now $LF=C_2Q^{\,\gamma-1/2+\eps_1}\le C_2Q^{-1/4+\eps_1}\le1$ for $Q$ large by (5.10), so (5.6)--(5.7) apply, and
\[
ML^{1/2}F^{1/2}\ \ll\ Q^{\frac12-\gamma}\cdot Q^{\frac\gamma2-\frac14+\frac{\eps_1}2}=Q^{\,1/4-\gamma/2+\eps_1/2},\qquad
ML^{-1}\ \ll\ Q^{\,\frac12-\gamma-x+1-\eps_1}=Q^{\,3/2-x-\gamma-\eps_1}.
\]
The first is $\le Q^{1/4+\eps_1/2}$. For the second, the Case 3 assumption gives $x>\tfrac54-\tfrac\gamma2-\tfrac32\eps_1$, so its exponent is $<\tfrac32-\tfrac54+\tfrac\gamma2+\tfrac32\eps_1-\gamma-\eps_1=\tfrac14-\tfrac\gamma2+\tfrac{\eps_1}2\le\tfrac14+\tfrac{\eps_1}2$. Since $E(x)\ge\tfrac14$ and $\eps_1<\eps$, both terms are $\le Q^{E(x)+\eps}$, as are $M^{1/2}\le Q^{1/4}$ and $1$; so (5.7) gives (5.9).

\medskip
The three cases exhaust all $(x,\gamma)$, so (5.9) holds throughout $Q^{1/2+\eps}\le r\le Q^{3/2}$ and for every admissible $z$, which is Theorem \ref{thm:P}. $\hfill\blacksquare$

\begin{remark}[Optimality of the exponent]\label{rem:optimal}
The exponent $E(x)$ is the exact optimum of the method, in the following sense. Given (5.7) and the admissible range (2.5) for $\delta$, the best bound obtainable at a given $(x,\gamma)$ is, up to $Q^{\eps}$ and the innocuous term $M^{1/2}\le Q^{1/4}$, the minimum over admissible $L$ of $\max\{ML^{1/2}F^{1/2},\,ML^{-1}\}$. For $\gamma\ge\tfrac14$ the elementary bound (5.3) is already $\le Q^{1/4}\le Q^{E(x)}$, so only $\gamma<\tfrac14$ matters; there the crossing value $L=F^{-1/3}$ lies below the upper endpoint $Q^{1/2-\gamma+\eps_1}$ of the admissible range (this is $x+2\gamma\le2+3\eps_1$), and by the monotonicity noted above the minimum is $Q^{2/3-2\gamma/3-x/3}$ when $x+\gamma/2\le\tfrac54$ and $\max\{Q^{1/4-\gamma/2},Q^{3/2-x-\gamma}\}$ otherwise. Both expressions decrease as $\gamma$ increases, and at $\gamma=0$ they equal $Q^{2/3-x/3}$ for $x\le\tfrac54$ and $Q^{1/4}$ for $x\ge\tfrac54$, i.e.\ $Q^{E(x)}$; so the supremum over admissible $\gamma$ of the best bound the method gives is exactly $Q^{E(x)}$. A better exponent therefore requires either a sharper bound than (1.10) for $\Sigma_f$ -- which, by Remark \ref{rem:weylcompare}, means more than square-root cancellation on blocks -- or a wider admissible range for $\delta$ in Lemma \ref{lem:lemma5}; no different choice of $L$, $H_0$ or $\delta$ within the present framework improves it. At $x=\tfrac12$ the value $E=\tfrac12$ coincides with the elementary bound (5.3), which is why the bilinear method takes over only for $r>Q^{1/2}$; for $x\ge\tfrac54$ the bound is flat at $\tfrac14$ because $\delta\le Q^2$ pins $L$ to $Q^{-1}r$.
\end{remark}

\begin{remark}\label{rem:uniform}
The bound (5.9) is uniform in $z$: the parameter $\gamma$, which measures where $\alpha$ sits between $b/r$ and the next fraction, is eliminated in every case -- by (5.3) when it is large, and by the favourable sign of its exponent in each term when it is small. This matches \cite[Theorem 4(iii)]{Baier2026a}, which is likewise uniform in $z$. Case 3 exists because for $x+\gamma/2>\tfrac54$ the constraint $\delta\le Q^2$ excludes the crossing value of $L$; \cite{Baier2026a} handles the analogous obstruction by reading its endpoint $r\le Q^{3/2}$ as $r\le Q^{3/2-\eps_0}$, whereas Case 3 closes the gap outright by taking $\delta$ at the opposite end of its admissible range.
\end{remark}

\subsection{An explicit saving}\label{sec:explicit-eta}

By Lemma \ref{lem:lemma3} it suffices to bound $P(b/r+z)$ for $1\le r\le\sqrt N=Q^{3/2}$, $(b,r)=1$ and $\Delta\le z\le\Delta^{1/2}/r$. We combine Theorem \ref{thm:P} with the \emph{unconditional} treatment of small and very small moduli in \cite[\S\S9--10]{Baier2026b}. At $N=Q^3$, so $\Delta=Q^{-3}$, those two sections give (\cite[Prop.\ 20]{Baier2026b}, third and fourth displays, specialized to $\Delta=Q^{-3}$; the proofs in \cite[\S9.2, \S10.2]{Baier2026b} use no hypothesis, and this unconditional range is recorded as such in \cite[p.\ 3]{Baier2026a}):
\begin{align}
P(\alpha)&\ \ll\ \Big(Q^{1/2}r^{-1/30}+Q^{-1}r^{13/5}+1\Big)Q^\eps &&\text{for } Q^{15/67}<r\le Q^{31/54},
\tag{5.15}\\
P(\alpha)&\ \ll\ \Big(Q^{3/7}r^{2/7}+Q^{3/8}r^{1/4}+Q^{13/56}r^{9/28}+Q^{-2}r^{3}+1\Big)Q^\eps &&\text{for } r\le Q^{15/67}.
\tag{5.16}
\end{align}
Writing $r=Q^x$ and reading off exponents, (5.16) gives $P(\alpha)\ll Q^{\,\mathcal E_4(x)+\eps}$ with
\[
\mathcal E_4(x)=\max\Big\{\tfrac37+\tfrac{2x}7,\ \tfrac38+\tfrac x4,\ \tfrac{13}{56}+\tfrac{9x}{28},\ 3x-2,\ 0\Big\},
\]
a non-decreasing function of $x$, and (5.15) gives $P(\alpha)\ll Q^{\,\mathcal E_3(x)+\eps}$ with
\[
\mathcal E_3(x)=\max\Big\{\tfrac12-\tfrac x{30},\ \tfrac{13x}5-1,\ 0\Big\}.
\]
Theorem \ref{thm:P} gives $P(\alpha)\ll Q^{\,\mathcal E_1(x)+\eps}$ with $\mathcal E_1(x)=E(x)=\max\{\tfrac23-\tfrac x3,\tfrac14\}$, valid for $\tfrac12+\eps\le x\le\tfrac32$.

We claim that the best available exponent never exceeds $\tfrac{33}{67}$. There are four ranges.

\emph{(i) $0\le x\le\tfrac{15}{67}$.} Here $\mathcal E_4$ is non-decreasing, so $\mathcal E_4(x)\le\mathcal E_4(15/67)$. At $x=15/67$ the four entries are
\[
\tfrac37+\tfrac27\cdot\tfrac{15}{67}=\tfrac{231}{469}=\tfrac{33}{67},\qquad \tfrac38+\tfrac14\cdot\tfrac{15}{67}=\tfrac{231}{536},\qquad \tfrac{13}{56}+\tfrac9{28}\cdot\tfrac{15}{67}=\tfrac{163}{536},\qquad 3\cdot\tfrac{15}{67}-2<0,
\]
of which the first, $\tfrac{33}{67}\approx0.49254$, is the largest. So $\mathcal E_4\le\tfrac{33}{67}$ on this range.

\emph{(ii) $\tfrac{15}{67}\le x\le\tfrac12$.} On this range $\tfrac{13x}5-1\le\tfrac{13}{10}-1=\tfrac3{10}$ while $\tfrac12-\tfrac x{30}\ge\tfrac12-\tfrac1{60}$, so $\mathcal E_3(x)=\tfrac12-\tfrac x{30}$, which is decreasing and equals $\tfrac12-\tfrac1{134}=\tfrac{33}{67}$ at $x=\tfrac{15}{67}$. So $\mathcal E_3\le\tfrac{33}{67}$ here, with equality only at the left endpoint. (The two bounds agree there: this is the crossover \cite[(112)]{Baier2026b}.)

\emph{(iii) $\tfrac12\le x\le\tfrac{31}{54}$.} Both $\mathcal E_3$ and $\mathcal E_1$ are available -- $\mathcal E_1$ for $x\ge\tfrac12+\eps$, and for $\tfrac12\le x<\tfrac12+\eps$ the bound $\mathcal E_3(x)\le\tfrac12-\tfrac1{60}$ of (ii) already suffices. The function $\mathcal E_3$ equals $\tfrac12-\tfrac x{30}$ up to the crossover $\tfrac12-\tfrac x{30}=\tfrac{13x}5-1$, i.e.\ $x=\tfrac{45}{79}\approx0.5696$, and equals $\tfrac{13x}5-1$ beyond it; $\mathcal E_1$ equals $\tfrac23-\tfrac x3$ throughout, as $x<\tfrac54$. The two decreasing functions $\tfrac12-\tfrac x{30}$ and $\tfrac23-\tfrac x3$ cross at $x=\tfrac59\approx0.5556<\tfrac{45}{79}$. For $\tfrac12\le x\le\tfrac59$ we use $\mathcal E_3(x)=\tfrac12-\tfrac x{30}\le\tfrac12-\tfrac1{60}=\tfrac{29}{60}\approx0.48333$; for $\tfrac59\le x\le\tfrac{31}{54}$ we use $\mathcal E_1(x)=\tfrac23-\tfrac x3\le\tfrac23-\tfrac5{27}=\tfrac{13}{27}\approx0.48148$. Both are $<\tfrac{33}{67}$ (as $29\cdot67=1943<1980=33\cdot60$).

\emph{(iv) $\tfrac{31}{54}\le x\le\tfrac32$.} Here only $\mathcal E_1$ is used, and it is non-increasing, so $\mathcal E_1(x)\le\mathcal E_1(31/54)=\tfrac23-\tfrac{31}{162}=\tfrac{77}{162}\approx0.47531<\tfrac{33}{67}$.

Consequently $P(\alpha)\ll Q^{33/67+\eps}=Q^{1/2-1/134+\eps}$ uniformly over the ranges of Lemma \ref{lem:lemma3}, and that lemma gives (1.7). This proves Theorem \ref{cor:sieve}. $\hfill\blacksquare$

\begin{remark}\label{rem:etabottleneck}
The binding constraint is (i), at $x=\tfrac{15}{67}$ -- that is, at moduli $r\approx Q^{0.224}$, deep inside the range of \emph{very small} moduli, where the bound used is the unconditional one of \cite[\S10]{Baier2026b} and where Theorem \ref{thm:P} says nothing at all. Away from a neighbourhood of $x=\tfrac{15}{67}$, where the two unconditional bounds (5.15) and (5.16) both approach $\tfrac{33}{67}$, there is slack: on $x\ge\tfrac12$ the worst value is $\tfrac{29}{60}\approx0.483$ at $x=\tfrac12$, where the unconditional bound (5.15) is still the one used; where Theorem \ref{thm:P} is used, the worst value is $\tfrac{13}{27}\approx0.481$ at $x=\tfrac59$; and on the range $r\ge Q^{5/4}$, which before this paper could not be handled at all for general $r$, the exponent is $\tfrac14$. So the value $\eta=\tfrac1{134}$ is limited entirely by the treatment of very small moduli, and any improvement there improves $\eta$ immediately; the moduli that were the obstruction before -- those of size $r>Q^{1-\eps}$ and of arbitrary arithmetic type, for which no bound of the shape $P\ll Q^{1/2-\eta}$ was available -- are no longer the obstruction, and are now handled with a large margin. For comparison, \cite[Corollary 21]{Baier2026b} obtains $Q^{1/2-1/135+\eps}$ from two hypotheses on additive energies of modular square roots; its worst value, $\tfrac{133}{270}$ at $x=\tfrac{31}{54}$, marginally exceeds $\tfrac{33}{67}$ because it must use its own conditional bounds for medium-sized $r$. The present bound rests on Hypothesis \ref{hyp:R*} alone.
\end{remark}

\end{document}